\numberwithin{equation}{section}
\numberwithin{equation}{section}
\tikzstyle{black dot}=[fill=black, draw=black, shape=circle, minimum size=3pt, inner sep=0pt]
\tikzstyle{black dot small}=[fill=black, draw=black, shape=circle, minimum size=3pt, inner sep=0pt]
\tikzstyle{big white circle}=[fill=white, draw=black, shape=circle, minimum width=0.75cm]
\tikzstyle{white dot big}=[fill=white, draw=black, shape=circle, inner sep=1pt]
\tikzstyle{white dot}=[fill=white, draw=black, shape=circle, minimum size=3pt, inner sep=0pt]
\tikzstyle{flat box}=[fill=white, draw=black, shape=rectangle, minimum width=2.5cm, minimum height=0.5cm]
\tikzstyle{square}=[fill=white, draw=black, shape=rectangle]
\tikzstyle{flat box 2}=[fill=white, draw=black, shape=rectangle, minimum height=0.5cm, minimum width=1.0cm]
\tikzstyle{over }=[front]
\tikzstyle{theta}=[fill=black, draw=black, shape=ellipse, minimum height=6pt, minimum width=6pt, inner sep=0pt]
\tikzstyle{thetabig}=[fill=black, draw=black, shape=ellipse, minimum width=1cm, minimum height=0.01cm]
\tikzstyle{thetainv}=[fill=white, draw=black, shape=ellipse, minimum height=6pt, minimum width=6pt, inner sep=0pt]
\tikzstyle{thetabinv}=[fill=white, draw=black, shape=ellipse, minimum width=1cm, minimum height=0.01cm]
\tikzstyle{mid arrow}=[-, postaction={on each segment={mid arrow}}]
\tikzstyle{end arrow}=[->]
\tikzstyle{red mid arrow}=[-, draw={rgb,255: red,214; green,42; black,51}, postaction={on each segment={mid arrow}}, line width=1pt]
\tikzstyle{blue}=[-, draw=black, line width=1.5pt]
\tikzstyle{blue mid arrow}=[-, draw={rgb,255: red,23; green,37; black,167}, postaction={on each segment={mid arrow}}, line width=1pt]
\tikzstyle{over}=[-, link]
\tikzstyle{mapsto}=[{|->}]
\tikzset{
  on each segment/.style={
    decorate,
    decoration={
      show path construction,
      moveto code={},
      lineto code={
        \path [#1]
        (\tikzinputsegmentfirst) -- (\tikzinputsegmentlast);
      },
      curveto code={
        \path [#1] (\tikzinputsegmentfirst)
        .. controls
        (\tikzinputsegmentsupporta) and (\tikzinputsegmentsupportb)
        ..
        (\tikzinputsegmentlast);
      },
      closepath code={
        \path [#1]
        (\tikzinputsegmentfirst) -- (\tikzinputsegmentlast);
      },
    },
  },
  mid arrow/.style={postaction={decorate,decoration={
        markings,
        mark=at position .5 with {\arrow[#1]{stealth}}
      }}},
}
\tikzset{%
  link/.style    = { white, double = black, line width = 0.9pt,
                     double distance = 1.5pt },
  channel/.style = { white, double = gray, line width = 1.8pt,
                     double distance = 0.6pt },
}
\DeclareRobustCommand{\em}{%
	\@nomath\em \if b\expandafter\@car\f@series\@nil
	\normalfont \else \slshape \fi}
\newtheoremstyle{style1}
{13pt}
{13pt}
{}
{}
{\normalfont\bfseries}
{.}
{.5em}
{}
\theoremstyle{style1}
\newtheorem{definition}{Definition}[section]
\newtheorem{example}[definition]{Example}
\newtheorem{remark}[definition]{Remark}
\newtheoremstyle{style2}
{13pt}
{13pt}
{\slshape}
{}
{\normalfont\bfseries}
{.}
{.5em}
{}
\theoremstyle{style2}
\newtheorem*{rep@theorem}{\rep@title}
\newcommand{\newreptheorem}[2]{%
	\newenvironment{rep#1}[1]{%
		\def\rep@title{#2 \ref{##1}}%
		\begin{rep@theorem}}%
		{\end{rep@theorem}}}
\newtheorem{lemma}[definition]{Lemma}
\newtheorem{theorem}[definition]{Theorem}
\newtheorem{proposition}[definition]{Proposition}
\newtheorem{corollary}[definition]{Corollary}
\newcommand{\R}{\mathbb{R}}
\newcommand{\C}{\mathbb{C}}
\newcommand{\Z}{\mathbb{Z}}
\newcommand{\N}{\mathbb{N}}
\newcommand{\Grpd}{\catf{Grpd}}
\newcommand{\Map}{\catf{Map}}
\newcommand{\ra}[1]{\xrightarrow{\ #1 \ }}
\newcommand{\SO}{\operatorname{SO}}
\newcommand{\catf}[1]{{\mathsf{#1}}}
\let\P\undefined
\newcommand{\P}{\mathcal{P}}
\newcommand{\framed}{\catf{f}E_2}
\newcommand{\szero}{\catf{S}_0}
\newcommand{\surf}{\catf{Surf}}
\newcommand{\Surf}{\mathbf{Surf}}
\newcommand{\surfc}{\catf{Surf}^{\catf{c}}}
\newcommand{\Surfc}{\mathbf{Surf}^{\catf{c}}}
\newcommand{\hocolimsub}[1]{\underset{#1}{\operatorname{hocolim}}\,}
\newcommand{\hbdy}{\catf{Hbdy}}
\newcommand{\Hbdy}{\mathbf{Hbdy}}
\newcommand{\hbdyg}{\catf{Hbdy}^\text{\normalfont a}}
\newcommand{\Hbdyg}{\mathbf{Hbdy}^\text{\normalfont a}}
\newcommand{\As}{\catf{As}}
\newcommand{\Legs}{\catf{Legs}}
\newcommand{\ModAlg}{\catf{ModAlg}\,}
\newcommand{\cat}[1]{\mathcal{#1}}
\newcommand{\Env}{\catf{U}\,}
\newcommand{\Envint}{\catf{U}_{\!\int}\,}
\newcommand{\End}{\operatorname{End}}
\newcommand{\Hom}{\operatorname{Hom}}
\newcommand{\id}{\operatorname{id}}
\newcommand{\ev}{\operatorname{ev}}
\newcommand{\RBr}{\mathsf{RBr}}
\newcommand{\Cat}{\catf{Cat}}
\newcommand{\FinVect}{\catf{Vect}^\catf{f}}
\newcommand{\DS}{\text{/\hspace{-0.1cm}/}}
\newcommand{\Set}{\catf{Set}}
\newcommand{\Calc}{\catf{Calc}}
\let\to\undefined
\newcommand{\to}{\longrightarrow}
\let\mapsto\undefined
\newcommand{\mapsto}{\longmapsto}
\newcommand{\Rex}{\catf{Rex}}
\newcommand{\Fin}{\catf{Lex}^\catf{f}}
\newcommand{\Vect}{\catf{Vect}^\catf{f}}
\newcommand{\RForests}{\catf{RForests}}
\newcommand{\Forests}{\catf{Forests}}
\newcommand{\Graphs}{\catf{Graphs}}
\newcommand{\RGraphs}{\catf{RGraphs}}
\newcommand{\opp}{\text{opp}}
\let\colon\undefined\newcommand{\colon}{:}
\newcommand{\disk}{\mathbb{D}}
\newcommand{\comp}{\mathsf{C}}
\DeclareMathSymbol{\Phiit}{\mathalpha}{letters}{"08} 
\DeclareMathSymbol{\Psiit}{\mathalpha}{letters}{"09}
\DeclareMathSymbol{\Sigmait}{\mathalpha}{letters}{"06}
\DeclareMathSymbol{\Xiit}{\mathalpha}{letters}{"04}
\DeclareMathSymbol{\Piit}{\mathalpha}{letters}{"05}\let\Pi\undefined\newcommand{\Pi}{\Piit}
\DeclareMathSymbol{\Gammait}{\mathalpha}{letters}{"00}
\DeclareMathSymbol{\Omegait}{\mathalpha}{letters}{"0A}\let\Omega\undefined\newcommand{\Omega}{\Omegait}
\DeclareMathSymbol{\Upsilonit}{\mathalpha}{letters}{"07}
\DeclareMathSymbol{\Thetait}{\mathalpha}{letters}{"02}
\DeclareMathSymbol{\Lambdait}{\mathalpha}{letters}{"03}\let\Lambda\undefined\newcommand{\Lambda}{\Lambdait}
\let\Phi\undefined\newcommand{\Phi}{\Phiit}
\let\Sigma\undefined\newcommand{\Sigma}{\Sigmait}
\let\Psi\undefined\newcommand{\Psi}{\Psiit}
\let\Gamma\undefined\newcommand{\Gamma}{\Gammait}
\begin{document}

	\vspace*{-1.5cm}
	\begin{flushright}
		\small
		{\sf CPH-GEOTOP-DNRF151} \\
		\textsf{October 2020}
	\end{flushright}
	
	\vspace{5mm}
	
	\begin{center}
		\textbf{\LARGE{Cyclic framed little disks algebras, \\[0.2ex] Grothendieck-Verdier duality \\[0.5ex]  and handlebody group representations}}\\
		\vspace{1cm}
		{\large Lukas Müller $^{a}$} \ \ and \ \ {\large Lukas Woike $^{b}$}
		
		\vspace{5mm}

	{\slshape $^a$ Max-Planck-Institut f\"ur Mathematik\\
Vivatsgasse 7 \\  D-\,53111 Bonn \\ lmueller4@mpim-bonn.mpg.de }
		\\[7pt]
		{\slshape $^b$ Institut for Matematiske Fag \\ 
			K\o benhavns Universitet\\ Universitetsparken 5\\ DK-2100 Copenhagen \O\\  ljw@math.ku.dk }
	\end{center}
	\vspace{0.3cm}
	\begin{abstract}\noindent 
We characterize cyclic algebras over the associative and the framed little 2-disks operad in any symmetric monoidal bicategory. The cyclicity is appropriately treated in a coherent way, i.e.\ up to coherent isomorphism. When the symmetric monoidal bicategory is specified to be a certain symmetric monoidal bicategory of linear categories subject to finiteness conditions, we prove that cyclic associative and cyclic framed little 2-disks algebras, respectively, are equivalent to pivotal Grothendieck-Verdier categories and ribbon Grothendieck-Verdier categories, a type of category that was introduced by Boyarchenko-Drinfeld based on Barr's notion of a  $\star$-autonomous category. We use these results and Costello's modular envelope construction to obtain two applications to quantum topology: I) We extract a consistent system of handlebody group representations from any ribbon Grothendieck-Verdier category inside a certain symmetric monoidal bicategory of linear categories and show that this generalizes the handlebody part of Lyubashenko's mapping class group representations. II) We establish a Grothendieck-Verdier duality for the category extracted from a modular functor by evaluation on the circle (without any assumption on semisimplicity), thereby generalizing results of Tillmann and Bakalov-Kirillov.  
	\end{abstract}

	\tableofcontents

\section{Introduction and summary}
Algebras over the associative and the framed little 2-disks operad 
with values in the symmetric monoidal bicategory of (linear) categories
are well-known to be equivalent to
(linear) monoidal and balanced 
braided monoidal categories, respectively.
It is equally well-known that both operads naturally 
have the structure of a cyclic operad,  as introduced by Getzler and Kapranov \cite{gk}. This means that they come with a specific way to cyclically permute inputs of operations with the output.
Given a cyclic operad, one may, of course, forget the cyclic structure
and consider \emph{ordinary} algebras over it, but one can also consider 
\emph{cyclic} algebras which are defined to be compatible with the cyclic
structure in the appropriate sense. 
This raises the immediate question how \emph{cyclic}
algebras (with values in a symmetric monoidal bicategory such as suitable bicategories of linear categories)
over the associative and 
the framed little 2-disks operad (hereafter just referred to as framed little disks operad) can be characterized. It is implicitly understood here that we will consider these
algebraic structures 	
\emph{up to coherent isomorphism} in the appropriate sense.
In this article, we give an explicit characterization of these cyclic algebras
 in terms of
\emph{Grothendieck-Verdier duality}; 
the precise statements appear as Theorems~\ref{thmcyclas} 
and~\ref{thmcycle2} below and will also be momentarily 
discussed in course of this introduction.  
Grothendieck-Verdier duality is a notion
 proposed and investigated by Boyarchenko and Drinfeld in \cite{bd}
as a weakening of rigidity. It is based on earlier notions due to Barr \cite{barr}. 
Afterwards, we present applications of our results
in quantum topology: We 
 give a new class of explicitly computable handlebody groups representations
that satisfy excision.
These representations generalize the 
handlebody part of Lyubashenko's mapping class group representations \cite{lyubacmp,lyu,lyulex}.
Moreover, we
prove a duality result for the category obtained from a modular functor
by evaluation on the circle.

Let us give the precise statements and elaborate on the structure of the article:
The sections 
\ref{seccycmodalg0} to \ref{secfE2GV}
are of purely operadic nature
and devoted to the characterization of cyclic associative and 
framed little disks algebras in an arbitrary symmetric monoidal bicategory $\cat{M}$. 
The fact that we consider cyclic algebras in a symmetric monoidal \emph{bi}category
has two reasons. Firstly, both the associative and the framed little disks operad are aspherical, i.e.\ 
they may be seen as category-valued (in fact, groupoid-valued) operads. This makes it possible
and natural to consider 
algebras over these operads in a
symmetric monoidal bicategory because any bicategory 
is naturally enriched over categories.
Secondly, this choice perfectly matches with our motivation
coming from quantum topology as our applications will show.

We use Costello's description of cyclic (and modular) operads \cite{costello}, i.e.\ 
we describe a
 category-valued cyclic operad as a 
symmetric monoidal functor
$\mathcal{O}:\Forests\to\Cat$ from the \emph{forest category} to the category of categories (symmetric monoidal functors will here be automatically understood in the weak sense).
The objects of $\Forests$ are the graphs with one vertex and 
$n$ legs for $n\ge 1$ (the so-called \emph{corollas})
and finite disjoint unions thereof. Morphisms are given by  forests; 
we recall the necessary details in Section~\ref{secrecallcos}.
Disjoint union provides a symmetric monoidal structure.
In order to define cyclic $\mathcal{O}$-algebras 
in a symmetric monoidal bicategory $\cat{M}$ in Section~\ref{seccycmodalg},
we define the cyclic endomorphism operad 
$\End_\kappa^X:\Forests\to\Cat$
for any object $X$ of $\cat{M}$ equipped with a symmetric non-degenerate pairing 
$\kappa :X\otimes X\to I$, i.e.\ a morphism from $X\otimes X$ to the monoidal unit $I$ of $\cat{M}$
that is symmetric up to coherent isomorphism and exhibits $X$ as its own dual in the homotopy category of $\cat{M}$.
Explicitly, $\End_\kappa^X$ will send a corolla $T$ with set $\Legs(T)$ of legs to the morphism category $\cat{M}(X^{\otimes \Legs(T)},I)$, where $X^{\otimes \Legs(T)}$ is the unordered monoidal product of $X$ over $\Legs(T)$ that we define in Section~\ref{seccyclicmodalg}.
The structure of a cyclic $\mathcal{O}$-algebra  on $(X,\kappa)$ is a symmetric monoidal transformation
$A:\mathcal{O}\to\End_\kappa^X$.

Cyclic associative algebras
can be characterized in more concrete terms
if $\cat{M}$ is the symmetric monoidal bicategory $\Fin$
of finite linear categories
over an algebraically closed field $k$ that we fix throughout the article (this symmetric monoidal bicategory is frequently used in quantum algebra).
Its objects are 
 finite $k$-linear categories (the full definition is given on page~\pageref{exsymmoncat}), its 1-morphisms are  left exact functors,
 and its 2-morphisms are natural transformations.
 For this target category, we  characterize cyclic associative algebras by means of 
Grothendieck-Verdier duality  \cite{bd}:
A \emph{Grothendieck-Verdier category}
(Definition~\ref{defGV}) is 
\begin{itemize}
	\item a monoidal category $\cat{C}$ together with  an
object $K\in\cat{C}$ 
(the \emph{dualizing object})
such that the functor $\cat{C}(K,X \otimes -)$ is representable for every $X\in\cat{C}$

	\item subject to the condition that the functor $D:\cat{C}\to\cat{C}^\opp$ sending $X$ to a representing object $DX$ for $\cat{C}(K,X\otimes-)$ is an equivalence.
	
	\end{itemize}
One should understand $DX$ as the \emph{dual} of $X$. The functor $D$ is referred to as the \emph{duality functor}.

If $\cat{C}$ is rigid, one obtains such a structure with the monoidal unit as the dualizing object,
but the notion of a Grothendieck-Verdier category is strictly weaker (see e.g.\ \cite[Example~2.3]{bd}). 
A \emph{pivotal structure} on a Grothendieck-Verdier category (Definition~\ref{defpivbd})
 consists of natural isomorphisms
$\psi_{X,Y} : \cat{C}(K, X\otimes Y)\to\cat{C}(K,Y\otimes X)$ subject to two coherence conditions.  
We should remark that our conventions are dual to the ones in \cite{bd} for convenience. 
Pivotal Grothendieck-Verdier categories can also be defined inside $\Fin$ (meaning that all structure consists of (higher) morphisms in $\Fin$ instead of $\Cat$), which allows us to formulate our first main result:
\begin{reptheorem}{thmcyclas}
	The structure of a cyclic associative algebra in $\Fin$ 
	amounts precisely  to a pivotal Grothendieck-Verdier category in $\Fin$.
\end{reptheorem}

Theorem~\ref{thmcyclas}
can be further combined with a result of Street who proves in \cite[Proposition~3.2]{street}
that any Grothendieck-Verdier category aka $\star$-autonomous category can be equivalently described
as a Frobenius pseudomonoid, see also Remark~\ref{remgvfa}. 
Then Theorem~\ref{thmcyclas} tells us that a cyclic associative algebra in $\Fin$ will in particular inherit the structure of a Frobenius pseudomonoid. 

The strategy for the proof of 
Theorem~\ref{thmcyclas} is as follows: When considering cyclic algebras e.g.\ in vector spaces, it is standard that a cyclic $\mathcal{O}$-algebra is an ordinary $\mathcal{O}$-algebra plus an invariant pairing \cite{gk,markl}. The same remains true in a higher categorical context (and in particular the bicategorical context considered in the paper) with the subtle difference that the invariance of the pairing becomes \emph{structure}
instead of just a \emph{property} and leads to two types of coherence conditions that we identify in the \emph{Lifting Theorem}~\ref{thmlifting}
(that gives us the structure to \emph{lift} a non-cyclic algebra to a cyclic one).
 We formulate the Lifting Theorem
in Corollary~\ref{remgenrel} more concretely for an operad given in terms of generators and relations.
On this basis, Theorem~\ref{thmcyclas} can  be obtained through relatively tedious algebraic manipulations
and several facts on finite linear categories extracted from \cite{fss}. 
It is important to note that we can explicitly characterize cyclic associative algebras in 
\emph{any} symmetric monoidal bicategory, but prove the relation to Grothendieck-Verdier duality only for $\Fin$ or similar categories
(Remark~\ref{remalttolex}).
For example, the description  of cyclic associativity  through Grothendieck-Verdier duality is not possible in the bicategory $\Cat$ of categories. 

In order to treat the cyclic framed little disks operad $\framed$ in a similar way by means of the Lifting Theorem,
we use the presentation of this operad through ribbon braids \cite{salvatorewahl}. On the operad of ribbon braids, we
establish a cyclic structure (Proposition~\ref{Prop: Combinatorial model for cyclic structure}) which under the equivalence to the framed little disks operad corresponds to the cyclic structure coming from the identification of $\framed$ with the cyclic operad of genus zero surfaces (or also the cyclic structure from \cite{budney}), see 
Proposition~\ref{Thm: Relation ribbon braids and E2}.
We may then prove the second main result:

\begin{reptheorem}{thmalgrbr}
	The structure of a cyclic framed little disks algebra in $\Fin$ 
	is equivalent  to a  ribbon Grothendieck-Verdier category in $\Fin$.
\end{reptheorem}

The precise definition of a ribbon structure on a (pivotal) Grothendieck-Verdier category $\cat{C}$ is given  in Section~\ref{balancedGV}:
Roughly, it amounts to a braiding, i.e.\ natural isomorphisms $c_{X,Y}:X\otimes Y\to Y\otimes X$ subject to the usual hexagon relations, and a balancing, i.e.\ a natural automorphism $\theta_X : X\to X$ with $\theta_I=\id_I$, $\theta_{X\otimes Y}=c_{Y,X}c_{X,Y}(\theta_X\otimes \theta_Y)$; 
moreover, one requires $D\theta_X=\theta_{DX}$. 
Together, the braiding and the balancing give rise to a pivotal structure. Therefore, a ribbon Grothendieck-Verdier category may be equivalently  described as a pivotal ribbon Grothendieck-Verdier category with a compatibility condition on the pivotal structure, braiding and balancing (Definition~\ref{defcomppivbrbal}) as we explain in Lemma~\ref{lemmapivribbon}.
 
 In order to profit from the results in concrete applications,
 we make use of the following construction:
 A modular operad \cite{gkmod} is, roughly speaking, a cyclic operad additionally admitting self-compositions of operations.
 To any cyclic operad, one may assign the `smallest' modular operad containing it, namely the  modular envelope \cite{costello}. For a given cyclic operad $\mathcal{O}$, it is
  obtained by freely completing it to a modular operad (in a homotopically correct way).
Moreover, any cyclic algebra over this cyclic operad extends to a modular algebra over its modular envelope by a purely abstract argument.
The reason why this becomes particularly interesting for the associative and framed little disks operad is that their modular envelopes
have been computed by Costello  \cite{costello} and Giansiracusa \cite{giansiracusa} in terms of interesting and well-studied objects in low-dimensional topology.
In light of these results, Theorem~\ref{thmcyclas} and \ref{thmcycle2} have important consequences in quantum topology that are treated in Section~\ref{secapp} and summarized now.

\subparagraph{\nameref{app-hbdy}.}
By a result of Giansiracusa
\cite[Theorem~A]{giansiracusa}
there is a canonical map 
from the modular envelope of $\framed$ to the modular operad 
$\hbdyg$ 
of handlebodies
(the subscript `a' indicates  the restriction to certain allowed handlebodies used in \cite{giansiracusa}: the three-dimensional ball and the three-dimensional ball with one embedded disk are excluded).
On the level of topological modular operads, this map is an isomorphism between connected components and a homotopy equivalence 
on all connected components except for the one of the solid closed torus.
By means of Theorem~\ref{thmcycle2}
 we may now prove that 
a  ribbon Grothendieck-Verdier category leads to a consistent system of handlebody group representations:
\begin{reptheorem}{thmmainbalancedbraidedcalc2}[combined with Theorem~\ref{thmmainbalancedbraidedcalc1}]
	Let $\cat{C}$ be a  ribbon Grothendieck-Verdier category in $\Fin$
	with dualizing object $K$.
	For integers $g,n \ge 0$ and any family $X_1,\dots,X_n \in \cat{C}$ of objects in $\cat{C}$,
	the finite-dimensional morphism space
	\begin{align} V_{g,n}(X_1,\dots,X_n) := \cat{C}(K,X_1\otimes\dots\otimes X_n\otimes \mathbb{F}^{\otimes g})\label{eqnvgn} \end{align}
	defined using the canonical coend $\mathbb{F}=\int^{X\in\cat{C}}X\otimes DX$ ($D$ is the duality functor of $\cat{C}$)
	comes naturally with an action of the handlebody group, i.e.\ the mapping class group of the handlebody of genus $g$ and $n$ embedded disks in the boundary, whenever $(g,n)\neq (1,0)$. 
	The vector spaces \eqref{eqnvgn} behave locally under the sewing of handlebodies. More explicitly, there are canonical isomorphisms
	\begin{align}
	\oint^{Y\in\cat{C}} V_{g,n+2}(-,Y,DY) &\cong V_{g+1,n}(-)  \ , \\
	\oint^{Y\in\cat{C}}	V_{g_1,n+1}(-,Y)\otimes V_{g_2,m+1}(-,DY)&\cong V_{g_1+g_2,n+m}(-)
	\end{align}
	of left exact functors  $\cat{C}^{\boxtimes n}\to\FinVect$ and $\cat{C}^{\boxtimes (n+m)}\to\FinVect$, respectively, where $\oint$ is the left exact coend. These isomorphisms are compatible with the handlebody group actions.
\end{reptheorem}

For the precise definition of $\mathbb{F}$, we refer to the statement of Theorem~\ref{thmmainbalancedbraidedcalc1} in the main text.
We refer to Remark~\ref{remtorus} for a comment on the case $(g,n)=(1,0)$.

Strictly speaking,
the objects $X_1,\dots,X_n\in\cat{C}$ should not be thought of as ordered, but rather attached to the embedded disks in the boundary of the handlebody.
In order to obtain the vector space 
\eqref{eqnvgn},
an order is \emph{chosen}. These subtleties are suppressed in the above Theorem for presentation purposes, but explained in detail in Remark~\ref{remcaveatorder}.
A graphical presentation summarizing the Theorems~\ref{thmcycle2} and~\ref{thmmainbalancedbraidedcalc2} is given in Figure~\ref{Fig:sketch results}.

Theorem~\ref{thmmainbalancedbraidedcalc2} applies in particular to finite ribbon categories in the sense of \cite{egno}.
For a finite ribbon category that arises as the category of finite-dimensional modules over a ribbon Hopf algebra, 
Theorem~\ref{thmmainbalancedbraidedcalc2} specializes to the following statement:

\begin{repcorollary}{corhopf}
	Let $A$ be a finite-dimensional ribbon Hopf algebra and denote by $A_\text{coadj}^*$ the dual of $A$ with coadjoint action.
	Then for any non-negative integers $g$ and $n$ with $(g,n)\neq (1,0)$ 
	and any finite-dimensional $A$-modules $X_1,\dots,X_n$,
	the vector space 	
	\begin{align} \Hom_A\left( k ,  X_1\otimes \dots \otimes X_n \otimes \left(   A_\text{coadj}^*   \right)^{\otimes g}  \right)
	\end{align}
	of $A$-invariants of the module 
	$X_1\otimes \dots \otimes X_n \otimes \left(   A_\text{coadj}^*   \right)^{\otimes g}$
	comes canonically with an action of the 
	mapping class group of the handlebody with genus $g$ and $n$ disks embedded in the boundary of the handlebody.
\end{repcorollary}
Note that $A$ is not assumed to be factorizable.
In Example~\ref{exampleGVnonrigid}, we consider the handlebody group representations for a ribbon Grothendieck-Verdier category whose Grothendieck-Verdier duality does not come from rigidity.

The basic ingredient for the proof of Theorem~\ref{thmmainbalancedbraidedcalc2} is clearly Theorem~\ref{thmcycle2} because it abstractly leads to the desired handlebody representation using Giansiracusa's Theorem. To derive the explicit description from above, however, one needs further tools, namely a locality property for the handlebody group representations. This is afforded by an excision result (Theorem~\ref{thmexcision}) that states, roughly, that  modular algebras
with values in $\Fin$ behave `locally' under Lyubashenko's left exact coend $\oint$ \cite{lyulex}.
As a by-product, this opens a new perspective on Lyubashenko's left exact coend as the composition in the cyclic (actually, even modular) endomorphism operad.

\begin{figure}[h]
\begin{center}
\begin{overpic}[scale=0.9]
{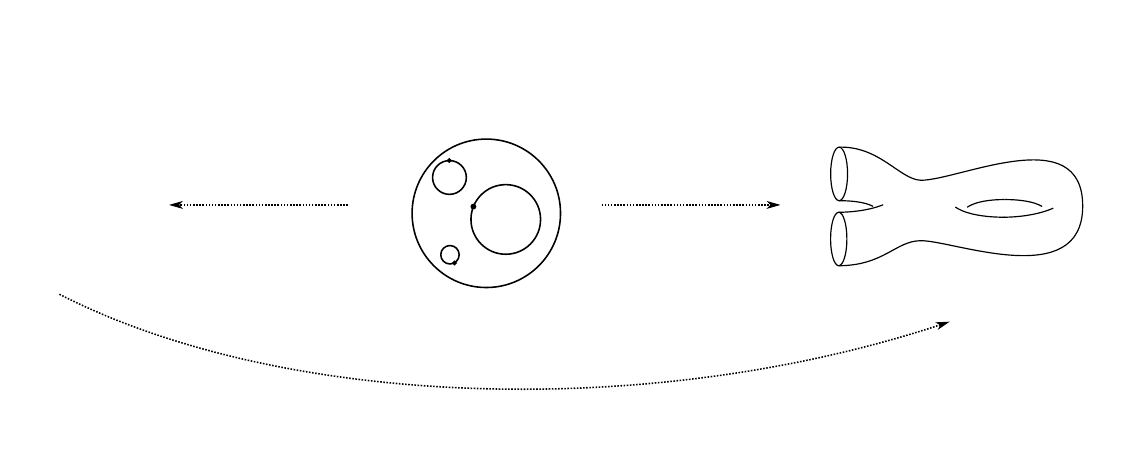}
\put(2,21){$(\cat{C},\otimes, K, c, \theta)$} 
\put(38,7){$\cat{C}(K, X\otimes Y \otimes \mathbb{F})$} 
\put(70.5,24){$X$}
\put(70.5,18){$Y$}
\put(32,12){$\text{framed little disks operad}$}
\put(0,18.5){$\text{ribbon} $}
\put(-5,16.3){$\text{Grothendieck-Verdier categories}$}
\put(75,13){$\text{handlebody groups}$}
\put(35,34){\large$\text{Cyclic operads}$}
\put(3,34){\large$\text{Algebra}$}
\put(69,34){\large$\text{Low-dimensional topology}$} 
\put(14,23){\small $\text{cyclic algebras in $\Fin$}$}
\put(53,23){\small $\text{modular envelope}$}
\put(30,3){\small $\text{gives rise to representations of}$}
\end{overpic}
\end{center}
\caption{A visualization of the different mathematical structures involved in our main result and their relations.}
\label{Fig:sketch results}
\end{figure}

In the special case that $\cat{C}$ is actually a modular category (meaning that the Grothendieck-Verdier duality actually comes from rigidity and that the braiding is non-degenerate; we review the terminology in Section~\ref{app-hbdy}), Theorem~\ref{thmmainbalancedbraidedcalc2} relates to classical constructions as follows: Under the far stronger assumption of modularity, $\cat{C}$ yields a modular functor, i.e.\ a consistent system of projective mapping class group representations by the Lyubashenko construction
\cite{lyubacmp,lyu,lyulex}. The restriction to handlebody group representations will then coincide with the handlebody group representations from Theorem~\ref{thmmainbalancedbraidedcalc2}.
Note however
that Theorem~\ref{thmmainbalancedbraidedcalc2} can be applied to way more general situations: Neither rigidity nor non-degeneracy of the braiding are needed. As a price to pay, one just finds handlebody group representations, but not mapping class group representations.

The relation to Lyubashenko's mapping class group representations is quite appealing on a conceptual level:
In the original papers \cite{lyubacmp,lyu,lyulex}, the mapping class group actions are established through tedious computations relying on a presentation of mapping class groups in terms of generators and relations; in \cite{jfcs} a description of these mapping class group actions through the combinatorial Lego-Teichmüller game of Bakalov and Kirillov 
\cite{bakifm}
is given. 
Our approach using the modular envelope and our characterization of cyclic $\framed$-algebras 
puts at least the handlebody part of Lyubashenko's construction
on purely topological grounds. 
In particular, it allows us to obtain from a topological construction the conformal blocks of a modular category (meaning the spaces \eqref{eqnvgn}) --- at least as vector spaces with handlebody group representations ---
without making any algebraic ad-hoc ansatz.

\subparagraph{\nameref{app-mf}.}
For many of the representation categories appearing in conformal and topological field theory,
\emph{duality results} have been established:

\begin{itemize}
	
	\item In \cite{huang} Huang gives a class of
	vertex operator algebras whose category of modules is rigid.
	
	\item In \cite{BDSPV15} Bartlett, Douglas, Schommer-Pries and Vicary prove that the value of an extended three-dimensional topological field theory on the circle is rigid.
	
	\item In \cite{baki} Bakalov and Kirillov prove under the assumption of semisimplicity, simplicity of the monoidal unit and a normalization axiom for the sphere that the category $\cat{C}$ obtained by evaluation of a modular functor on the circle is \emph{weakly rigid}, i.e.\ they conclude that there is an anti-equivalence $* : \cat{C}\to\cat{C}^\opp$ with natural isomorphisms
	\begin{align} \label{eqnrcatisos} \cat{C}(X,Y\otimes Z)\cong \cat{C}(Y^*\otimes X,Z)\cong \cat{C}(X\otimes Z^*,Y)\quad \text{for all}\quad X,Y,Z\in\cat{C} \ . \end{align}
	A similar result is given by Turaev in \cite[V.]{turaev} for so-called \emph{rational} modular functors.
	In \cite{turaev,baki} semisimplicity is directly imposed. Tillmann \cite[Section~3]{tillmann} presents related duality results by working in a different category of linear categories in which semisimplicity for the category on the circle is obtained as a consequence. We should emphasize here again that, in contrast to \cite{turaev,tillmann,baki}, we will not build in semisimplicity (neither directly or indirectly) into our definitions; in fact, going beyond the semisimple case is one of our main motivations.
	
	\end{itemize}
We refer to \cite[Section~1.3]{BDSPV15} for an excellent discussion of these (and more) duality results including more references to the literature. 

Our characterization of cyclic framed little disks algebras allows us to improve on the third result. Without the assumption of semisimplicity, simplicity of the monoidal unit and a normalization axiom for the sphere, we prove:
\begin{reptheorem}{thmcirclesectormf}
	The linear category extracted from a $\Fin$-valued modular functor 
	inherits a ribbon Grothendieck-Verdier structure. In particular, it is a pivotal Grothendieck-Verdier category.
\end{reptheorem}

For the notion of a modular functor, various slightly different definitions exists. We fix our notion in Definition~\ref{lemmainclhbdyc} following essentially \cite{jfcs,dmf}.

It might seem a little confusing that we find a Grothendieck-Verdier structure \emph{without} the requirement that the dualizing object actually coincides with the monoidal unit as one would assume after seeing \eqref{eqnrcatisos} (in language of \cite{bd}, such a Grothendieck-Verdier category would be called an \emph{r-category}; this is still strictly weaker than rigidity). 
However, we explain in Corollary~\ref{corrcategory} that once one imposes the (in fact very strong) requirements of semisimplicity, simplicity of the monoidal unit and a normalization axiom, the dualizing object will be forced to coincide with the monoidal unit. Therefore, Theorem~\ref{thmcirclesectormf} actually recovers the results of Tillmann and Bakalov-Kirillov as a special case.

		\subparagraph{Acknowledgments.} We are grateful to Christoph Schweigert for his constant interest in this project, helpful comments and for bringing us into contact with Grothendieck-Verdier structures.
		Theorem~\ref{thmcycle2} answers a question asked by Adrien Brochier on MathOverflow in 2015.
		We thank him not only for his question, but also for a lot of discussions 
		from which this project has greatly benefited. 
	We thank
	 Jürgen Fuchs,
	 Claudia Scheimbauer and
	 Nathalie Wahl
	for insightful discussions
	and helpful comments.
	To the authors of \cite{BDSPV15}
	--- Bruce Bartlett, Chris Schommer-Pries, Chris Douglas and Jamie Vicary ---
	we are grateful for a helpful correspondence on rigidity in context of modular functors.
Finally, we thank the anonymous referee whose comments have helped us to improve the manuscript significantly.

LM is supported by the Max Planck Institute for Mathematics in Bonn.
	LW gratefully acknowledges support by the RTG 1670 ``Mathematics inspired by String theory and Quantum
	Field Theory'' (DFG) at the University of Hamburg,
	the Danish National Research Foundation through the Copenhagen Centre for Geometry
	and Topology (DNRF151)
	and  by the European Research Council (ERC) under the European Union's Horizon 2020 research and innovation programme (grant agreement No. 772960).

\section{Cyclic and modular operads and algebras over them\label{seccycmodalg0}}
In this section, we recall the notions of cyclic and modular operads
introduced by Getzler and Kapranov in \cite{gk,gkmod}. 
The definition of \emph{algebras} over cyclic and modular operads requires the notion of an endomorphism operad that is defined by means of non-degenerate pairings. 

\subsection{Preliminaries on the definition of cyclic and modular operads via graphs\label{secrecallcos}}
In \cite{costello} Costello gives a very efficient description of operads, cyclic operads and modular operads based on different categories of graphs.
We will adopt this description and therefore briefly recall the most important definitions:
A \emph{graph} consists of a set $H$ of \emph{half edges} and a set $V$ of 
\emph{vertices} together with a map $H\longrightarrow V$ and an involution $\iota\colon H\longrightarrow H$
specifying how half edges are glued together. 
The orbits of the involution $\iota$ are the \emph{edges} of the graph.
Fixed points of $\iota$ are called \emph{external legs} (\emph{legs}, for short). We denote by $\Legs(\Gamma)$ the set of external legs of a graph $\Gamma$. 
We may realize a graph $\Gamma$ as a topological space $|\Gamma|$ with 
 the vertices of $\Gamma$ as the 0-cells and  the edges of $\Gamma$ as the 1-cells. 
A \emph{corolla} is
a graph with one vertex and only external legs. 
Often we will denote a graph as a pair $\Gamma=(V,H)$ of the set of vertices and the set of half edges suppressing all other parts of the structure in the notation.

Let $\Gamma=(V,H)$ and $\Gamma'=(V',H')$ be graphs. A \emph{morphism} of graphs consists of 
maps $V\longrightarrow V'$ and $H\longrightarrow H'$ which are compatible with the graph structure in 
the obvious way.

\begin{figure}[h]
\begin{center}
\begin{overpic}[scale=1]
{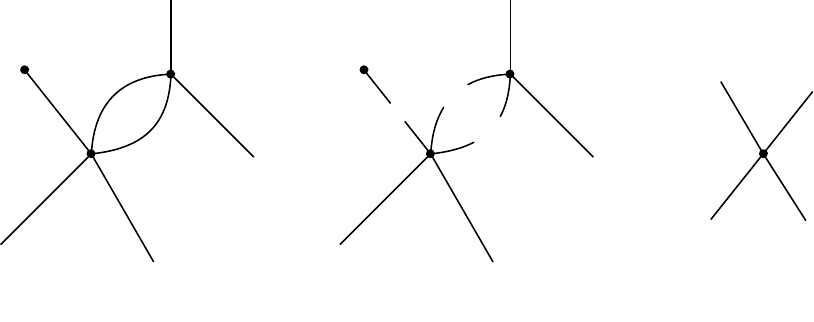}
\put(12,2){\large$\Gamma$}
\put(52,2){\large$\nu(\Gamma)$}
\put(91,2){\large$\pi_0(\Gamma)$}
\end{overpic}
\end{center}
\caption{A sketch for the graphs $\nu(\Gamma)$ and $\pi_0(\Gamma)$.}
\label{Fig:Graph operations}
\end{figure}    
Given a graph $\Gamma$ we can form a new graph $\nu (\Gamma)$ by cutting open all internal edges. 
Formally, this replaces the involution on the half edges by the identity map. 
We can also form a graph $\pi_0(\Gamma)$ by contracting all internal edges. In Figure~\ref{Fig:Graph operations} a pictorial presentation of these operations  
is given. 

The category $\Graphs$ has as objects graphs which are finite disjoint unions of corollas.
 A morphism 
$\gamma_1\longrightarrow \gamma_2$ is 
given by an equivalence class of a graph $\Gamma$ together with isomorphisms 
$\varphi_1\colon \gamma_1 \longrightarrow \nu (\Gamma)$ and $\varphi_2\colon \gamma_2 \longrightarrow \pi_0 (\Gamma)$; note that here $\varphi_1$ and $\varphi_2$ are morphisms of graphs in the above sense, but not morphisms in the category $\Graphs$ (which is named afters its morphisms). Two such triples $(\Gamma,\varphi_1,\varphi_2)$ and $(\Gamma',\varphi_1',\varphi_2')$ are equivalent if there exists an isomorphism $\psi \colon \Gamma\longrightarrow \Gamma'$ satisfying 
$\varphi_1'=\nu(\psi)\circ \varphi_1$ and $\varphi_2'=\pi_0(\psi)\circ \varphi_2$. The composition $\Gamma_2\circ \Gamma_1$ is defined by
replacing the vertices of $\Gamma_2$ by the graph $\Gamma_1$, see Figure~\ref{Fig: Composition}.
We denote by $\Forests$ the subcategory of $\Graphs$ whose objects are those objects of $\Graphs$ which do not contain a corolla with zero legs and whose morphisms are forests, i.e.\ disjoint unions of contractible graphs.
\begin{figure}[h]
\begin{center}
\begin{overpic}[scale=0.7]
{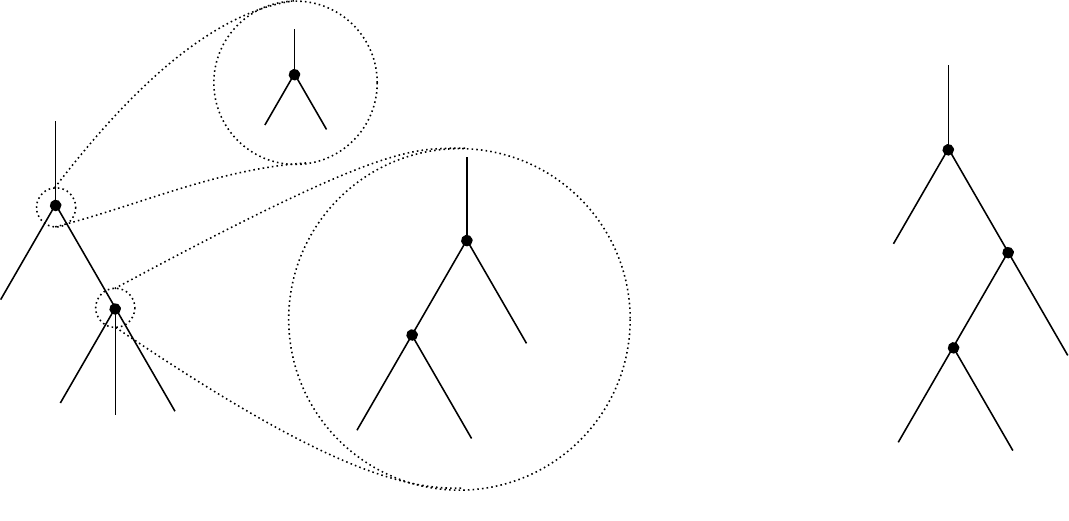}
\put(5,0){\large$\Gamma'$}
\put(35,0){\large$\Gamma$}
\put(65,20){\large$=$}
\put(85,0){\large$ \Gamma' \circ \Gamma$}
\end{overpic}
\end{center}
\caption{A sketch for the composition $\Gamma' \circ\Gamma $ of two morphisms $\Gamma
,\Gamma'$ in $\Graphs$.}
\label{Fig: Composition}
\end{figure}

Finally, we define the  category $\RForests$ of \emph{rooted forests}: A \emph{rooted graph} 
is a graph $\Gamma$ equipped with a section $s\colon V(\pi_0(\Gamma))\longrightarrow \Legs(\Gamma)$ of the 
obvious map $\Legs(\Gamma)\longrightarrow V(\pi_0(\Gamma))$, i.e.\
in each component of $\Gamma$ we distinguish an external leg that we refer to as the \emph{root}. 
 Morphisms of rooted graphs are morphisms of
the underlying graph which are compatible with the specified sections. Note that a rooted forest $\Gamma$ induces 
the structure of a rooted graph on $\nu (\Gamma)$ by declaring for every vertex the edge in the direction
of the root of $\Gamma$ as the root of the vertex in $\nu(\Gamma)$.
The category $\RForests$ is defined just like $\Forests$ with objects and morphisms replaced by their rooted 
version. 
There is a functor $\RForests \longrightarrow \Forests$ which forgets the root.  
 \begin{remark}
 Our definitions slightly differ from the definitions used in \cite{costello} where  only  at least trivalent corollas are allowed as objects
 of $\Forests$ and  $\RForests$. In other words, in \cite{costello}  operads without arity zero and arity one operations are considered.   
 \end{remark}

The categories $\RForests, \Forests$ and $\Graphs$ are used in 
\cite{costello}
to give a definition of operads,
cyclic operads and modular operads with values in a symmetric monoidal (higher) category $\cat{S}$, see also \cite[Section~2]{giansiracusam}.
Before making this precise in the special case that we are interested in,
let us give the idea:
An ordinary/cyclic/modular operad in $\cat{S}$ is a symmetric monoidal functor \begin{align} \label{eqndefop}
\mathcal{O}:\RForests/\Forests/\Graphs \to \cat{S} \ . \end{align}
Here `symmetric monoidal' has to be understood in the appropriate weak sense (to be made precise momentarily in the bicategorical case). 
This very nicely allows us to define ordinary/cyclic/modular operad for which the associativity of operadic composition is relaxed up to coherent homotopy. 

In this paper,
the emphasis lies on aspherical topological operads, i.e.\ topological operads whose spaces of operations are aspherical, where aspherical means that all homotopy groups of degree two and higher are trivial. 
Famous examples include the associative operad and the (framed) little disks operad
that will be treated in Section~\ref{seccycas} and \ref{secfE2GV} of this article, respectively. 
Such operads can naturally be regarded as groupoid-valued or, more generally, category-valued. We will see below in Section~\ref{seccycmodalg} that for category-valued operads, one can naturally consider algebras in any symmetric monoidal bicategory. 
Therefore, we will develop the theory of ordinary/cyclic/modular operads and their algebras with values in a symmetric monoidal \emph{bi}category, 
which is also in line with our motivations coming from quantum topology.

We assume some familiarity with the theory of symmetric monoidal bicategories; we refer e.g.\ to \cite[Chapter~2]{schommerpries} for a detailed discussion and to \cite{leinster} for a short introduction (however without the treatment of monoidal structures). In particular, we rely on the following notions: By a \emph{bicategory} we mean a three-layered categorical structure with objects, 1-morphisms and 2-morphisms (sometimes also called 0-cells, 1-cells and 2-cells)
in the weak sense. A morphism between bicategories (sometimes also referred to as (weak) 2-functor) will just be called \emph{functor}. \emph{Symmetric monoidal functors} between \emph{symmetric monoidal bicategories}  are to be understood in a strong (not in any kind of lax) sense unless otherwise stated.  
In particular, a symmetric monoidal functor between symmetric monoidal bicategories comprises various sorts of coherence data subject to coherence conditions. 
We will briefly illustrate this after the next definition.

\begin{definition}\label{Def: Operads}
	Let $\cat{M}$ be a symmetric monoidal bicategory.
	An \emph{operad} in $\cat{M}$ is a symmetric monoidal functor $\cat{O}:\RForests \to \cat{M}$, where we 
	consider $\RForests$ as a symmetric monoidal bicategory with only trivial 2-morphisms. 
	We define \emph{cyclic} and \emph{modular operads} by replacing $\RForests$ with $\Forests$ and $\Graphs$, respectively.
\end{definition}

As explained before the definition, we understand the symmetric monoidal functors $\cat{O}$ out of $\RForests$, $\Forests$ and $\Graphs$ to $\cat{M}$ to be weak and not strict. 
This means in particular that for composable morphisms $T \ra{\Gamma} T' \ra{\Gamma'} T''$ the 1-morphisms $\cat{O}(\Gamma'\circ \Gamma)$ and $\cat{O}(\Gamma')\circ \cat{O}(\Gamma)$ are not necessarily equal, but related by a specific 2-isomorphism \begin{align} \label{eqncoherenceiso}\cat{O}(\Gamma'\circ \Gamma)\cong\cat{O}(\Gamma')\circ \cat{O}(\Gamma)
\end{align} that is part of the data. Furthermore, there are 1-equivalences $\cat{O}(T) \otimes \cat{O} (T') \ra{\simeq} \cat{O}(T\sqcup T')$.

Let us compare Definition~\ref{Def: Operads} to the `usual' definition of a (symmetric) operad $\mathcal{O}$ in $\cat{M}$ which is typically given in terms of the following data:

\begin{itemize}
	\item 
	Objects $(\mathcal{O}(n))_{n\ge 0}$ in $\cat{M}$ carrying a right action of the permutation group $\Sigma_n$ on $n$ letters. We interpret $\mathcal{O}(n)$ as the object of $n$-ary operations. 
	
	\item   Maps for all $1 \leq j \leq n$ and $m\geq 0$
	\begin{align}
	\circ_j \colon \mathcal{O}(n) \otimes \mathcal{O}(m) \to \mathcal{O} (n-1+m) \ . 
	\end{align}
	We think of these maps as partial composition maps.
\end{itemize}
Note that we consider a definition of an operad \emph{without} operadic identity.

The description of an operad just given in terms of objects of operations can be extracted from a symmetric monoidal functor $F: \RForests \to \cat{M}$. To see this, evaluate $F$ on the corolla $T_{n}$ with $n+1$ legs which we identify with the set $\{0,\dots , n \}$ with $0$ marked as the root. We then obtain the objects $\mathcal{O}(n):=F(T_{n})$. From a permutation $\sigma \in \Sigma_n$, we can construct an automorphism of $T_{n}$ with $T_n$ as underlying graph. The isomorphism $\varphi_2$  permutes the edges $\{1,\dots ,  n\}$ according to $\sigma$ and $\varphi_1$ is defined to be the identity of $T_n$. This gives us a $\Sigma_n$-action on $\mathcal{O}(n)$ up to coherent isomorphism.
The coherence isomorphisms for the $\Sigma_n$-action are a special case of the isomorphisms~\eqref{eqncoherenceiso} applied to permutation morphisms.
By evaluation of $F$ on morphisms in $\RForests$ and monoidality of $F$ we obtain the composition map for the objects $\mathcal{O}(n)$. The equivariance and associativity axioms follow from the functoriality of $F$
(again, these holds up to coherent isomorphism thanks to~\eqref{eqncoherenceiso}). This can be seen to provide an equivalence between the two descriptions which holds analogously for cyclic and modular operads, see also \cite{costello}. In the case of cyclic 
and modular operads,
the cyclic permutation of legs induces an isomorphism $\tau_n : T_n \cong T_n$ in $\Forests$ (or $\Graphs$)
 with $\tau_n^{n+1}=\id_{T_n}$. As above, the underlying graph of $\tau_n$ is $T_n$, $\varphi_1$ is the identity and $\varphi_2$ the map sending the edge $i$ to $i+1$ modulo $n$ 
(note that $\tau_n$ is \emph{not} a morphism in $\RForests$). These morphisms give rise to an action
of the cyclic group $\Z_{n+1}$ on $\cat{O}(T_n)$ which is part of the more traditional definition of cyclic (and modular)
 operads. 
 
Again, let us emphasize that this defines operads \emph{without} an operadic identity. However, all 
operads appearing in this paper have operadic unit, and it will be important to correctly keep track 
of those. To this end, we make the following definition:

\begin{definition}
Let $\mathcal{O}$ be an $\cat{M}$-valued modular operad. An \emph{operadic identity} for $\mathcal{O}$ consists of the following data:
\begin{itemize}
	\item 
A 1-morphism $1_\mathcal{O} \colon I \to  \mathcal{O}(T_1)$ which is a homotopy fixed point for the $\Z_2$-action on $\mathcal{O}(T_1)$, i.e.\ it comes with a 2-isomorphism
\begin{equation}
	\begin{tikzcd}[row sep=0.4cm] 
		& \mathcal{O}(T_1) \ar[dd, "\mathcal{O}(\tau_1)"]  \\ 
		I \ar[ru, "1_\mathcal{O}"] \ar[rd,"1_\mathcal{O}",swap,""{name=U}] & \\ 
		\  & \mathcal{O}(T_1)
		\ar[from=1-2, to=U, shorten <=5, shorten >=5, Leftarrow,"\cong"]
	\end{tikzcd}
\end{equation} squaring to the identity,
\item and for all $n\in \N$ and all morphisms $\Gamma \colon T_1 \sqcup T_n \to T_n$ in $\Graphs$,
2-isomorphisms 
\begin{equation}
\begin{tikzcd}
I\otimes \cat{O}(T_n) \ar[dd, "\simeq", swap] \ar[rrrdd, Rightarrow, "1_{\Gamma}"  near start, shorten <=5, shorten >=70, swap] \ar[rr, "1_\cat{O}\otimes \id_{\cat{O}(T_n)}"] & & \mathcal{O}(T_1) \otimes \cat{O}(T_n) \ar[r, "\simeq"] & \mathcal{O}(T_1 \sqcup T_n) \ar[llldd, "\cat{O}(\Gamma)", bend left=20] \\ 
 & & & \\
 \mathcal{O}(T_n) & & & \ 
\end{tikzcd}
\end{equation}
\end{itemize} 
such that
the application of the unitality morphisms $(1_\cat{O},1_\Gamma)$ 
is compatible with composition
(applying the unitality morphisms 
\emph{before} 
and \emph{after} composing
is required to yield the same result) and cyclic symmetry.   
In more detail:\enlargethispage*{1cm}
For all morphisms $\Omega \colon \sqcup_{i=1}^m T_{n_i} \to T_{n'}$, $\Gamma \colon T_1 \sqcup T_{n'} \to T_{n'}$ and $\Gamma'\colon  T_1 \sqcup T_{n_j} \to T_{n_j} $ in $\Graphs$ such that $ \Gamma \circ  ( \tau_1^{\varepsilon} \sqcup \Omega) = \Omega \circ (\id_{ \sqcup_{i\neq j} T_{n_i}} \sqcup \Gamma') $, where $\varepsilon$ is either $0$ or $1$,
\begin{equation}
\begin{tikzcd}
I\otimes \mathcal{O}(\sqcup_{i=1}^m T_{n_i}) \ar[dd, "\simeq", swap]  \ar[rr, "\id_I \otimes \mathcal{O}(\Omega)"] & & I \otimes \mathcal{O}(T_{n'})   \ar[rr, "1_\cat{O}\otimes \id_{\cat{O}(T_{n'})}"]  \ar[rrrdd, Rightarrow, "1_{\Gamma}"  near start, shorten <=5, shorten >=70, swap] \ar[dd, "\simeq"] & & \mathcal{O}(T_1) \otimes \cat{O}(T_{n'}) \ar[r, "\simeq"] & \mathcal{O}(T_1 \sqcup T_{n'}) 
 \ar[llldd, "\cat{O}(\Gamma)", bend left=20] \\ 
 & & & & & \\
 \mathcal{O}(\sqcup_{i=1}^m T_{n_i}) \ar[rr, " \mathcal{O}(\Omega)", swap] \ar[rruu, Rightarrow, "\cong" , shorten <=5, shorten >=5, swap]  & & \mathcal{O}(T_{n'}) & & & \ 
\end{tikzcd}
\end{equation}
\begin{equation}
=
\begin{tikzcd}
	I\otimes \mathcal{O}(\sqcup_{i=1}^m T_{n_i}) \ar[ddddd, "\simeq", swap] \ar[rrd, "1_\cat{O}\otimes \id_{\mathcal{O}(\sqcup_{i=1}^m T_{n_i})}"] \ar[rrdd, "1_\cat{O}\otimes \id_{\mathcal{O}(\sqcup_{i=1}^m T_{n_i})}", swap]  \ar[rr, "\id_I \otimes \mathcal{O}(\Omega)"]  & & I \otimes \mathcal{O}(T_{n'}) \ar[d, Rightarrow,  shorten <=2, shorten >=2, "\cong"]  \ar[rr, "1_\cat{O}\otimes \id_{\cat{O}(T_{n'})}"]  & & \mathcal{O}(T_1) \otimes \cat{O}(T_{n'}) \ar[r, "\simeq"] & \mathcal{O}(T_1 \sqcup T_{n'}) 
	\ar[lllddddd, "\cat{O}(\Gamma)", bend left=20] \\ 
 \ar[ddddrr, Rightarrow,  shorten <=20, shorten >=60, "1_{\Gamma'}"]	& \ \ar[r, Rightarrow, shorten <=17, shorten >=2,"\ \ \cong"] & \mathcal{O}(T_1)  \otimes \mathcal{O}(\sqcup_{i=1}^m T_{n_i}) \ar[d, "\mathcal{O}(\tau_1^{\varepsilon})\otimes \id_{\mathcal{O}(\sqcup_{i=1}^m T_{n_i})}"] \ar[rru, "\id_{\cat{O}(T_1)} \otimes \cat{O}(\Omega) ", swap]  & & & \\ 
 & & \mathcal{O}(T_1)  \otimes \mathcal{O}(\sqcup_{i=1}^m T_{n_i}) \ar[d, "\simeq"] & & & \\
	& &   \mathcal{O}(T_1\sqcup T_{n_j}) \otimes \cat{O}( \sqcup_{i\neq j} T_{n_i})  \ar[lldd, "\mathcal{O}(\Gamma')\otimes \id "] \ar[rr, Rightarrow,  shorten <=2, shorten >=20, "\cong"] & &  \ & \\
	& & & & & \\
	\mathcal{O}(\sqcup_{i=1}^m T_{n_i}) \ar[rr, "\mathcal{O}(\Omega)", swap]  &  \ & \mathcal{O}(T_{n'}) & & & \ 
\end{tikzcd}
\end{equation} 
holds, where the 1-equivalences labeled with $\simeq$ are the coherences of $\cat{M}$ and
$\cat{O}$ and where the unlabeled 2-morphisms come from the homotopy fixed point structure for the operadic unit in the case $\varepsilon=1$ and trivial for $\varepsilon=0$. Note that we suppress some coherence isomorphisms for readability in the diagram.  
We say that $\mathcal{O}$ is a \emph{unital operad} if there exists a unit in the sense just defined (we do not fix 
the unit).  
\end{definition}
The distinction of the two cases $\varepsilon=0,1$ is important to ensure the right compatibility with the morphisms
$\tau_n$.  

Units for cyclic and ordinary operads are defined analogously, where in the ordinary case one does 
not require it to be a homotopy fixed point.

\begin{remark}\label{Rem: Uniqunes of units}
One might expect the choice of a unit to be structure on a modular (or cyclic or ordinary) operad $\mathcal{O}$. However, if we 
have two choices of units $(1_{\mathcal{O}},1_{\Gamma})$ and $(1'_{\mathcal{O}},1'_{\Gamma})$,
 there exists
a unique 2-isomorphism
between $1_{\mathcal{O}}$ and $1'_{\mathcal{O}}$ compatible with 
the 2-isomorphisms $1_{\Gamma}$ and $1'_{\Gamma}$.
It is
 constructed by using the morphism $c \colon T_1\sqcup T_1 \to T_1$ that realizes the composition of arity one operations:
\begin{equation}
\begin{tikzcd}
\ar[rrdd, Rightarrow, "1'_c" near end,  shorten <=50, shorten >=2] & & \cat{O}(T_1) \\ 
 & & \\
I \ar[rruu, "1_\cat{O}"] \ar[rr, " 1_\cat{O} \otimes 1'_\cat{O}"] \ar[rrdd, "1'_\cat{O}",swap] & &  \cat{O}(T_1) \otimes \cat{O}(T_1) \ar[dd, "\cat{O}(c)"] \ar[uu, "\cat{O}(c)",swap] \\ 
 & & \\
 \ar[uurr, Leftarrow, "1_c^{-1}" near end,  shorten <=55, shorten >=2] & & \cat{O}(T_1)
\end{tikzcd}
\end{equation}
In this sense, the existence of an operadic unit is a property of $\mathcal{O}$ and not a structure.    
\end{remark}

\begin{example}\label{exsymmoncat}
	In this article, the following examples of symmetric monoidal bicategories will be relevant: \small 
	\begin{center}
	\begin{tabular}{l|p{0.3\textwidth}|p{0.15\textwidth}|p{0.2\textwidth}|p{0.15\textwidth}}
		 & objects & 1-morphisms & 2-morphisms & monoidal structure \\ \hline \hline
		$\Cat$ & categories & functors & natural transformations & Cartesian product \\ 
		$\Fin$ & finite categories over our fixed algebraically closed field $k$ \footnotesize (linear Abelian cate\-gories 
		with finite-dimensional morphism spaces, enough projective
		objects, finitely many isomorphism classes of simple objects such that every object has finite length) \normalsize & left exact functors & natural transformations & Kelly product (coincides here with the Deligne product)
	\end{tabular}
\end{center}
\normalsize
The symmetric monoidal bicategory $\Fin$ is defined dually to a finite version of $\Rex$ in \cite{bzbj}, see also \cite[Theorem~3.2]{fss}.
It is frequently used in many areas of representation theory, in particular in quantum algebra. 
Recall that a $k$-linear category is finite if and only if it is $k$-linearly equivalent to the category of finite-dimensional modules over some finite-dimensional $k$-algebra; we refer e.g.\ to \cite[Proposition~1.4]{dss} for this well-known statement. 
Note that describing a finite category as the finite-dimensional modules over a finite-dimensional algebra is rarely useful because we will mostly consider finite categories with additional structure like a monoidal product, and this additional structure cannot necessarily be described on the level of the algebra.
\end{example}

\begin{remark}[Graphical calculus in symmetric monoidal bicategories]\label{remgraphcalc}
	Symmetric monoidal bicategories admit a graphical calculus
	that we briefly recall now:
	Objects correspond to lines, and the monoidal product corresponds to juxtaposition.
	The juxtaposition of the empty collection of lines (i.e.\ no line)
	represents the monoidal unit.
	A 1-morphism $F: \bigotimes_{i=1}^m X_i \to \bigotimes_{j=1}^n Y_j$ will be written as a box 
	\begin{align}
	\tikzfig{graphical_calculus}
	\end{align}
	with $m$ ingoing legs labeled with the objects $X_1,\dots,X_m$ and $n$ outgoing legs labeled with $Y_1,\dots,Y_n$.
	The legs corresponding to the source objects and target objects will be attached to the bottom and the top of the box representing $F$, respectively. 
	The symmetric braiding will be represented by a crossing of lines (thanks to symmetry, overcrossing and undercrossing need not be distinguished).
	A 2-morphism $\alpha$ between 1-morphisms $F$ and $G$ with coinciding source and target object will be represented by an arrow
	\begin{align}
	\tikzfig{graphical_calculus_2}
	\end{align}
	allowing us to efficiently write commuting diagrams for 2-morphisms. We suppress coherence morphisms in the 
graphical calculus because these can be inserted in an essentially unique way (up to canonical higher isomorphism).
\end{remark}

\subsection{Non-degenerate pairings\label{seccycmodalg}}
For an operad $\mathcal{O}$ in $\Cat$, one can define an algebra over $\mathcal{O}$ in a
symmetric monoidal bicategory $\mathcal{M}$ as an operad map from $\mathcal{O}$ to the $\Cat$-valued endomorphism operad built from some object in $\mathcal{M}$ that is supposed to carry the algebra structure; we refer to \cite{markl,lodayvallette,FresseI} for an introduction to the theory of operads and the algebras over them.
To generalize the notion of an algebra over an operad to cyclic and modular operads, we need a cyclic and modular version of an endomorphism operad.
This is accomplished by considering (non-degenerate and symmetric) pairings on objects in $\mathcal{M}$ \cite{gk}. 
If we are dealing with vector spaces as our target category, then it is clear what is meant by a non-degenerate pairing. For higher categories, a little more care is required.

\begin{definition}\label{defpairing}
	Let $\cat{M}$ be a symmetric monoidal bicategory.
	A \emph{pairing} on $X \in \cat{M}$ is defined to be a morphism $\kappa : X\otimes X\to I$, where $I\in\cat{M}$ is the monoidal unit of $\cat{M}$.
A pairing $\kappa : X \otimes X \to I$ is called \emph{non-degenerate}
	if $\kappa$ exhibits $X$ as its own dual  in the homotopy category of $\cat{M}$ (by symmetry left and right dual coincide). 
		The pairing $\kappa$ exhibiting $X$ as its own dual means the following: There is a map $\Delta: I \to X\otimes X$, called \emph{coevaluation}, such that $\kappa$ and $\Delta$ satisfy the usual snake relations
	\begin{align}\label{eqnsnakeref}
		\tikzfig{snake}
	\end{align}	
	up to isomorphism, i.e.\ they lead to \emph{snake isomorphisms} instead of snake relations. 
\end{definition}

\begin{remark}\label{coevaluationrem}
The isomorphisms in~\eqref{eqnsnakeref} can always be chosen to be coherent~\cite[Section 2]{Pstragowski}, and our convention will be to always choose them that way, see also Remark~\ref{remsymconcrete} for more details. 	
\end{remark}

\begin{remark}\label{remadjunctions}
	If $\kappa : X \otimes X\to I$ is a non-degenerate pairing on an object $X$ in a symmetric monoidal bicategory,
	we obtain adjunctions $-\otimes X\dashv -\otimes X$ and $X\otimes - \dashv X\otimes -$, i.e.\
	natural equivalences
	\begin{align}
	\cat{M}(Y\otimes X,Z)&\simeq \cat{M}(Y,Z\otimes X) \ , \label{eqnfirstequivdual} \\
	\cat{M}(X\otimes Y,Z)&\simeq \cat{M}(Y,X\otimes Z) \ 
	\end{align}for $Y,Z\in\cat{M}$.
	These equivalences can be expressed explicitly in terms of the pairing and its coevaluation $\Delta$. For example, \eqref{eqnfirstequivdual} sends $f:Y\otimes X\to Z$ to
	\begin{align}
Y \ra{\id_Y \otimes \Delta} Y\otimes X\otimes X\ra{f\otimes \id_X} Z\otimes X \ . 
\end{align}
	\end{remark}

As motivated above, we can use the notion of a pairing
to define cyclic and modular endomorphism operads.
It will be crucial to consider symmetric pairings:

\begin{definition}\label{defsymmetricstructure}
	Let $\cat{M}$ be a symmetric monoidal bicategory.
	For $X \in \cat{M}$, consider the $\mathbb{Z}_2$-action on $X\otimes X$ coming from the symmetric braiding on $\cat{M}$ and the induced $\mathbb{Z}_2$-action on the morphism category $\cat{M}(X\otimes X,I)$.
	We define a \emph{symmetric pairing} on $X$ as a homotopy fixed point of the $\mathbb{Z}_2$-action on $\cat{M}(X\otimes X,I)$.
	We call a symmetric pairing \emph{non-degenerate} if the underlying pairing is non-degenerate.
\end{definition}

\begin{remark}\label{remsymconcrete}
Concretely,
the structure of a homotopy fixed point on a pairing $\kappa : X \otimes X \to I$ consists of a natural isomorphism 
$\Sigma_\kappa \colon \kappa  \cong \kappa  \tau$, where $\tau$ is the braiding $X\otimes X\to X\otimes X$, such that the composition $\kappa \cong\kappa \tau \cong \kappa \tau^2 =\kappa$ is the identity transformation. The symmetry can dually be  described by an 
isomorphism $ \Sigma_\Delta \colon \Delta \cong \tau  \Delta$ for the coevaluation $\Delta \colon I \to X\otimes X$  via
\begin{align}
\tikzfig{symmetrycopairing}
\end{align}
where the unlabeled isomorphisms are the snake isomorphisms. Again, we find that the composition $  \Delta \cong 
\tau \Delta \cong \tau^2 \Delta = \Delta$ is the identity. 
The coherence relations for the snake isomorphism imply that various
diagrams one can form using the symmetry isomorphisms $\Sigma_\kappa$ and $\Sigma_\Delta$ commute.
For example, the diagram 
\begin{align}\label{Eq:compatability symmetry}
\tikzfig{diagram_symmetry}
\end{align}
commutes. 
\end{remark}

\subsection{Cyclic and modular endomorphism operads\label{seccyclicmodalg}}
Having defined non-degenerate symmetric pairings,
we are now in a position to define cyclic and modular endomorphism operads:
As a first step, let us define for a corolla $T\in \Graphs$ and an object $X\in\cat{M}$ in a symmetric monoidal bicategory $\cat{M}$ the \emph{unordered monoidal product} $X^{\otimes \Legs (T)}$ (we could make this definition for any finite set instead of the set $\Legs(T)$ of legs of $T$).
The definition uses that $\cat{M}$ is symmetric monoidal and can be given as follows:
Let $\cat{L}$ be the category whose objects are orders of the set $\Legs(T)$ and whose morphisms are 
bijections compatible with the orders.
Consider the natural functor $\cat{L}\longrightarrow \cat{M}$ sending an object
$\{\ell_1<\ell_2<\dots < \ell_n\}$ to $\otimes_{j=1}^n X$ and a morphism to the corresponding permutation of tensor factors. We define $X^{\otimes \Legs(T)}$ as the 
2-colimit of this functor.
After choosing an order for $\Legs(T)$, the unordered monoidal product is equivalent to $X^{\otimes |\Legs(T)|}$, but the unordered monoidal product has the advantage that it can be defined \emph{without choosing an order}. 

Now for a symmetric pairing $\kappa : X \otimes X \to I$ on an object $X$ in a symmetric monoidal bicategory $\cat{M}$ and a corolla $T$, we set \begin{align} \End_\kappa^X (T) :=  \cat{M}\left(X^{\otimes \Legs(T)},I\right) \end{align} 
and extend monoidally.
For a morphism $(\Gamma, \varphi_1,\varphi_2)\colon \sqcup_{i=1}^{k_1} T^{(i)} \longrightarrow \sqcup_{j=1}^{k_2} {T'}^{(j)}$ in $\Graphs$, we need to construct a functor  
\begin{align}\label{eqnendonmorph}
\End_\kappa^X(\Gamma, \varphi_1,\varphi_2) :
 \prod_{i=1}^{k_1} \cat{M}\left( X^{\otimes \Legs(T^{(i)})},I \right) \longrightarrow \prod_{j=1}^{k_2}  \cat{M}\left(    X^{\otimes \Legs({T'}^{(j)})},I \right) \  
\end{align}
(later we will again suppress $\varphi_1$ and $\varphi_2$ in the notation and write $\End_\kappa^X(\Gamma)$ instead of $\End_\kappa^X(\Gamma, \varphi_1,\varphi_2)$).
For the definition of~\eqref{eqnendonmorph},
we can concentrate on the case that $\Gamma$ is connected.
Afterwards, $\End_\kappa^X$ can be extended monoidally to non-connected graphs.
In the connected case, $k_2=1$ holds, and we write $T'={T'}^{(1)}$. 
As a first step, we define maps
\begin{align}\label{axufamilymapseqn0}
 {X^{\otimes |\Legs(T')|}}\to  \bigotimes_{i=1}^{k_1} {X^{\otimes |\Legs(T^{(i)})|}}
\end{align}
for every ordering of 
 $\Legs(T^{(1)}),\dots ,\Legs(T^{(k_1)})$ and $\Legs(T')$ (an order is chosen for \emph{each} of these sets separately). These are defined by the commutativity of the square
\begin{equation}
\begin{tikzcd}
 {X^{\otimes |\Legs(T')|}}\ar[swap]{dd}{}  \ar[]{rr}{\simeq } & &   {X^{\otimes |\Legs(T')|}}   
\otimes
\left(  \substack{\otimes\\ \text{internal} \\  \text{edges of $\Gamma$}  } \, I  \right)  \ar{dd}{       \id \otimes       \left(  \substack{\otimes\\ \text{internal} \\  \text{edges of $\Gamma$}  }\ \  \Delta  \right)    }   \\
& & \\
\otimes_{i=1}^{k_1} {X^{\otimes |\Legs(T^{(i)})|}} & &  {X^{\otimes |\Legs(T')|}}   
\otimes
\left(  \substack{\otimes\\ \text{internal} \\  \text{edges of $\Gamma$}  } \, X\otimes X  \right) \ar{ll}{\varphi_1,\varphi_2}
\end{tikzcd} 
\end{equation}
Here the right vertical arrow uses the coevaluation,  and the lower horizontal arrow uses the identifications induced by 
$\varphi_1$ and $\varphi_2$. In order to construct the latter map, recall that by definition $\varphi_1$ and $\varphi_2$ tell us how to identify the legs of $ T'$ and the pairs of edges obtained by cutting at the internal edges of $\Gamma$ with the legs of $\sqcup_{i=1}^{k_1} T^{(i)}$. 

The symmetry of $\Delta$ ensures that this family \eqref{axufamilymapseqn0}
of functors is compatible in the sense that they descend to the 2-colimit used to define unordered monoidal products.
Hence, they provide maps
\begin{align} \label{eqnpreresultunordered0}  {X^{\otimes \Legs(T') }}  \longrightarrow   \bigotimes_{i=1}^{k_1} X^{\otimes \Legs(T^{(i)})} \ .  \end{align}
We now define the functor $\End_\kappa^X(\Gamma, \varphi_1,\varphi_2)$ 
by
\begin{align}
\End_\kappa^X(\Gamma, \varphi_1,\varphi_2) :  \prod_{i=1}^{k_1} \cat{M}\left( X^{\otimes \Legs(T^{(i)})},I \right) \to \cat{M} \left(   \bigotimes_{i=1}^{k_1} X^{\otimes \Legs(T^{(i)})} ,  I   \right) \to     \cat{M}(   X^{\otimes \Legs(T') },I  ) \ ,
\end{align}
where the first map is induced by the tensor product in $\cat{M}$, and the second is the precomposition
with~\eqref{eqnpreresultunordered0}.

As one verifies directly, these assignments define the desired  modular endomorphism operad in the sense that the following statement holds:
\begin{proposition}\label{propendoperad}
	Let $\cat{M}$ be a  symmetric monoidal category and $\kappa$ be a non-degenerate symmetric pairing on $X\in\cat{M}$. Then $\End_\kappa^X : \Graphs \to \Cat$ is a symmetric monoidal functor, i.e.\ a modular operad in $\Cat$. We call this modular operad the modular endomorphism operad of $(X,\kappa)$.
	Furthermore, $\End_\kappa^X$ admits an operadic unit given by the pairing $\kappa \in \End_\kappa^X(T_1)$.
\end{proposition}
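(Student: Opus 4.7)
The plan is to verify the three defining properties of a symmetric monoidal functor to the bicategory $\Cat$: well-definedness on equivalence classes of graph-morphisms, pseudo-functoriality (preservation of identities and composition up to coherent invertible 2-cell), and compatibility with the disjoint-union monoidal structure and its symmetry.

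First, I would address well-definedness. An isomorphism $\psi\colon \Gamma\to \Gamma'$ of graphs that intertwines $(\varphi_1,\varphi_2)$ with $(\varphi_1',\varphi_2')$ acts by permutations on the set of internal edges and on their endpoints. The corresponding permutation of the tensor factors $X\otimes X$ inserted by the coevaluations is absorbed by the symmetry isomorphism $\Sigma_\Delta$ of Remark~\ref{remsymconcrete}, and diagram \eqref{Eq:compatability symmetry} together with the coherence of $\Sigma_\Delta$ ensures that any two choices of $\psi$ yield the same induced natural isomorphism between the two candidate functors. In the same step I would check that the construction descends from the ordered tensor products entering \eqref{axufamilymapseqn0} to the unordered products $X^{\otimes \Legs(T)}$: changing the chosen orderings replaces the family \eqref{axufamilymapseqn0} by one related through the universal permutation isomorphisms of the 2-colimit defining $X^{\otimes \Legs(T)}$, and the compatibility with the coevaluation at each internal edge is again controlled by $\Sigma_\Delta$.

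Second, I would verify pseudo-functoriality. The identity morphism on $\gamma$ is represented by the trivial graph (no internal edges and $\varphi_1,\varphi_2$ both identities), for which the construction collapses to the identity functor. For composition, given $\Gamma_1\colon \gamma_0\to\gamma_1$ and $\Gamma_2\colon \gamma_1\to\gamma_2$, the internal edges of $\Gamma_2\circ \Gamma_1$ are the disjoint union of those of $\Gamma_1$ and $\Gamma_2$, and the identifications $(\varphi_1,\varphi_2)$ of the composite decompose correspondingly. Inserting the coevaluations $\Delta$ one family at a time and using the snake isomorphisms of Remark~\ref{coevaluationrem} yields a canonical invertible 2-cell
\begin{align}
\End_\kappa^X(\Gamma_2\circ \Gamma_1)\;\cong\;\End_\kappa^X(\Gamma_2)\circ\End_\kappa^X(\Gamma_1) \ .
\end{align}
Coherence of these 2-cells (i.e.\ the associativity pentagon for iterated composition of three graphs) reduces to the coherence of snake isomorphisms, which by the cited result on coherence of duals can be arranged canonically.

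Third, the symmetric monoidal structure is immediate by construction: on objects, $\End_\kappa^X$ sends a disjoint union of corollas to the tensor product of the corresponding morphism categories, because $X^{\otimes \Legs(T\sqcup T')}\simeq X^{\otimes \Legs(T)}\otimes X^{\otimes \Legs(T')}$. On morphisms, a graph that is itself a disjoint union is assigned the tensor product of the two induced functors, with the unordered monoidal product and the symmetry in $\cat{M}$ providing the coherence data that realise the braiding of $\Graphs$.

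The main technical obstacle will be the systematic bookkeeping of the coherence 2-cells, in particular ensuring that the associator and unitor 2-cells satisfy the pentagon and triangle axioms for a symmetric monoidal functor of bicategories. No genuinely new 2-dimensional algebra is required: everything is forced by the coherence of snake isomorphisms, the coherence of $\Sigma_\kappa,\Sigma_\Delta$, and the symmetric monoidal coherence of $\cat{M}$. Consequently the verification, while lengthy, is a standard diagrammatic check once the ingredients are in place.
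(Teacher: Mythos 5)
Your overall plan is the right one and is essentially what the paper does (the paper offers no written proof beyond the construction and the phrase ``as one verifies directly''): check descent to the unordered monoidal products and well-definedness on equivalence classes via the symmetry $\Sigma_\Delta$ and \eqref{Eq:compatability symmetry}, check pseudo-functoriality, and check compatibility with disjoint union. The first of these steps is correctly identified in your sketch.

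However, the mechanism you propose for pseudo-functoriality is wrong. The functor $\End_\kappa^X(\Gamma)$ is precomposition with a 1-morphism built \emph{only} from coevaluations $\Delta$ (one per internal edge), unitors and the permutation/identification isomorphisms coming from $\varphi_1,\varphi_2$; the pairing $\kappa$ itself never occurs in these structure maps. Hence in comparing $\End_\kappa^X(\Gamma_2)\circ\End_\kappa^X(\Gamma_1)$ with $\End_\kappa^X(\Gamma_2\circ\Gamma_1)$ there is no evaluation meeting a coevaluation, so the snake isomorphisms of Remark~\ref{coevaluationrem} cannot produce the compositor 2-cell, and its coherence does not ``reduce to the coherence of snake isomorphisms.'' The compositor is assembled from the associator, unitor and symmetry constraints of $\cat{M}$, their naturality, and the interchange law, and its coherence follows from coherence for symmetric monoidal bicategories. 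This is exactly why the paper can remark, right after the proposition, that non-degeneracy of $\kappa$ --- i.e.\ the very existence of snake data --- is \emph{not} needed for $\End_\kappa^X$ to be a symmetric monoidal functor, but only for the comparison with the traditional endomorphism operad; an argument whose coherence rests on snakes contradicts that. A related slip: your justification of monoidality on objects is a non sequitur. The value on a disjoint union of corollas is by construction the product $\cat{M}\bigl(X^{\otimes\Legs(T)},I\bigr)\times\cat{M}\bigl(X^{\otimes\Legs(T')},I\bigr)$; it is not deduced from $X^{\otimes\Legs(T\sqcup T')}\simeq X^{\otimes\Legs(T)}\otimes X^{\otimes\Legs(T')}$, and indeed $\cat{M}(A\otimes B,I)$ is in general not equivalent to $\cat{M}(A,I)\times\cat{M}(B,I)$, so that route would fail if taken literally.
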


By restricting $\End_\kappa^X$ to $\Forests$ 
we get the \emph{cyclic endomorphism operad} of $(X,\kappa)$.
By further pulling back to $\RForests$ we find the endomorphism 
operad of $(X,\kappa)$.
It is important to note that for $\End_\kappa^X$
to be a symmetric monoidal functor, non-degeneracy of $\kappa$ is not needed. It is needed, however, to identify the restriction of $\End_\kappa^X$ with the endomorphism operad in the `traditional sense'. Indeed, the natural map 
		\begin{align} 
		\tikzfig{relation_usual_endomorphism} \ ,
		\end{align}  
		induced by a choice of root provides an equivalence between $\End_\kappa^X(T_{n})$ and $\mathcal{M}(X^{\otimes n},X)$. Therefore, we insist on non-degeneracy in Proposition~\ref{propendoperad}.

\subsection{Cyclic and modular algebras\label{seccycmodalg}}
Using the cyclic and modular endomorphism operad, one defines cyclic and modular algebras, respectively:
		
\begin{definition}\label{defmodalgM}
	Let $\mathcal{O}$ be a modular operad in $\Cat$ and $\cat{M}$ any symmetric monoidal bicategory.
	A \emph{modular $\mathcal{O}$-algebra} $A$ in  $\cat{M}$  
	is an object 
	$X\in\cat{M}$ together with the choice of a non-degenerate symmetric pairing $\kappa$ on $X$ and
	a symmetric monoidal transformation $A: \mathcal{O} \to \End_\kappa^X$ of symmetric monoidal 
	functors $\Graphs \to \Cat$.
	We denote the components of a modular algebra $A$ by $A_T:\mathcal{O}(T)\to\End_\kappa^X(T)$ for $T\in \Graphs$ and its naturality isomorphisms
	for every morphism $\Gamma:T\to T'$ by
	\begin{equation}
		\begin{tikzcd}
			\mathcal{O}(T) \ar[dd,swap,"\mathcal{O}(\Gamma)"]  \ar[]{rr}{A_T } & &   \End_\kappa^X(T)  \ar{dd}{     \End_\kappa^X(\Gamma)  }  \ar[lldd, Rightarrow,"A_\Gamma"]  \\
			& & \\
			\mathcal{O}(T') \ar[swap]{rr}{A_{T'}} & &  \End_\kappa^X(T') \ .
		\end{tikzcd} 
	\end{equation}
	 If $\cat{O}$ admits a unit $(1_\mathcal{O},1_\Gamma)$, we additionally require $A$ to be equipped with compatibility data in the form of a
	natural isomorphism
	\begin{equation}
		\begin{tikzcd}[row sep=0.4cm] 
			& \mathcal{O}(T_1) \ar[dd, "A_{T_1}"]  \\ 
			\star \ar[ru, "1_\mathcal{O}"] \ar[rd,"1_{\End_\kappa^X}=\kappa",swap,""{name=U}] & \\ 
			\  & \End_\kappa^X(T_1)
			\ar[from=1-2, to=U, shorten <=5, shorten >=5, Rightarrow, "1_A"]
		\end{tikzcd}
	\end{equation}
	such that 
	\begin{equation} 
		\begin{tikzcd}[row sep=0.4cm] 
			& \mathcal{O}(T_1) \ar[dd, "A_{T_1}"]  \ar[r,"\mathcal{O}(\tau_1)"] & \mathcal{O}(T_1) \ar[ldd, Rightarrow, shorten <=5, shorten >=5,"A_{\tau_1}"] \ar[dd, "A_{T_1}"] \\ 
			\star \ar[rru, bend left =65, ""{name=S}, "1_\mathcal{O}"] \ar[ru, "1_\mathcal{O}"] \ar[rd,"1_{\End_\kappa^X}",swap,""{name=U}] & \\ 
			\  & \End_\kappa^X(T_1) \ar[r,"\End_\kappa^X(\tau_1)",swap] & \End_\kappa^X(T_1)  
			\ar[from=1-2, to=U, shorten <=5, shorten >=5, Rightarrow,"1_A"]
			\ar[from=S, to=1-2, shorten <=3, shorten >=3, Rightarrow,swap,"\cong\ \ "]
		\end{tikzcd}
		= 
		\begin{tikzcd}[row sep=0.4cm]
			& \mathcal{O}(1) \ar[dd, "A_{T_1}"]  \\ 
			\star\ar[dd, "1_{\End_\kappa^X}",swap] \ar[ru, "1_\mathcal{O}"] \ar[rd,"1_{\End_\kappa^X}",swap,""{name=U}] & \\ 
			\  & \End_\kappa^X(1)\ 
			\ar[from=1-2, to=U, shorten <=5, shorten >=5, Rightarrow] \\
			\End_\kappa^X(1) \ar[ru,swap, "\End_\kappa^X(\tau_1)"] &
			\ar[from=U, to=4-1, shorten <=12, shorten >=5, Rightarrow]
		\end{tikzcd}
	\end{equation}
and for all morphisms $\Gamma \colon T_1\sqcup T_n \to T_n$ in $\Graphs$
\begin{equation}
\begin{tikzcd}
 & \ar[d, Rightarrow, "1_\Gamma"] & \\
\star\times \cat{O}(T_n) \ar[d, swap, "\star\times A_{T_n}"] \ar[r, "1_{\cat{O}}"] \ar[rrr, bend left=25, "\simeq"] & \mathcal{O}(T_1)\times \cat{O}(T_n) \ar[ld, Rightarrow, shorten <=5, shorten >=5, "1_A\times A", swap]  \ar[rr, "\cat{O}(\Gamma)"] \ar[d,  "A_{T_1}\times A_{T_n}"] & &	\cat{O}(T_n) \ar[d,  "A_{T_n}"] \ar[lld, Rightarrow, shorten <=5, shorten >=5, "A_\Gamma", swap] \\ 
\star\times \End_\kappa^X(T_n) \ar[r, "1_{\End_\kappa^X}", swap] \ar[rrr, bend right=25, "\simeq", swap] & \End_\kappa^X(T_1)\times \End_\kappa^X(T_n) \ar[rr, "\End_\kappa^X(\Gamma)",swap] &	& \End_\kappa^X(T_n) \\ 
& \ar[u, Leftarrow, "(1^{\End_\kappa^X}_\Gamma)^{-1}", swap]  &
\end{tikzcd}
= \begin{tikzcd}
\star\times \cat{O}(T_n) \ar[d, swap, "1\times A_{T_n} "] \ar[r, "\simeq"]  &  \cat{O}(T_n) \ar[ld, Rightarrow, shorten <=5, shorten >=5, "\cong", swap]  \ar[d,  "A_{T_n}"] \\ 
\star\times \End_\kappa^X(T_n) \ar[r, "\simeq", swap]  &  \End_\kappa^X(T_n) \ . 
\end{tikzcd} 
\end{equation}
Again, cyclic algebras over cyclic operads are defined analogously by replacing $\Graphs$ with $\Forests$.
\end{definition}

\begin{remark}\label{remarkphi2}
The compatibility condition in Definition~\ref{defmodalgM} with respect to operadic units might seem to
depend on a specific choice of a unit for $\mathcal{O}$ and to consist of additional structure. However, 
note that for another choice of unitality data $(1'_\cat{O},1'_\Gamma, 1'_A)$, we have
\begin{equation}
	\begin{tikzcd}[row sep=0.4cm] 
	& \mathcal{O}(T_1) \ar[dd, "A_{T_1}"]  \\ 
	\star \ar[ru, "1'_\mathcal{O}", bend left = 90, ""{name=A}] \ar[ru, "1_\mathcal{O}", ""{name=B}] \ar[rd,"1_{\End_\kappa^X}",swap,""{name=U}] & \\ 
	\  & \End_\kappa^X(T_1)
	\ar[from=1-2, to=U, shorten <=5, shorten >=5, Rightarrow, "1_A"]
	\ar[from=A, to=B, shorten <=5, shorten >=5, Rightarrow,"\cong"]
\end{tikzcd} =
	\begin{tikzcd}[row sep=0.4cm] 
	& \mathcal{O}(T_1) \ar[dd, "A_{T_1}"]  \\ 
	\star \ar[ru, "1'_\mathcal{O}"] \ar[rd,"1_{\End_\kappa^X}",swap,""{name=U}] & \\ 
	\  & \End_\kappa^X(T_1) \ , 
	\ar[from=1-2, to=U, shorten <=5, shorten >=5, Rightarrow, "1'_A"]
\end{tikzcd}
\end{equation}
where the unlabeled 2-isomorphism is the one mentioned in Remark~\ref{Rem: Uniqunes of units}. In this sense, being 
compatible with operadic units is a property of a symmetric monoidal natural transformation $A\colon \cat{O} \to \End_\kappa^X$.
\end{remark}

\begin{remark}\label{remcatdual}
	The symmetric monoidal bicategory $\cat{M}$ in Definition~\ref{defmodalgM} can be arbitrary. The case $\cat{M}=\Cat$ (with the `usual' symmetric monoidal structure from Example~\ref{exsymmoncat}) can, of course, be considered, but it is not interesting because a non-degenerate symmetric pairing $\cat{C}\times \cat{C}\to \star$ exists if and only if $\cat{C}\simeq \star$.
	The situation will be significantly more interesting in the example $\cat{M}=\Fin$, see Section~\ref{secfinpairing}.
	\end{remark}

In the approach to operads chosen in this paper, the definition of morphisms between algebras requires some care. Na\" ively, one might try to define them
as symmetric monoidal modifications. However, this does not relate algebra
structures on \emph{different} underlying objects in $\cat{M}$. 
We sketch a construction of the bicategory $\ModAlg(\mathcal{O})$ for a modular
operad $\mathcal{O}$ using the (symmetric monoidal) arrow category $\operatorname{ar}(\cat{M}) \coloneqq [ 0\to 1, \cat{M}]$, i.e.\
the  category 
of functors from the interval category $0\to 1$ with two objects $0$ and $1$ and one non-identity morphism to $\cat{M}$.  Since the bicategory of algebras will not play an essential role throughout the paper, we will limit the exposition to the very essential points and omit some details. The construction we give will just be spelled out for modular operads, but can be easily transferred to 
 cyclic and ordinary operads.

\begin{definition}
Let $(X,\kappa)$ and $(X',\kappa')$ be objects of a symmetric monoidal bicategory $\cat{M}$ equipped with 
non-degenerate symmetric pairings. 
A \emph{morphism compatible with the pairings} $f\colon (X,\kappa) \longrightarrow (X',\kappa') $ is a morphism $f\colon X \longrightarrow X'$
plus the structure required to make 
\begin{equation}\label{eqnpairinginarrow}
\begin{tikzcd}
X\otimes X \ar[r,"f\otimes f"] \ar[d,"\kappa",swap] & X'\otimes X' \ar[d, "\kappa'"] \\
I \ar[r,"\id_I", swap] & I
\end{tikzcd}
\end{equation}
into a symmetric pairing in $\operatorname{ar}(\cat{M})$ on $f$. 
\end{definition}

Now let $f\colon (X,\kappa) \longrightarrow (X',\kappa')$
be a morphism compatible with non-degenerate symmetric pairings
$\kappa$ and $\kappa'$ on $X$ and $X'$, respectively.
 Then we can associate the endomorphism operad $\End^f\colon \Graphs \longrightarrow \Cat$
 to the pairing \eqref{eqnpairinginarrow} in the arrow category. It fits into
a span
\begin{equation}
\begin{tikzcd}
 & \End^f \ar[rd] \ar[ld] & \\
 \End^X_\kappa & & \End^{X'}_{\kappa'}  
\end{tikzcd}
\end{equation}
of $\Cat$-valued modular operads.

\begin{definition}
	For a $\Cat$-valued modular operad $\mathcal{O}$,
	the bicategory $\ModAlg(\mathcal{O})$ of modular $\mathcal{O}$-algebras in a symmetric monoidal bicategory $\cat{M}$ is defined as follows:
	Objects are modular $\mathcal{O}$-algebras in $\cat{M}$ in the sense of Definition~\ref{defmodalgM}.
A \emph{1-morphism of modular algebras} $f\colon A \longrightarrow B$ consists of a morphism $f\colon (A,\kappa_A) \longrightarrow (B,\kappa_B)$ together with a symmetric monoidal transformation $\mathcal{O}\to\End^f$ and a filling of the diagram
\begin{equation}
\begin{tikzcd}
&& \mathcal{O} \ar[lddd, "A" , swap] \ar[rddd,"B"]  \ar[dd, dotted]& & & \\ \\ 
 & & \End^f \ar[rd] \ar[ld] & \\
 & \End^A_{\kappa_A} & & \End^{B}_{\kappa_B} 
\end{tikzcd}
\end{equation} 
with natural symmetric monoidal isomorphisms.
We define 2-morphisms of modular algebras 
similarly using spans of spans of modular 
operads constructed from pairing preserving 2-morphisms in $\cat{M}$ and 
the endomorphism operad in $\operatorname{ar}(\operatorname{ar}(\cat{M}))$.  
\end{definition}

Morphisms between algebras over cyclic and modular operads are always equivalences:
\begin{proposition}
Let $\mathcal{O}$ be a modular operad. The bicategory $\ModAlg \mathcal{O}$ is a 2-groupoid. Similarly, the bicategory of 
cyclic algebras over a cyclic operad is a 2-groupoid. 
\end{proposition}
\begin{proof}
We  give the proof for the modular case. The cyclic case works completely analogously.  
Let $f\colon (A,\kappa_A) \to (B,\kappa_B) $ be a morphism of modular $\mathcal{O}$-algebras. We define an inverse to $f$ by 
\begin{align}
\tikzfig{Inverse_cyclic_morphism} 
\end{align}
It is straightforward to verify that this defines a weak inverse for $f$ in $\cat{M}$
 using the snake isomorphisms and the 2-isomorphisms $ \kappa_B \circ 
(f\otimes f) \cong \kappa_A $ and $(f\otimes f) \circ \Delta_A \cong \Delta_B $. These isomorphisms exist by definition because $f$ 
is compatible with the pairings. The inverse can be equipped with the additional structure needed to make it into a 
morphism of modular algebras. This shows that every 1-morphism is weakly invertible. 

Similarly, for a 2-morphism $\eta \colon f\to g$ between 1-morphisms $f,g \colon (A,\kappa_A) \to (B,\kappa_B) $ of modular algebras, we can construct an inverse as follows:
\begin{align}
	\tikzfig{Inverse_cyclic_2_morphism} 
\end{align}
\end{proof}

We call a morphism of modular operads an \emph{equivalence} if it is an equivalence arity-wise (the same definition is made for cyclic and ordinary operads).
It is not clear that equivalent modular operads give rise to equivalent categories of  modular algebras.
For ordinary operads, such results are proven in \cite[Theorem~4.1]{bm} under the name \emph{Comparison Theorem}.
We will establish such a result for modular algebras over modular operads with values in bicategories. 
To this end, we will need the following, probably well-known result:

\begin{proposition}[Whitehead's Theorem for symmetric monoidal functors between symmetric monoidal bicategories]\label{propwhitehead}
	Let $F , G : \cat{C} \to \cat{D}$ be symmetric monoidal functors between symmetric monoidal bicategories.
	Then every symmetric monoidal equivalence $\alpha$ from $F$ to $G$ has a weak inverse, i.e.\
	there is a symmetric monoidal transformation $\beta$ from $G$ to $F$ such that $\alpha \beta$ and $\beta \alpha$ are the identity transformation of $G$ and $F$, respectively, up to invertible monoidal modification. 
\end{proposition}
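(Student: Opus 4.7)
The plan is to construct $\beta$ pointwise from local adjoint equivalences, and then to transport the required naturality and monoidal structure from $\alpha$ to $\beta$ by pre- and post-composing with $\alpha$ and using the triangle identities to cancel. First, for each object $X \in \cat{C}$, the component $\alpha_X : F(X) \to G(X)$ is by hypothesis an equivalence in $\cat{D}$, so it can be promoted to an adjoint equivalence $(\alpha_X, \beta_X, \eta_X, \epsilon_X)$ with invertible unit $\eta_X : \id_{F(X)} \Rightarrow \beta_X \alpha_X$ and counit $\epsilon_X : \alpha_X \beta_X \Rightarrow \id_{G(X)}$, using the standard bicategorical fact that every equivalence lifts to an adjoint equivalence. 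These are the candidate components of $\beta$.

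Second, for a 1-morphism $f : X \to Y$ in $\cat{C}$, I would define the naturality 2-cell $\beta_f : F(f)\beta_X \Rightarrow \beta_Y G(f)$ by conjugating the naturality 2-cell $\alpha_f$ of $\alpha$ through $\beta_X$ on the right and $\beta_Y$ on the left, absorbing the extra $\alpha\beta$-composites via $\eta_Y^{-1}$ and $\epsilon_X$. The pseudo-naturality axioms (compatibility with horizontal composition of 1-morphisms and with 2-cells) then reduce, in each instance, to a diagram chase using the triangle identities and the corresponding axioms for $\alpha$.

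Third, for the symmetric monoidal structure, $\alpha$ is equipped with invertible monoidal structure 2-cells relating $(\alpha_X \otimes \alpha_Y)$ and the coherence data of $F$ and $G$ (and an analogous unit cell). I would define the corresponding monoidal structure 2-cells for $\beta$ by the same conjugation recipe: pre- and post-compose with $\beta_{X\otimes Y}$, $\beta_X\otimes \beta_Y$, and cancel using the units and counits. The conditions for a symmetric monoidal transformation — coherence with the associators, unitors and the braidings — transfer from the corresponding conditions for $\alpha$ by the triangle identities. Finally, the pointwise counit $\epsilon$ and the inverse $\eta^{-1}$ assemble into the invertible monoidal modifications $\alpha\beta \Rrightarrow \id_G$ and $\id_F \Rrightarrow \beta\alpha$ required by the statement.

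The main obstacle will be the bookkeeping of coherence 2-cells: the monoidal transformation axioms and the symmetry axiom each involve pasting diagrams that grow once $\alpha$ is replaced by the conjugate of $\beta$. This can be organized efficiently by the slogan that adjoint equivalences transport operadically-expressible structure; alternatively one can first verify that symmetric monoidal pseudofunctors, symmetric monoidal transformations and monoidal modifications form a bicategory in which a pointwise equivalence is an equivalence, and then read the proposition off that bicategorical fact. Either route reduces all remaining checks to pasting identities that commute by the triangle identities together with the axioms already satisfied by $\alpha$.
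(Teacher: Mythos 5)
Your proposal is correct and follows essentially the same route as the paper's own (sketched) proof: promote each component $\alpha_c$ to an adjoint equivalence, take the weak inverses $\beta_c$ as the components of $\beta$, and use the units and counits to transport the naturality and symmetric monoidal coherence data and to verify that $\beta$ is a weak inverse. Your version simply spells out in more detail the conjugation construction that the paper leaves implicit.
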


In lack of a reference, we at least sketch the proof:

\begin{proof}[\textsl{Sketch of the proof}] 
	The symmetric monoidal equivalence $\alpha$ consists of components $\alpha_c : F(c)\to G(c)$ for $c\in \cat{C}$ plus coherence data:
	\begin{enumerate}[label=(\roman*)]
	\item An invertible 2-morphism $\alpha_f \colon G(f)\alpha_c\to \alpha_{c'}F(f)$ for all 1-morphisms $f:c\to c'$ in $\cat{C}$.\label{cohtypei}
	\item A 2-isomorphism \label{cohtypeii}
		\begin{equation}
		\begin{tikzcd}
		& F(I_\cat{C}) \ar[dd, "\alpha_{I_\cat{C}}"] \\
		I_\cat{D}\ar[ru,"\simeq"] \ar[rd,swap,"\simeq"] \ar[r, Leftarrow, "\alpha_{I_\cat{D}}"] & \ \\ 
		& G(I_\cat{C}) \ , 
	\end{tikzcd}
		\end{equation}
	where $I_\cat{C}$ and $I_\cat{D}$ is the monoidal unit of $\cat{C}$ and $\cat{D}$, respectively, and the unlabeled 1-equivalences are part of the coherence data for $F$ and $G$.
	\item 2-Isomorphisms 
		\begin{equation}
		\begin{tikzcd}
			& F(c\otimes_\cat{C} c')  \ar[rd, "\alpha_{c\otimes_\cat{C} c'}"] &  \\
		F(c)\otimes_\cat{D} F(c') \ar[d, " \alpha_c \otimes_\cat{D} \id_{F(c')} ", swap] \ar[ru,"\simeq"] & & G(c\otimes_\cat{C} c') \\ 
		G(c)\otimes_\cat{D} F(c') \ar[rr, "\id_{G(c)}\otimes_\cat{D} \alpha_{c'} ",swap] & \ar[uu, Rightarrow, "{\alpha_{c,c'}}", shorten <=5, shorten >=5] & G(c)\otimes_\cat{D} G(c')  \ar[u,swap,"\simeq"]
		\end{tikzcd}
	\end{equation}
	for all objects $c,c' \in \cat{C}$. The unlabeled arrows are again the coherence morphisms for $F$ and $G$.\label{cohtypeiii}
\end{enumerate}
This data is subject to the coherence conditions that are given in~\cite[Definition 2.7]{schommerpries}.

By assumption $\alpha_c$ is an equivalence for every $c\in\cat{C}$. Hence, we find a weak inverse 
$\beta_c : G(c)\to F(c)$ that we may choose to be additionally adjoint to $\alpha$. The unit and counit 
of this adjunction for varying $c\in \cat{C}$ can be used to equip $\beta$ with the necessary coherence 
data (of the type \ref{cohtypei}-\ref{cohtypeiii} as above for $\alpha$): 
\begin{enumerate}[label=(\roman*)]
	\item For a morphism $f\colon c \to c'$, the needed invertible 2-morphism $\beta_f:F(f)\beta_c \to \beta_{c'}G(f)$ is given by 
\begin{equation}
\begin{tikzcd}
G(c) \ar[rr, "\beta_c"] \ar[ddd, "G(f)", swap] \ar[rd, "\id", swap] & & F(c) \ar[ld, "\alpha_c"] \ar[d, "F(f)"] \ar[ddd, "F(f)", bend left=60] \ar[dll, Rightarrow,   shorten <=15, shorten >=50] \\
\ & \ar[ld, Rightarrow,   shorten <=10, shorten >=10] G(c) \ar[d,"G(f)", swap] & \ar[l, Rightarrow, "\alpha_f^{-1}"] F(c') \ar[dd, "\id_{F(c)}"]  \ar[ld, "\alpha_{c'}"]    & \ar[ld, Rightarrow,   shorten <=15, shorten >=15]              \\
\ & G(c') \ar[d, Rightarrow,   shorten <=5, shorten >=9] \ar[rd, "\beta_{c'}"]&     \    \ar[l, Rightarrow,   shorten <=5, shorten >=5]            \\    
G(c') \ar[ru, "\id"] \ar[rr,swap, "\beta_{c'}"] & \ & F(c') \ , 
\end{tikzcd}
\end{equation} 
where the unlabeled 2-morphisms are
all 2-isomorphisms and arise as
 coherence data
or through
the unit and counit of the pair $(\alpha_{I_\cat{C}} , \beta_{I_\cat{D}})$
of adjoint equivalences
(all 2-morphisms in this proof will be invertible, but we will omit the symbol `$\cong$' in the diagrams to save space).
	\item The coherence  data 
	of type~\ref{cohtypeii}
	is given by
	\begin{equation}
		\begin{tikzcd}
		& G(I_\cat{C}) \ar[rdd, "\beta_{1_\cat{C}}"] & \ar[ldd, Rightarrow,   shorten <=45, shorten >=5] \\
		I_\cat{D}\ar[ru,"\simeq"] \ar[rrd, bend right =90,swap,"\simeq"] \ar[rd,"\simeq"]  \ar[r, Leftarrow, "\alpha_{I_\cat{D}}"] & \  \\ 
		& F(I_\cat{C}) \ar[r, "\id"] \ar[uu, "\alpha_{I_\cat{C}}", swap] \ar[r]  & F(I_\cat{C}) \ ,  \\
		& \ar[u, Leftarrow,   shorten <=2, shorten >=2] &
	\end{tikzcd}
	\end{equation}
	where the unlabeled 1-equivalences are part of the coherence data for $F$ and $G$ and the
	unlabeled 2-isomorphisms are the counit of the adjoint equivalence and the coherence
	isomorphisms $f\circ \id\cong f$ for the composition with identities in the bicategory $\cat{C}$.

	\item The 2-isomorphisms of type~\ref{cohtypeiii} are given by (we suppress tensoring with identities in the notation for 1-morphisms and 2-morphisms to ensure readability)
	\begin{equation}
		\begin{tikzcd} 
		& \ar[dd, Leftarrow,   shorten <=25, shorten >=5] & G(c\otimes_\cat{C} c')\ar[rrd, "\beta_{c\otimes c'}"] & & \\
		 G(c) \otimes_\cat{D} G(c') \ar[rru,"\simeq"] \ar[rd, "\id"] \ar[ddd, "\beta_{c} ", swap] \ar[ddr, " \beta_{c'}", swap, bend right=20] & &\ & & F(c\otimes_\cat{C} c') \ar[ll, Rightarrow,   shorten <=5, shorten >=95] \\
		  \ &  G(c) \otimes_\cat{D} G(c') \ar[l, Leftarrow,   shorten <=5, shorten >=25] \ar[uur,"\simeq"] &\ & \ & F(c\otimes_\cat{C} c') \ar[lluu, "\alpha_{c\otimes c'}"] \ar[u, "\id"] \ar[lll, Rightarrow, "\alpha_{c,c'}^{-1}", swap,  shorten <=10, shorten >=10] \\ 
		 \ & G(c) \otimes_\cat{D} F(c') \ar[ru, Leftarrow,   shorten <=5, shorten >=30] \ar[ld, Leftarrow,   shorten <=5, shorten >=5] \ar[rr, "\id"] \ar[u, "\alpha_{c'}"] \ar[rrrd, "\beta_{c}", swap] & \ & G(c) \otimes_\cat{D} F(c') \ar[llu, "\alpha_{c'}", swap] & \ & \ar[l, Rightarrow,   shorten <=30, shorten >=2] \\
		  F(c) \otimes_\cat{D} G(c') \ar[rrrr, "\beta_{c'}",swap] & & & & F(c)\otimes_\cat{D} F(c') \ ,  \ar[llu, Rightarrow,   shorten <=35, shorten >=35] \ar[lu, "\alpha_c", swap] \ar[uu,"\simeq"] \ar[uuu, bend right=60]
		\end{tikzcd}
	\end{equation}
	where the unlabeled 2-isomorphisms are coherence data of $F$ and $G$
	or the unit and counit of the adjoint equivalences $\alpha$ and $\beta$ evaluated at various objects.
\end{enumerate}
Now we consider the composition $ \beta \circ \alpha$. The adjunction data combines into an invertible 
symmetric monoidal modification (see~\cite[Definition 2.8]{schommerpries} for a definition)
 $\beta\circ \alpha \to \id_F$, i.e.\
coherent isomorphisms $\beta_c \circ \alpha_c \to \id_c $ for all $c\in \cat{C}$. Similarly, we get 
an invertible symmetric monoidal modification $\id_G \to \alpha \circ \beta$. 
\end{proof}

If we are given  a map $\Phi : \mathcal{O} \to \P$ of $\Cat$-valued modular operads, we obtain a functor $\Phi^*:\ModAlg \P\to\ModAlg \mathcal{O}$ for the category of modular algebras with values in a symmetric monoidal bicategory $\cat{M}$, namely by precomposition with $\Phi$.
If $\Phi$ is an equivalence, we may find 
thanks to Proposition~\ref{propwhitehead}
a weak inverse $\Psi : \P\to \mathcal{O}$. This gives us a functor $\Psi^* : \ModAlg \mathcal{O}\to\ModAlg \P$ --- again by precomposition --- that can easily be seen to be a weak inverse for $\Phi^*$. This leads to the desired \emph{Comparison Theorem} for modular and cyclic operads (in a bicategorical context):

\begin{theorem}[Comparison Theorem]\label{comparisonthm}
	Any equivalence $\Phi : \mathcal{O} \to \P$ of modular $\Cat$-valued operads 
	induces by precomposition an equivalence $\Phi^*:\ModAlg \P \to \ModAlg \mathcal{O}$ between the 2-groupoids of modular algebras with values in any symmetric monoidal bicategory $\cat{M}$. An analogous statement holds for cyclic operads and ordinary operads.
\end{theorem}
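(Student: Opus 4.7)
The proof is largely sketched in the paragraph immediately preceding the theorem, so the task here is to flesh out that sketch into a complete argument. The plan is to reduce the statement to Whitehead's Theorem for symmetric monoidal functors between symmetric monoidal bicategories (Proposition~\ref{propwhitehead}) and then verify that precomposition is well-behaved on the full bicategorical structure of $\ModAlg$, not merely on objects.

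First I would apply Proposition~\ref{propwhitehead} to the equivalence $\Phi:\mathcal{O}\to\P$ (viewed as a symmetric monoidal functor between symmetric monoidal bicategories, where $\mathcal{O}$ and $\P$ are regarded as symmetric monoidal functors on $\Graphs$) to obtain a weak inverse $\Psi:\P\to\mathcal{O}$ together with invertible monoidal modifications $\eta:\id_\mathcal{O}\cong \Psi\Phi$ and $\epsilon:\Phi\Psi\cong \id_\P$. Precomposition then yields a functor $\Psi^*:\ModAlg\,\mathcal{O}\to\ModAlg\,\P$: given a modular $\mathcal{O}$-algebra $A:\mathcal{O}\to\End_\kappa^X$, the composition $A\circ \Psi:\P\to\End_\kappa^X$ is a modular $\P$-algebra on the same $(X,\kappa)$; the 1- and 2-morphism data of algebras (which are defined via endomorphism operads $\End^f$ in the arrow bicategory and a further arrow iteration for 2-cells) are transported by whiskering with $\Psi$.

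Next I would verify that $\Psi^*$ is a weak inverse to $\Phi^*$ in the bicategorical sense. Whiskering the monoidal modifications $\eta,\epsilon$ with a modular algebra $A$ (respectively $B$) produces invertible monoidal modifications $A\circ \Psi\circ \Phi\cong A$ and $B\circ \Phi\circ \Psi\cong B$ of symmetric monoidal transformations into the respective endomorphism operads. Since the components of $\eta$ and $\epsilon$ are invertible 2-cells in $\Cat$, their whiskers with the underlying pairing-preserving structure are invertible as well, and they automatically satisfy the coherence conditions required by the definition of 2-morphisms of modular algebras (these conditions were themselves spelled out in terms of such arrow-category data). This yields the desired invertible modifications $\Phi^*\Psi^*\cong\id_{\ModAlg\,\P}$ and $\Psi^*\Phi^*\cong\id_{\ModAlg\,\mathcal{O}}$.

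The main technical obstacle is ensuring that the bookkeeping for morphisms and 2-morphisms of algebras is genuinely preserved under precomposition, given that they were encoded through spans involving $\End^f$ in $\operatorname{ar}(\cat{M})$ and spans of spans in $\operatorname{ar}(\operatorname{ar}(\cat{M}))$. The key observation here is that the construction $f\mapsto \End^f$ is natural in a way that commutes with precomposition along an operad map: a symmetric monoidal natural transformation $\mathcal{O}\to\End^f$ pulls back along $\Psi$ to one $\mathcal{P}\to\End^f$, and the coherence fillings described in the definition of $\Alg(\mathcal{O})$ transport along $\Psi$ by whiskering. Finally, the statement for cyclic and ordinary operads follows from the identical argument with $\Graphs$ replaced by $\Forests$ or $\RForests$, since Proposition~\ref{propwhitehead} and the arrow-category construction of $\End^f$ are insensitive to which graph category underlies the operads in question.
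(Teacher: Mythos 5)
Your proposal is correct and follows essentially the same route as the paper: the paper also obtains a weak inverse $\Psi$ via Proposition~\ref{propwhitehead} and observes that precomposition with $\Psi$ gives a weak inverse to $\Phi^*$, leaving the whiskering and bookkeeping details (which you spell out) implicit. Your elaboration of how the morphism and 2-morphism data transport along $\Psi$ is consistent with the paper's construction of $\Alg(\mathcal{O})$ via $\End^f$ in the arrow categories.
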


\subsection{Non-degenerate pairings in $\Fin$\label{secfinpairing}}
In Section~\ref{seccycmodalg} we have defined
cyclic and modular algebras over a $\Cat$-valued operad.
The algebras take values in an arbitrary symmetric monoidal bicategory.
When characterizing associative and framed little disks algebras, we will also allow general symmetric monoidal bicategories for the algebras to take values in.
Only afterwards, we will specialize to a specific example of a symmetric monoidal bicategory that allows us to study applications of our results in quantum algebra, namely the symmetric monoidal bicategory $\Fin$ 
formed by finite categories, left exact functors and natural transformations (Example~\ref{exsymmoncat}). In this situation, there is an intimate relation between non-degenerate symmetric pairings and the morphism spaces that we will exploit.

A key tool for the investigation of pairings on finite categories will be the \emph{categorical Eilenberg-Watts Theorem} stated in terms of coends (we refer to \cite{fss} for an account on coends in finite categories). For the formulation of the result,
we use that $\Fin$ is enriched over itself; we denote the internal hom by $\Fin[-,-]$.

	\begin{theorem}[Fuchs-Schaumann-Schweigert \text{\cite[Theorem~3.2]{fss}}]\label{thmfromfss}
		For  finite categories $\cat{C}$ and $\cat{D}$, the functors
		\begin{align}
		\Psi : \Fin[\cat{C},\cat{D}] &\to \cat{C}^\opp \boxtimes \cat{D} \\
		F&\mapsto \int^{X \in \cat{C}^\opp} X \boxtimes F(X) \ , \\
		\Phi :  \cat{C}^\opp \boxtimes \cat{D} &\to \Fin[\cat{C},\cat{D}] \\
		X \boxtimes Y &\mapsto \cat{C}(X,-)\otimes Y \ \ ,
		\end{align}
		where we denote by a slight abuse of notation the $\Vect$-tensoring of $\cat{C}$ by $\otimes$,
		provide a pair of adjoint equivalences
		\begin{flalign}\label{quillenequiv}
		\xymatrix{
			\Psi \,:\, \Fin[\cat{C},\cat{D}] ~\ar@<0.8ex>[r]_-{\sim} & \ar@<0.8ex>[l]~ \cat{C}^\opp \boxtimes \cat{D} \,:\, \Phi \ . 
		}
		\end{flalign}
	\end{theorem}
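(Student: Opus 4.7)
My plan is to exhibit $(\Psi,\Phi)$ as an adjoint equivalence by writing down an explicit unit and counit via the coend form of the Yoneda lemma, and then to verify invertibility by reducing to a Morita-theoretic description of finite categories.

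First I would construct the counit. For $F\in\Fin[\cat{C},\cat{D}]$ and $Z\in\cat{C}$, one computes
\begin{align}
\Phi\Psi(F)(Z) \cong \int^{X\in\cat{C}^\opp}\cat{C}(X,Z)\otimes F(X),
\end{align}
which by the covariant co-Yoneda lemma is naturally isomorphic to $F(Z)$, giving a candidate counit $\varepsilon_F\colon \Phi\Psi(F)\xrightarrow{\sim}F$. Symmetrically, the unit at a generator $X\boxtimes Y$ arises from
\begin{align}
\Psi\Phi(X\boxtimes Y) \cong \int^{X'\in\cat{C}^\opp}X'\boxtimes\bigl(\cat{C}(X,X')\otimes Y\bigr) \cong \Bigl(\int^{X'}\cat{C}(X,X')\otimes X'\Bigr)\boxtimes Y \cong X\boxtimes Y,
\end{align}
where the first isomorphism uses that $\boxtimes$ is biexact (so it commutes with the coend in the first factor) and the last step is co-Yoneda applied in $\cat{C}^\opp$ to the identity. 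These extend naturally to give $\eta$ and $\varepsilon$.

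The first nontrivial issue is that $\Fin$ does not admit all small colimits, so the coend defining $\Psi$ requires justification. My approach is to fix a projective generator $P\in\cat{C}$ and invoke the Morita equivalence $\cat{C}\simeq A\text{-mod}$ for $A:=\End_\cat{C}(P)^{\opp}$. Under this identification $\cat{C}^\opp\boxtimes\cat{D}\simeq (A^\opp\otimes B)\text{-mod}$, with $B$ extracted analogously from $\cat{D}$, and a left exact $F$ is determined by $F(P)$ viewed as an $A^{\opp}\otimes B$-bimodule. The coend $\int^{X}X\boxtimes F(X)$ then collapses to this bimodule, showing that $\Psi(F)$ exists as a finite object and that $\Psi$ really takes values in $\cat{C}^\opp\boxtimes\cat{D}$.

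The main obstacle I anticipate is precisely this existence claim. Concretely, one must show that the coend admits a finite reflexive coequalizer presentation
\begin{align}
\coprod_{i,j}P_j\boxtimes\bigl(\cat{C}(P_j,P_i)\otimes F(P_i)\bigr) \rightrightarrows \coprod_{i} P_i\boxtimes F(P_i) \twoheadrightarrow \int^{X}X\boxtimes F(X),
\end{align}
indexed by a finite set of indecomposable summands of $P$, and that this coequalizer is already computed inside the finite category $\cat{C}^\opp\boxtimes\cat{D}$. Once existence is settled, the invertibility of $\eta$ and $\varepsilon$ follows from Fubini for coends and co-Yoneda, while naturality and the triangle identities are routine. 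A final consistency check is that $\Phi$ lands in $\Fin[\cat{C},\cat{D}]$, which follows because $\cat{C}(X,-)$ is left exact for every $X\in\cat{C}$ and the Kelly tensoring by $Y$ preserves left exactness.
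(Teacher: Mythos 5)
The paper does not prove this statement at all --- it imports it verbatim from \cite[Theorem~3.2]{fss}, whose argument runs through representability of left exact functors on finite categories and properties of the Deligne product --- so the only thing to assess is your proposed proof on its own terms, and it has a genuine gap exactly at the step you yourself single out as the main obstacle. You claim that a left exact $F$ is determined by $F(P)$ for a projective generator $P$, and that $\int^{X}X\boxtimes F(X)$ is computed by a coequalizer indexed by the indecomposable summands of $P$. This dualization is backwards: values on a projective generator control \emph{right} exact functors, while left exact functors are controlled by their values on an \emph{injective} cogenerator. Concretely, let $A$ be the path algebra of the quiver $\bullet\to\bullet$, $\cat{C}=A\text{-mod}$, $\cat{D}=\FinVect$, and $F=\cat{C}(S,-)$ where $S$ is the simple module that is not projective. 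Then $F$ is left exact and vanishes on every projective object, so your coequalizer over summands of $P$ yields $0$; but $\Phi(S\boxtimes k)=F$, so the theorem forces $\Psi(F)\cong S\boxtimes k\neq 0$ (and indeed the honest coend over all of $\cat{C}$ gives this, by co-Yoneda in $\cat{C}^\opp$). Your presentation therefore computes the coend of the restriction of the integrand to the projectives, not the coend over $\cat{C}$, and with it both the existence argument and the identification of $\Psi(F)$ collapse. The repair is to work with injectives, or to pass to $\Rex(\cat{C}^\opp,\cat{D}^\opp)$ and invoke classical Eilenberg--Watts there, or to use representability of left exact functors into $\FinVect$ as in \cite{dss} and \cite{fss}.

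A second, smaller gap: in the counit computation you silently commute $\Phi$ --- more precisely the left exact functor induced by $(X,Y)\mapsto\cat{C}(X,Z)\otimes Y$ --- past the coend defining $\Psi(F)$. Left exact functors do not preserve coends in general; this is precisely why \cite{fss} (and the present paper, via the left exact coend $\oint$ and results like \cite[Proposition~3.4]{fss}) must prove such interchange statements separately, so "one computes" does not suffice here. By contrast, your unit computation is essentially fine, since co-Yoneda applies directly to the functor $X'\mapsto X'\boxtimes Y$ and no interchange is actually needed; but you should also justify how isomorphisms defined on objects of the form $X\boxtimes Y$ extend to a natural isomorphism on all of $\cat{C}^\opp\boxtimes\cat{D}$, since not every object is of that form.
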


	\begin{definition}\label{defkappaopp}
		For any pairing $\kappa$ on a finite category $\cat{C}\in\Fin$, i.e.\ a left exact functor $\kappa :\cat{C}\boxtimes\cat{C}\to\FinVect$,
		 we define the functor $-^\kappa : \cat{C} \to \cat{C}^\opp$ as the composition
		\begin{align}
		-^\kappa : \cat{C} \xrightarrow{ \   X \mapsto \kappa (X,-)    \ } \widehat{\cat{C}}:=\Fin[\cat{C},\FinVect] \xrightarrow{ \  \Psi   \ }    \cat{C}^\opp \ ,
		\end{align} i.e.\ we set
		\begin{align}
		X^\kappa := \int^{Y \in \cat{C}^\opp} Y \otimes \kappa(X,Y) \quad \text{for}\quad X \in \cat{C}\ . 
		\end{align}
	\end{definition}

		By Theorem~\ref{thmfromfss} there is a canonical natural isomorphism
	$ \cat{C}(X^\kappa , -)= \Phi \Psi (\kappa(X,-))\cong \kappa(X,-)$,
	namely the inverse of the unit of the adjunction $\Psi \dashv \Phi$. This implies directly the following basic, but important fact:

	\begin{lemma}\label{lemmakappastar}
		Let $\kappa$ be a pairing on a finite category $\cat{C} \in \Fin$. Then there is a canonical natural isomorphism $\Lambda_{X,Y}: \cat{C}(X^\kappa , Y) \to \kappa(X,Y)$ for $X,Y\in\cat{C}$
	\end{lemma}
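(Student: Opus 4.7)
The plan is to unwind the definitions and invoke the Fuchs-Schaumann-Schweigert equivalence from Theorem~\ref{thmfromfss}. By construction in Definition~\ref{defkappaopp}, the object $X^\kappa\in\cat{C}^\opp$ is precisely $\Psi(\kappa(X,-))$, where we view the functor $\kappa(X,-):\cat{C}\to\FinVect$ as an object of $\Fin[\cat{C},\FinVect]$. Meanwhile, $\Phi(X^\kappa) = \cat{C}(X^\kappa,-)$ by the defining formula for $\Phi$ (with $\cat{D}=\FinVect$, one has $\cat{C}^\opp\boxtimes\FinVect\simeq\cat{C}^\opp$ and $\Phi$ sends $Z\boxtimes k\mapsto \cat{C}(Z,-)$).

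Since $\Psi\dashv\Phi$ is an adjoint equivalence, the unit $\eta:\id\Rightarrow\Phi\Psi$ is a natural isomorphism of endofunctors on $\Fin[\cat{C},\FinVect]$. Evaluated at the object $\kappa(X,-)$, this provides a canonical natural isomorphism
\begin{equation}
\eta_{\kappa(X,-)}\colon \kappa(X,-) \xrightarrow{\ \cong\ } \Phi\Psi(\kappa(X,-)) = \cat{C}(X^\kappa,-)
\end{equation}
of functors $\cat{C}\to\FinVect$. I would then define $\Lambda_{X,Y}$ as the inverse of this isomorphism, evaluated at $Y\in\cat{C}$.

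Naturality in $Y$ is immediate, because $\eta_{\kappa(X,-)}$ is a natural transformation of functors in the argument $Y$. For naturality in $X$, one uses that $X\mapsto \kappa(X,-)$ is itself a functor $\cat{C}^\opp\to \Fin[\cat{C},\FinVect]$ (since $\kappa$ is a bifunctor that is left exact in each slot), and that $X\mapsto X^\kappa = \Psi(\kappa(X,-))$ is its composite with the functor $\Psi$; naturality of $\eta$ with respect to morphisms in $\Fin[\cat{C},\FinVect]$ then yields naturality of $\Lambda_{X,Y}$ in $X$. There is no genuine obstacle here: everything reduces to the fact that $\Psi\dashv\Phi$ is an adjoint equivalence, and the only care needed is to keep track of the variance conventions when identifying $\cat{C}^\opp\boxtimes\FinVect$ with $\cat{C}^\opp$.
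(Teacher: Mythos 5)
Your proposal is correct and follows essentially the same route as the paper: the paper also observes that $\cat{C}(X^\kappa,-)=\Phi\Psi(\kappa(X,-))$ and takes $\Lambda$ to be the inverse of the unit of the adjoint equivalence $\Psi\dashv\Phi$ from Theorem~\ref{thmfromfss}. Your additional verification of naturality in $X$ is a harmless elaboration of what the paper leaves implicit.
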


	\begin{remark}\label{remsympairinghom}
		Recall from Definition~\ref{defsymmetricstructure}
		that a pairing $\kappa \colon \cat{C} \boxtimes \cat{C} \longrightarrow \FinVect $ is symmetric if it is equipped with a homotopy fixed point structure with 
	respect to the natural $\Z_2$-action, i.e.\ a natural isomorphism $\Sigma_{X,Y} \colon \kappa(X,Y) \longrightarrow \kappa(Y,X)$ squaring to 
	the identity. From Lemma~\ref{lemmakappastar}, we now get a natural isomorphism 
	\begin{align}
	\cat{C}(X^\kappa,Y) \ra{\Lambda_{X,Y}} \kappa(X,Y) \ra{\Sigma_{X,Y}}\kappa(Y,X)\ra{\Lambda_{Y,X}^{-1}} \cat{C}(Y^\kappa,X) 
	\ra{-^\kappa} \cat{C}(X^\kappa,Y^{2\kappa}) \ ,  \ \text{where} \ Y^{2\kappa}:= \left(Y^\kappa \right)^\kappa \ .
	\end{align}
Provided that $-^\kappa$ is an equivalence, the 
 Yoneda Lemma leads to a natural isomorphism $ -^{2\kappa}
	\cong \id_{\cat{C}}$. 
	\end{remark}

	\begin{remark}\label{remsnakecoev}
According to Definition~\ref{defpairing}, the fact that
	the pairing $\kappa$ exhibits $\cat{C}$ as its own dual means that there is a functor $\Delta: \FinVect \to \cat{C}\boxtimes\cat{C}$, called coevaluation, such that $\kappa$ and $\Delta$ satisfy the usual snake relations up to natural isomorphism.	
	Since we are working in $\Fin$, the functor $\Delta$ is determined by its value on the ground field $k$
	that we also denote by $\Delta \in \cat{C}\boxtimes \cat{C}$ and will refer to as the \emph{coevaluation object}.
	It will be convenient to write $\Delta = \Delta' \boxtimes \Delta''$. This notation is inspired by the Sweedler notation in the theory of Hopf algebras \cite[Notation~1.6]{kassel} and should not be understood in the sense that $\Delta$ is actually a `pure tensor'. The $\Delta'$ and the $\Delta''$ are merely placeholders for the different factors of $\cat{C}$. These may help to write the snake isomorphisms. For example, the composition
	\begin{align}
	\cat{C}\boxtimes \cat{C}  \xrightarrow{ \ \Delta \boxtimes \id_\cat{C}\boxtimes \id_\cat{C}  \  } \cat{C}^{\boxtimes 4} \xrightarrow{\   \id_\cat{C} \boxtimes \kappa \boxtimes \id_\cat{C}     \ } \cat{C}\boxtimes \cat{C}\xrightarrow{\ \kappa \ } \FinVect 
	\end{align} is naturally isomorphic to $\kappa$ by a snake isomorphism.  The component of this natural isomorphism at $X\boxtimes Y \in \cat{C}\boxtimes \cat{C}$ can now be written as
	\begin{align}
	\kappa(\Delta',Y)\otimes \kappa(\Delta'',X)\cong \kappa(X,Y) \ . 
	\end{align}
\end{remark}
	
	\begin{proposition}\label{propkappaisequiv}
		A pairing $\kappa$ on a finite category $\cat{C}\in\Fin$ is non-degenerate if and only if $-^\kappa :\cat{C}\to \cat{C}^\opp$ from Definition~\ref{defkappaopp}
		is an equivalence.
		In that case, the coevaluation object is given by the 
		coend $\Delta = \int^{ X \in \cat{C}   }   X \boxtimes X^{-\kappa} \in \cat{C}\boxtimes \cat{C}$, where $X^{-\kappa}$ is the image of $X$ under the weak inverse of $-^\kappa$.
		If $\kappa$ is symmetric, then $-^{-\kappa}\cong -^\kappa$ by a canonical isomorphism, so that the coevaluation object is canonically isomorphic to the coend $\int^{X\in\cat{C}} X \boxtimes X^\kappa\in\cat{C}\boxtimes\cat{C}$. 
	\end{proposition}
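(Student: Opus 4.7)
The proof reduces to the canonical non-degeneracy of the hom-pairing in $\Fin$ and transports everything through $-^\kappa$.

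First, I would verify that $\cat{C}^\opp$ is a canonical dual of $\cat{C}$ in $\Fin$: the hom-pairing $\cat{C}(-,-)\colon\cat{C}^\opp\boxtimes\cat{C}\to\FinVect$ is non-degenerate, with coevaluation $\Delta_\mathrm{can}=\int^{X\in\cat{C}} X\boxtimes X\in\cat{C}\boxtimes\cat{C}^\opp$. This is essentially Theorem~\ref{thmfromfss} applied to the identity functor on $\cat{C}$: the unit of the adjunction $\Psi\dashv\Phi$ produces (via co-Yoneda) the snake isomorphisms. Next, Lemma~\ref{lemmakappastar} rewrites $\kappa$ up to canonical natural isomorphism as the composition
\begin{align}
\cat{C}\boxtimes\cat{C}\ra{(-^\kappa)\boxtimes\id_{\cat{C}}}\cat{C}^\opp\boxtimes\cat{C}\ra{\cat{C}(-,-)}\FinVect\ .
\end{align}
Precomposing the non-degenerate hom-pairing with an equivalence keeps it non-degenerate, while uniqueness of the dual in the homotopy category forces any non-degenerate $\kappa$ to arise in this way from the hom-pairing through some equivalence $\cat{C}\to\cat{C}^\opp$, which by Lemma~\ref{lemmakappastar} must be $-^\kappa$. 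Hence $\kappa$ is non-degenerate if and only if $-^\kappa$ is an equivalence.

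When a weak inverse $-^{-\kappa}\colon\cat{C}^\opp\to\cat{C}$ exists, transporting $\Delta_\mathrm{can}$ along $\id_{\cat{C}}\boxtimes(-^{-\kappa})$ yields the coevaluation $\Delta=\int^{X\in\cat{C}} X\boxtimes X^{-\kappa}$. As a sanity check, the snake relation can be verified directly: for $Y\in\cat{C}$,
\begin{align}
(\id_{\cat{C}}\boxtimes\kappa)(\Delta\boxtimes Y)\cong\int^X X\otimes\kappa(X^{-\kappa},Y)\cong\int^X X\otimes\cat{C}(X,Y)\cong Y
\end{align}
by Lemma~\ref{lemmakappastar}, the isomorphism $(X^{-\kappa})^\kappa\cong X$, and the co-Yoneda lemma. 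For the symmetric case, Remark~\ref{remsympairinghom} supplies a natural isomorphism $-^{2\kappa}\cong\id_{\cat{C}}$, which identifies $(-^\kappa)^\opp$ as a weak inverse of $-^\kappa$; hence $-^{-\kappa}\cong(-^\kappa)^\opp$ canonically and $\Delta\cong\int^{X\in\cat{C}} X\boxtimes X^\kappa$.

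The main subtlety I expect is the bicategorical coherence of the transport step: equivalences are only weakly invertible, so the snake isomorphisms inherited from $\Delta_\mathrm{can}$ must be composed with the unit and counit of the adjoint equivalence $-^\kappa\dashv -^{-\kappa}$ in a coherent way rather than via strict equalities. Beyond this bookkeeping, the proof is an essentially formal application of the Eilenberg-Watts Theorem, Lemma~\ref{lemmakappastar}, and the co-Yoneda lemma.
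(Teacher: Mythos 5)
Your proposal is correct and follows essentially the same route as the paper: it reduces everything to the canonical duality between $\cat{C}$ and $\cat{C}^\opp$ with coevaluation $\int^{X\in\cat{C}}X\boxtimes X$ (from the Eilenberg--Watts results of \cite{fss}), identifies $\kappa$ with the hom-pairing precomposed with $-^\kappa\boxtimes\id_\cat{C}$ via Lemma~\ref{lemmakappastar}, transports the coevaluation along the (weak inverse of the) equivalence, and settles the symmetric case by Remark~\ref{remsympairinghom}. The only cosmetic difference is that for the implication ``$\kappa$ non-degenerate $\Rightarrow$ $-^\kappa$ an equivalence'' you invoke uniqueness of duals in the homotopy category, whereas the paper constructs the weak inverse of $X\mapsto\kappa(X,-)$ explicitly from the coevaluation object; these amount to the same argument.
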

	
	\begin{proof} 
		Suppose $\kappa$ is non-degenerate and denote by $\Delta$ the coevaluation object. 
		Sending $\alpha \in \widehat{\cat{C}}:=\Fin[\cat{C},\FinVect]$ to $(\alpha \boxtimes \id_\cat{C})      (\Delta)$ yields a weak inverse for the functor $\cat{C}\to\widehat{\cat{C}}$ sending $X$ to $\kappa(X,-)$. By definition this implies that $-^\kappa$ is also an equivalence.
		
		Conversely, let 
		$-^\kappa : \cat{C} \to \cat{C}^\opp    $
		be an equivalence. 
		Since by \cite[Section~3.7]{fss} $\cat{C}^\opp$ is dual to $\cat{C}$ with duality pairing
		$\cat{C}(-,-):\cat{C}^\opp \boxtimes \cat{C} \to \FinVect$, we see that $\cat{C}$ is self-dual with duality pairing
		\begin{align}
		\cat{C} \boxtimes \cat{C} \xrightarrow{\       -^\kappa \boxtimes  \id_{\cat{C}}  \    }     \cat{C}^\opp \boxtimes \cat{C} \xrightarrow{\ \cat{C}(-,-)    \    } \FinVect\ . 
		\end{align}
		This composition is canonically isomorphic to $\kappa$ by Lemma~\ref{lemmakappastar}. 
		As we can also extract from \cite[Section~3.7]{fss}, the coevaluation object for the duality of $\cat{C}$ and $\cat{C}^\opp$ is the coend $\int^{ X \in \cat{C}   }   X \boxtimes X \in \cat{C} \boxtimes \cat{C}^\opp $. Hence, $\int^{ X \in \cat{C}   }   X \boxtimes X^{-\kappa}$ is the coevaluation object for the self-duality of $\cat{C}$. 
		
		The additional statement
		on the symmetric case
		follows from Remark~\ref{remsympairinghom}.
	\end{proof}

In order to prove excision results later on in Section~\ref{calculusconstruction}, 
we need to relate the composition in the endomorphism operad to left exact coends studied in \cite{lyulex}, see also the treatment in \cite{fss} and additionally \cite{dva} for the relation to homotopy coends and derived traces.
The notion of a left exact coend is relevant in the following situation: If we are given a left exact functor $G:\cat{C}\boxtimes\cat{C}^\opp\boxtimes\cat{D}\to\cat{A}$, where $\cat{A},\cat{C}$ and $\cat{D}$ are finite categories, the functor $\cat{D}\ni Y \mapsto \int^{X\in\cat{C}} F(X\boxtimes X\boxtimes Y)$ will be linear, but not necessarily left exact. Hence, it does not belong to $\Fin$.
As a remedy, we see $G$ as a left exact functor $\cat{C}\boxtimes\cat{C}^\opp\to \Fin[\cat{D},\cat{A}]$. The coend of this functor exists; it will by construction give us a left exact functor $\cat{D}\to\cat{A}$ that one refers to as left exact coend and denotes by $\oint^{X\in\cat{C}}G(X\boxtimes X\boxtimes -)$. 
To make the connection to the endomorphism operad, we first establish a relation between the left exact coend and dualizability in $\Fin$ by proving that the left exact coend can be described through evaluation on the coevaluation object:

\begin{lemma}\label{lemmaleftexactcoend}
	Let $\cat{C}\in \Fin$ have a  non-degenerate symmetric pairing $\kappa : \cat{C}\boxtimes \cat{C}\to\FinVect$.
	Then for any finite category $\cat{D}$ and any left exact functor $F:\cat{C}\boxtimes\cat{C}\boxtimes\cat{D}\to\FinVect$, there is a canonical isomorphism
	\begin{align}
	\oint^{X\in\cat{C}} F(X\boxtimes X^{\kappa}\boxtimes -) \cong F\left(     \left(   \int^{X\in\cat{C}} X \boxtimes X^{\kappa} \right)  \boxtimes -      \right) \ , 
	\end{align}
	where $-^\kappa : \cat{C}^\opp\to\cat{C}$ is the equivalence induced by $\kappa$. 
	\end{lemma}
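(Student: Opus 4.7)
The plan is to reduce the statement in three steps to an application of the co-Yoneda lemma, after which the result becomes essentially a version of the trace formula for dualizable categories. Throughout, write $\Delta := \int^{X\in\cat{C}} X\boxtimes X^\kappa \in \cat{C}\boxtimes\cat{C}$; this object exists by Proposition~\ref{propkappaisequiv}, and the right-hand side of the lemma is $F(\Delta\boxtimes -)$.

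First I would use the $\Fin$-enrichment of $\Fin$ to reformulate $F$ as a left exact functor
\begin{equation}
\tilde F \colon \cat{C}\boxtimes\cat{C}\longrightarrow \Fin[\cat{D},\FinVect] \ , \qquad \tilde F(X\boxtimes Y)(Z) := F(X\boxtimes Y\boxtimes Z) \ .
\end{equation}
Because $\cat{D}$ is finite, the target $\cat{A} := \Fin[\cat{D},\FinVect]$ is again in $\Fin$, so the left exact coend on the left-hand side is, by the very definition recalled just above the lemma, the ordinary coend of $\tilde F(X\boxtimes X^\kappa)$ in the finite category $\cat{A}$. Thus the claim becomes $\int^{X\in\cat{C}}\tilde F(X\boxtimes X^\kappa)\cong \tilde F(\Delta)$.

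Second I would transport everything along the equivalence $-^\kappa\colon\cat{C}\to\cat{C}^\opp$ of Proposition~\ref{propkappaisequiv}: precomposing with $\id\boxtimes(-^{-\kappa})$ turns $\tilde F$ into a left exact functor $\hat F\colon\cat{C}\boxtimes\cat{C}^\opp\to\cat{A}$, and $\Delta$ is sent to the canonical coevaluation object $\mathbb{D}:=\int^{X\in\cat{C}} X\boxtimes X\in \cat{C}\boxtimes\cat{C}^\opp$ for the standard self-duality of $\cat{C}$ used in \cite[Sect.~3.7]{fss}. The symmetry of $\kappa$ from Remark~\ref{remsympairinghom} is what makes this identification canonical. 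The claim now reads: for every left exact $\hat F\colon\cat{C}\boxtimes\cat{C}^\opp\to\cat{A}$ with $\cat{A}\in\Fin$,
\begin{equation}
\int^{X\in\cat{C}} \hat F(X\boxtimes X) \ \cong \ \hat F(\mathbb{D}) \ .
\end{equation}

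Third I would prove this reduced statement via the iterated Eilenberg-Watts Theorem~\ref{thmfromfss}, which identifies $\Fin[\cat{C}\boxtimes\cat{C}^\opp,\cat{A}]\simeq \cat{C}^\opp\boxtimes\cat{C}\boxtimes\cat{A}$ and writes every $\hat F$ as a left exact coend of representables of the form $(X\boxtimes Y)\mapsto \cat{C}(P,X)\otimes \cat{C}(Y,Q)\otimes a$ with $P,Q\in\cat{C}$ and $a\in\cat{A}$. Both sides of the reduced claim are left exact in $\hat F$, so it suffices to test the isomorphism on representables; on such a representable the left-hand side becomes $\int^X \cat{C}(P,X)\otimes \cat{C}(X,Q)\otimes a \cong \cat{C}(P,Q)\otimes a$ by co-Yoneda, and the right-hand side unfolds to the same expression since the representable bifunctor is cocontinuous in each slot of $\cat{C}\boxtimes\cat{C}^\opp$ and $\mathbb{D}=\int^X X\boxtimes X$.

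The main obstacle will be this last step, namely the tension that a general left exact $\hat F$ is \emph{not} cocontinuous and so has no right to commute with the coend defining $\mathbb{D}$. The argument circumvents this by using Eilenberg-Watts to reduce to the representable case, where cocontinuity holds slot-wise. An alternative, conceptually cleaner route I expect to work is to observe that $\mathbb{D}$ corresponds under the Eilenberg-Watts equivalence $\cat{C}\boxtimes\cat{C}^\opp\simeq \Fin[\cat{C},\cat{C}]$ to $\id_\cat{C}$, so that $\hat F(\mathbb{D})$ is the categorical trace of the object of $\cat{C}^\opp\boxtimes\cat{C}\boxtimes\cat{A}$ representing $\hat F$; the coend on the left-hand side computes precisely this trace, and the isomorphism then follows from the naturality of Eilenberg-Watts.
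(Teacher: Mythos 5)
Your overall strategy (reduce to ``representable'' functors via Eilenberg--Watts and compute with co-Yoneda) is close in spirit to the paper's proof, but the key step as you justify it does not work. On a representable $\hat F(X\boxtimes Y)=\cat{C}(P,X)\otimes\cat{C}(Y,Q)\otimes a$ you evaluate $\hat F(\mathbb{D})$ by asserting that ``the representable bifunctor is cocontinuous in each slot'' and therefore commutes with the coend defining $\mathbb{D}=\int^{X}X\boxtimes X$. Hom functors are left exact, not cocontinuous, so this is false as stated --- and if left exact functors did commute with this coend, the lemma would be immediate for \emph{every} $F$ and no reduction to representables would be needed. The true content of the representable case is the computation $(\cat{C}\boxtimes\cat{C}^\opp)\bigl(P\boxtimes Q,\int^{X}X\boxtimes X\bigr)\cong\cat{C}(Q,P)$, which requires knowing that the canonical coend corresponds to $\id_\cat{C}$ under Theorem~\ref{thmfromfss}, equivalently the statement from \cite[Proposition~3.4]{fss} that the contraction with an object commutes with this coend; this is exactly the nontrivial input that the paper's proof invokes and that your cocontinuity claim tries to bypass. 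Your closing ``trace'' remark gestures at the right fact but does not supply the argument.

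The reduction to representables is also not justified as written. Pure tensors $P\boxtimes Q\boxtimes a$ generate $\cat{C}^\opp\boxtimes\cat{C}\boxtimes\cat{A}$ under finite colimits (cokernels of maps between finite sums), so to transfer an isomorphism from representables to all $\hat F$ you must show that both functionals $\hat F\mapsto\int^{X}\hat F(X\boxtimes X)$ and $\hat F\mapsto\hat F(\mathbb{D})$ preserve the relevant (co)limit presentations in the $\hat F$-variable; ``left exact in $\hat F$'' is neither established nor the right property, and exactness of the coend functional in $\hat F$ is essentially of the same difficulty as the lemma itself, so the argument risks circularity. For comparison, the paper avoids both issues by a different route: by \cite[Proposition~1.7 \& Corollary~1.10]{dss} the left exact functor $F$ is representable by an object $L\in\cat{C}\boxtimes\cat{C}\boxtimes\cat{D}$, the claim is reduced via the Yoneda Lemma to showing that the dinatural family $\langle L,X\boxtimes X^\kappa\rangle\to\langle L,\int^{X}X\boxtimes X^\kappa\rangle$ is universal, and this is precisely \cite[Proposition~3.4]{fss}. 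To repair your proof you would need to replace the cocontinuity step by this fact (or by the adjunction computation identifying $\Hom(P\boxtimes Q,\mathbb{D})$), and either make the generation argument precise or, more simply, apply representability to $F$ directly as the paper does.
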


\begin{proof}
	By \cite[Proposition~1.7 \& Corollary~1.10]{dss} $F$ is representable, i.e.\ there is an object $L\in \cat{C}\boxtimes\cat{C}\boxtimes\cat{D}$ such that $F$ can be written as the hom functor $(\cat{C}\boxtimes\cat{C}\boxtimes\cat{D})(L,-)$. Using for $Y\in\cat{C}\boxtimes \cat{C}$ the contraction
	\begin{align}
	\langle L,Y\rangle := (\cat{C}\boxtimes\cat{C})(L',Y)\otimes L''\in\cat{D} \quad \text{with Sweedler notation}\quad L=L'\boxtimes L'' \in (\cat{C}\boxtimes\cat{C})\boxtimes \cat{D}
	\end{align} of $L$ and $Y$ via the morphism spaces, we find \begin{align} F(X\boxtimes X^{\kappa}\boxtimes-)&=\cat{D}(\langle L,X\boxtimes X^{\kappa} \rangle,-) \ , \\ F\left(     \left(   \int^{X\in\cat{C}} X \boxtimes X^{\kappa} \right)  \boxtimes -      \right)&=\cat{D} \left(     \left\langle L,\int^{X\in\cat{C}} X \boxtimes X^{\kappa}\right\rangle ,-    \right) \ . \label{eqnthesecondfunctor} \end{align}
	It remains to prove that the dinatural family
	\begin{align}
	\cat{D}(\langle L,X\boxtimes X^{\kappa}\rangle,-) \to \cat{D} \left(     \left\langle L,\int^{X\in\cat{C}} X \boxtimes X^{\kappa}\right\rangle ,-    \right)
	\end{align} induced by the dinatural family $X\boxtimes X^{\kappa}\to \int^{X\in\cat{C}} X \boxtimes X^{\kappa}$ is universal because this exhibits \eqref{eqnthesecondfunctor} as the coend $\oint^{X\in\cat{C}} F(X\boxtimes X^{\kappa}\boxtimes -)$ and hence proves the assertion. For this, we need to show that any dinatural family $	\cat{D}(\langle L,X\boxtimes X^{\kappa}\rangle,-)\to G$ for some left exact functor $G:\cat{D}\to\FinVect$ descends uniquely to $\cat{D} \left(     \left\langle L,\int^{X\in\cat{C}} X \boxtimes X^{\kappa}\right\rangle ,-    \right)$. But by invoking again the representability statement from \cite[Proposition~1.7 \& Corollary~1.10]{dss}, we may write $G=\cat{D}(M,-)$ for some $M\in\cat{D}$, which by the Yoneda Lemma implies that it suffices to prove that the dinatural family
	\begin{align}
	\langle L,X\boxtimes X^{\kappa}\rangle\to \left\langle L,\int^{X\in\cat{C}} X \boxtimes X^{\kappa}\right\rangle
	\end{align} is universal or, in other words, that there is a canonical isomorphism
	\begin{align}
\int^{X\in\cat{C}}	\langle L,X\boxtimes X^{\kappa}\rangle\cong \left\langle L,\int^{X\in\cat{C}} X \boxtimes X^{\kappa}\right\rangle \ , 
	\end{align}
	which in fact exists by \cite[Proposition~3.4]{fss}. 
	\end{proof}

As a consequence of this Lemma, we find that the composition operation in the endomorphism operad corresponding to a graph is a left exact coend with one dummy variable for each internal edge of the graph:

\begin{proposition}\label{propointcoend}
	Let $\Gamma : T\to T'$ be a morphism in $\Graphs$ between objects $T$ and $T'$.
	We assume that $T'$ is connected and denote by $T=\sqcup_{i=1}^m T^{(i)}$ the decomposition of $T$ into connected components.
	Then for any $\cat{C}\in\Fin$ with non-degenerate symmetric pairing $\kappa : \cat{C}\boxtimes\cat{C}\to\FinVect$
	and $F=(F_i)_{1\le i\le m} \in\prod_{i=1}^m \Fin(\cat{C}^{\boxtimes \Legs(T^{(i)})},\Vect)$ in $\Fin$, we have a canonical isomorphism
	\begin{align}
	\End_\kappa^\cat{C}(\Gamma)F\cong \oint^{X_1,\dots,X_r\in\cat{C}} F^\boxtimes(\dots,X_j,\dots,X_j^{\kappa},\dots) \quad \text{with}\quad F^\boxtimes = F_1 \boxtimes \dots \boxtimes F_m:\cat{C}^{\boxtimes	\Legs(T)}\to\Vect \ . 
	\end{align} 
	The left exact coends $\oint$ runs over the variables $X_1,\dots,X_r$ corresponding to internal edges of $\Gamma$.
	(The integrand of the coend is just a mnemonic notation for the insertion of the variables corresponding to internal edges into the correct arguments of $F^\boxtimes$ according to the identification of $\nu(\Gamma)$ with $T$.)
	\end{proposition}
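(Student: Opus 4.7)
The strategy is to unwind the definition of $\End_\kappa^{\cat{C}}(\Gamma)$ from Section~\ref{seccyclicmodalg}, substitute the explicit formula for the coevaluation object provided by Proposition~\ref{propkappaisequiv}, and then drag the coends out of the arguments of $F$ via Lemma~\ref{lemmaleftexactcoend} one internal edge at a time.

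More concretely, by construction $\End_\kappa^\cat{C}(\Gamma)F$ is obtained by precomposing $F$ with the functor
\begin{align}
\cat{C}^{\boxtimes \Legs(T')}\longrightarrow \cat{C}^{\boxtimes \Legs(T')}\boxtimes \bigboxtimes_{\text{int.\ edges of $\Gamma$}} (\cat{C}\boxtimes\cat{C})\xrightarrow{\ \id\boxtimes \bigboxtimes \Delta\ } \cat{C}^{\boxtimes \Legs(\nu(\Gamma))}\cong \cat{C}^{\boxtimes \Legs(T)}\ ,
\end{align}
where the last identification uses $\varphi_1,\varphi_2$. Since $\kappa$ is non-degenerate and symmetric, Proposition~\ref{propkappaisequiv} identifies the coevaluation object as $\Delta \cong \int^{X\in\cat{C}} X\boxtimes X^{\kappa}$ in $\cat{C}\boxtimes\cat{C}$. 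Plugging this into each of the $r$ internal slots, we see that $\End_\kappa^\cat{C}(\Gamma)F$ evaluated on $(Y_\ell)_{\ell\in\Legs(T')}$ equals
\begin{align}
F\Bigl(\ldots,Y_\ell,\ldots,\Bigl(\int^{X_1\in\cat{C}} X_1\boxtimes X_1^\kappa\Bigr),\ldots,\Bigl(\int^{X_r\in\cat{C}} X_r\boxtimes X_r^\kappa\Bigr),\ldots\Bigr)\ ,
\end{align}
where each coend occupies the two half-edge slots belonging to one internal edge of $\Gamma$, as dictated by $\varphi_1$ and $\varphi_2$.

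Now iterate Lemma~\ref{lemmaleftexactcoend}. Fixing an order on the internal edges $1,\dots,r$, apply the lemma first to the pair of slots corresponding to edge $1$, with the remaining slots (external and internal) grouped into the dummy variable ``$-$''; this is permissible because $F$ is left exact and the argument of the lemma is symmetric in the two factors of $\cat{C}\boxtimes\cat{C}$. This moves the coend in the variable $X_1$ outside $F$, producing $\oint^{X_1\in\cat{C}}$ in front. Applying the lemma again for edge $2$, and so on, successively extracts each coend. The resulting left exact coends can be combined into a single multi-variable left exact coend
\begin{align}
\oint^{X_1,\dots,X_r\in\cat{C}} F(\dots,X_j,\dots,X_j^\kappa,\dots)
\end{align}
by the Fubini theorem for coends in $\Fin$.

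The only subtle point is bookkeeping: the slots into which the variables $X_j,X_j^\kappa$ are inserted depend on $\varphi_1,\varphi_2$, and one must check that the successive applications of Lemma~\ref{lemmaleftexactcoend} respect the symmetry of $\Delta$ established in Remark~\ref{remsymconcrete} (so that swapping the two half-edges of an internal edge does not change the resulting functor, consistently with the equivalence class defining the morphism in $\Graphs$). This is the main technical obstacle, but it follows from the canonical isomorphism $X^{\kappa\kappa}\cong X$ of Remark~\ref{remsympairinghom} together with the coherence of the snake isomorphisms; concretely, the swap is absorbed into the change of coend variable $X_j\mapsto X_j^\kappa$.
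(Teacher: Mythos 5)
Your proposal is correct and follows essentially the same route as the paper's proof: unwind the definition of $\End_\kappa^\cat{C}(\Gamma)$, identify the coevaluation object with the coend $\int^{X\in\cat{C}}X\boxtimes X^\kappa$ via Proposition~\ref{propkappaisequiv}, and conclude with Lemma~\ref{lemmaleftexactcoend}. The only difference is presentational: the paper reduces without loss of generality to a single internal edge, whereas you iterate the lemma edge by edge and combine the coends by Fubini, which amounts to the same argument.
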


\begin{proof}
	Since we can decompose $\Gamma$ into morphisms contracting one internal edge at a time, we may assume that $\Gamma$ has just one internal edge.
	Then
	\begin{align}
	\End_\kappa^\cat{C}(\Gamma)F\cong F^\boxtimes(\dots,\Delta',\dots,\Delta'',\dots)
	\end{align}
	with the Sweedler notation for the coevaluation object $\Delta \in \cat{C}\boxtimes\cat{C}$ 
	discussed in Remark~\ref{remsnakecoev}.
	This coevaluation object is given by $\Delta = \int^{X\in\cat{C}}   X \boxtimes X^{\kappa}$ by Proposition~\ref{propkappaisequiv}.
	Now the assertion follows from Lemma~\ref{lemmaleftexactcoend}.
	\end{proof}

\section{The Lifting Theorem}
Pulling back a cyclic operad $\mathcal{O}$ along the functor $\RForests \longrightarrow \Forests$ yields
an ordinary operad $\overline{\mathcal{O}}$, the underlying ordinary operad of $\cat{O}$. It is a natural question whether the structure of an algebra  over $\overline{\mathcal{O}}$ on some object $X$
(in some (higher) symmetric monoidal category --- depending on the context that one is working in)
 can be lifted
to a \emph{cyclic} algebra over $\mathcal{O}$ on $(X,\kappa)$. 
More precisely: What kind of additional data and/or properties are needed?

When the target category is given by the category of vector spaces, there is a very classical answer \cite{gk,markl}: The structure of a cyclic $\mathcal{O}$-algebra on a vector space $V$ with a pairing $\kappa$ is equivalent to the structure of an $\overline{\mathcal{O}}$-algebra on $V$ such that the pairing satisfies an invariance property. 

In this section we provide an answer in the context of symmetric monoidal bicategories.
In this framework, we have to take further coherence data into account making the situation slightly more subtle.  Still the principle that an $\mathcal{O}$-algebra is an $\overline{\mathcal{O}}$-algebra plus an invariant pairing can be generalized to this setting. 
However, the invariance of the pairing will amount to additional \emph{structure} and will not be just a property. More precisely, for each operation, there will be a cyclic invariance isomorphism.
The isomorphisms for different operations will generally not be independent, but related in a way prescribed by the operad. This leads to a significantly richer algebraic structure. 	

	\subsection{General version}
In order to state the Lifting Theorem,
 we need to make some preliminary observations:
Consider the corollas  $T_n$ (these have $H=\{0,\dots,n\}$ as the set of edges and one vertex) 
and their disjoint unions. Whenever needed, we may regard $T_n$ as a rooted corolla with $0$ as the root. 
Recall from Section~\ref{secrecallcos} that  	
the cyclic permutation of legs induces an isomorphism $\tau_n : T_n \cong T_n$ in $\Forests$ with $\tau_n^{n+1}=\id_{T_n}$.

\begin{figure}[h]
	\begin{center}
		\begin{overpic}[scale=2]
			{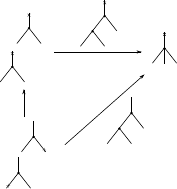}
			\put(34,85){$\Gamma'=$}
			\put(-8,42){$ \tau_{2} \sqcup \tau_{2}^2$}
			\put(50,30){$\Gamma=$}
		\end{overpic}
	\end{center}
	\caption{A sketch for the factorization of a morphism $\Gamma\in \Forests$ into a cyclic permutation
		and $\Gamma'\in \RForests$ The roots are marked by $\times$.}
	\label{Fig:Decomposition}
\end{figure}   

The graph underlying a morphism $\Gamma\colon \gamma_1 = \sqcup_{i} T_{n_i}\longrightarrow \sqcup_{j} T_{n_j}=\gamma_2$ in $\Forests$ between finite disjoint unions of corollas can be 
equipped with the structure of a rooted forest using the roots for the $T_{n_j}$ (by convention the root for any of the $T_{n_j}$ is the zeroth leg). Hence, we can interpret
it as a morphism $\Gamma'\colon \gamma_1\to \gamma_2$ in $\RForests$, where the identification 
$\varphi_1'\colon \gamma_1 \longrightarrow \nu(\Gamma')$ differs from the original one $\varphi_1: \gamma_1 \to \nu(\Gamma)$ by a cyclic 
permutation of the corollas. This allows us to write $\Gamma = \Gamma' \circ \sqcup_{i} \tau_{n_i}^{k_i} $ 
with unique $k_i\in \Z_{n_i}$, see Figure~\ref{Fig:Decomposition} for an illustration. 
We will refer to this factorization of $\Gamma$ as the \emph{standard factorization} and will denote it (as above) by $\Gamma'$.

Now let $\Omega\colon \sqcup_{i}T_{n_i}\to T_{n}$ be a morphism in $\RForests$.
Then $\tau_n \circ \Omega$ can be seen as morphism in $\Forests$, and hence, we can consider its standard factorization \begin{align}
\tau_n \circ \Omega =  \Omega^{(n)} \circ \sqcup_{i} \tau_{n_i}^{k_i}\ , \quad \text{where}\quad \Omega^{(n)} := (\tau_n \circ \Omega)' \ .   \label{eqnfactorization} \end{align} 
With this notation,
we can now formulate the coherent version of the principle that a cyclic algebra over a cyclic operad is an non-cyclic algebra over the underlying operad plus an invariant pairing.

\begin{theorem}[Lifting Theorem]\label{thmlifting}
	Let $\cat{M}$ be a symmetric monoidal bicategory, $\kappa$  a non-degenerate symmetric  pairing on $X\in\cat{M}$
	and $\mathcal{O}$ a $\Cat$-valued operad. Then the structure of an $\cat{M}$-valued cyclic algebra over $\mathcal{O}$ on $(X,\kappa)$ can equivalently be described as an algebra $A$ over the underlying non-cyclic operad $\overline{\mathcal{O}}$ together with natural isomorphisms $\phi_n$
	\begin{equation}\label{isophineqn0}
	\begin{tikzcd}
	\mathcal{O}(T_n) \ar{rr}{\mathcal{O}(\tau_n) } \ar[swap]{dd}{A_{T_n}} & & \mathcal{O}(T_n)  \ar{dd}{A_{T_n}} \ar[Rightarrow, shorten >= 10, shorten <= 10]{ddll}{\phi_n} \\
	& & \\
	\End_\kappa ^X(T_n) \ar[swap]{rr}{\End_\kappa^X(\tau_n) } & & \End_\kappa ^X(T_n)
	\end{tikzcd} 
	\end{equation} 
	for $n \ge 1$
	subject to the following coherence conditions:
	
	\begin{enumerate}
		
		\item[{\normalfont (C1)}]
		
		The $n+1$-fold composition of $\phi_n$ is the identity, i.e.\
		we have the following equality of natural isomorphisms:
		\begin{equation}
		\begin{tikzcd}
		& & & \ar[Rightarrow, shorten >= 10, shorten <= -10]{d} & & & \\
		\mathcal{O}(T_n) \ar[bend left=30]{rrrrrr}{\mathcal{O}(\tau_n^{n+1}=\id_{T_n})} \ar{rr}{\mathcal{O}(\tau_n) } \ar[swap]{dd}{A_{T_n}} & & \mathcal{O}(T_n)  \ar{dd}{A_{T_n}} \ar[Rightarrow, shorten >= 10, shorten <= 10]{ddll}{\phi_n} & \dots & \mathcal{O}(T_n) \ar{rr}{\mathcal{O}(\tau_n) } \ar[swap]{dd}{A_{T_n}} & & \mathcal{O}(T_n)  \ar{dd}{A_{T_n}} \ar[Rightarrow, shorten >= 10, shorten <= 10]{ddll}{\phi_n} \\
		& & & & \\
		\End_\kappa ^X(T_n) \ar{rr}{\End_\kappa^X(\tau_n) } \ar[bend right=30, swap]{rrrrrr}{\End_\kappa^X(\tau_n^{n+1}=\id_{T_n})} & & \End_\kappa ^X(T_n) & \dots \ar[Rightarrow, shorten >= -10, shorten <= 10]{d} & \End_\kappa ^X(T_n) \ar{rr}{\End_\kappa^X(\tau_n) } &  & \End_\kappa ^X(T_n) \\
		& & & \ \  & & & 
		\end{tikzcd} 
		\end{equation}
		\begin{equation}
		=
		\begin{tikzcd}
		\mathcal{O}(T_n) \ar{rr}{\mathcal{O}(\id_{T_n}) } \ar[swap]{dd}{A_{T_n}} & & \mathcal{O}(T_n)  \ar{dd}{A_{T_n}} \ar[Rightarrow, shorten >= 10, shorten <= 10]{ddll}{A_{\id_{T_n}}} \\
		& & \\
		\End_\kappa ^X(T_n) \ar[swap]{rr}{\End_\kappa^X(\id_{T_n}) } & & \End_\kappa ^X(T_n) \ . 
		\end{tikzcd} 
		\end{equation} \label{itemC1}

		\item[{\normalfont (C2)}] 
		The isomorphisms $\phi$ intertwine with the $\mathcal{O}$-action, i.e.\
		for all rooted morphisms $\Omega\colon \sqcup_{i}T_{n_i}\to T_{n}$, the equality
		\begin{equation}
		\begin{tikzcd}
		& & \  \ar[Rightarrow]{d} & \\
		\mathcal{O}(\sqcup_{i}T_{n_i}) \ar{rr}{\mathcal{O}(\Omega)}\ar[swap]{dd}{A_{\sqcup_{i}T_{n_i}}} \ar[bend left=30]{rrrr}{\mathcal{O}(\tau_n\circ \Omega)} & & \mathcal{O}(T_n) \ar{dd}{A_{T_{n}}} \ar{rr}{\mathcal{O}(\tau_n)} \ar[Rightarrow]{ddll}{A_{\Omega}}  & & \mathcal{O}(T_n)\ar{dd}{A_{T_{n}}} \ar[Rightarrow]{ddll}{\phi_n} 
		\\ 
		& & & 
		\\
		\End_\kappa ^X(\sqcup_{i}T_{n_i}) \ar{rr}{\End_\kappa ^X(\Omega)}  \ar[bend right=30, swap]{rrrr}{\End_\kappa ^X(\tau_n\circ \Omega)} & & \End_\kappa ^X(T_n) \ar{rr}{\End_\kappa ^X(\tau_n)}  \ar[Rightarrow]{d} & & \End_\kappa ^X(T_n)  \\ 
		& & \  & &  
		\end{tikzcd} 
		\end{equation}
		\begin{equation}
		=
		\begin{tikzcd}
		& & \  \ar[Rightarrow]{d} & \\
		\mathcal{O}(\sqcup_{i}T_{n_i}) \ar{rr}{\mathcal{O}(\sqcup \tau_{n_i}^{k_i})}\ar{dd}[swap]{A_{\sqcup_{i}T_{n_i}}} \ar[bend left=30]{rrrr}{\mathcal{O}(\tau_n\circ \Omega)} & & \mathcal{O}(\sqcup_{i}T_{n_i}) \ar{dd}{A_{T_{n}}} \ar{rr}{\mathcal{O}(\Omega^{(n)})} \ar[Rightarrow]{ddll}{\sqcup \phi_{n_i}^{k_i}}  & & \mathcal{O}(T_n)\ar{dd}{A_{T_{n}}} \ar[Rightarrow]{ddll}{A_{\Omega^{(n)}}} 
		\\ 
		& & & 
		\\
		\End_\kappa ^X(\sqcup_{i}T_{n_i}) \ar{rr}{\End_\kappa ^X(\sqcup \tau_{n_i}^{k_i})}  \ar[bend right=30, swap]{rrrr}{\End_\kappa ^X(\tau_n\circ \Omega)} & & \End_\kappa ^X(T_n) \ar{rr}{\End_\kappa ^X(\Omega^{(n)})}  \ar[Rightarrow]{d} & & \End_\kappa ^X(T_n)  \\ 
		& & \  & &  
		\end{tikzcd}\label{isophineqn2}
		\end{equation}
		of natural isomorphisms holds, where $\tau_n\circ \Omega = \Omega^{(n)} \circ \sqcup_{i} \tau_{n_i}^{k_i}$ is the standard factorization  of the morphism $\tau_n\circ \Omega$ in $\Forests$ 
		given in \eqref{eqnfactorization}. \label{itemC2}
	\end{enumerate}
\end{theorem}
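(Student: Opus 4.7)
The strategy is to use the standard factorization $\Gamma = \Gamma' \circ \sqcup_i \tau_{n_i}^{k_i}$ from \eqref{eqnfactorization} to present $\Forests$ as obtained from $\RForests$ by adjoining the cyclic permutations $\tau_n : T_n \to T_n$ subject to two families of relations: the order relations $\tau_n^{n+1} = \id_{T_n}$, and the standard factorization identities $\tau_n \circ \Omega = \Omega^{(n)} \circ \sqcup_i \tau_{n_i}^{k_i}$ for rooted morphisms $\Omega$. Since a symmetric monoidal transformation $A : \mathcal{O} \to \End_\kappa^X$ of $\Cat$-valued symmetric monoidal functors on $\Forests$ consists of component functors $A_\gamma$ and naturality $2$-cells $A_\Gamma$ for each morphism $\Gamma$, this presentation will let us reduce the data of $A$ to its restriction to $\RForests$ together with the $2$-cells $\phi_n := A_{\tau_n}$ for the cyclic permutations alone.

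For the forward implication, given a cyclic algebra $A$, I simply restrict to $\RForests$ to obtain the underlying $\overline{\mathcal{O}}$-algebra and define $\phi_n$ to be the naturality $2$-cell of $A$ at $\tau_n$. Condition (C1) then records functoriality of $A$ on the relation $\tau_n^{n+1} = \id_{T_n}$: the $(n+1)$-fold pasting of $\phi_n$ equals $A_{\id_{T_n}}$, which by unitality of symmetric monoidal transformations is essentially trivial. Condition (C2) is simply the statement that the naturality $2$-cell $A_{\tau_n\circ\Omega}$ can be computed in two ways, corresponding to the two factorizations of $\tau_n\circ\Omega$ in $\Forests$ (namely $\tau_n \circ \Omega$ written directly, and the standard factorization $\Omega^{(n)} \circ \sqcup_i \tau_{n_i}^{k_i}$), together with compatibility of $A$ with composition.

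For the backward implication, given $(A,\{\phi_n\})$ satisfying (C1) and (C2), I extend the $2$-cells from rooted morphisms and from bare cyclic permutations to arbitrary morphisms in $\Forests$ by declaring, for $\Gamma$ with standard factorization $\Gamma' \circ \sqcup_i \tau_{n_i}^{k_i}$, that $A_\Gamma$ is the horizontal pasting of $A_{\Gamma'}$ with $\sqcup_i \phi_{n_i}^{k_i}$, where $\phi_n^k$ denotes the $k$-fold iteration of $\phi_n$. Well-definedness of this assignment uses (C1) to make sense of $\phi_n^{k}$ modulo $k \mapsto k + (n+1)$, while functoriality of $A$ with respect to composition in $\Forests$ is reduced, by writing both factors and their composite in standard form, to the rooted case, which is governed by the $\overline{\mathcal{O}}$-algebra structure of $A$, and to repeated applications of (C2) to commute cyclic permutations past rooted morphisms. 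Symmetric monoidality of the extended transformation follows from the rooted symmetric monoidality on $\RForests$ together with the fact that cyclic permutations on distinct corollas are mutually disjoint, so the coherence data assembles via the monoidal structure.

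The main obstacle is the backward direction, specifically the verification that composition of two arbitrary morphisms $\Gamma_2 \circ \Gamma_1$ in $\Forests$ is correctly respected after standard-factoring both sides, and the bookkeeping of bicategorical coherence $2$-cells during this verification. Here one must observe that the standard factorization of $\Gamma_2 \circ \Gamma_1$ can be assembled from those of $\Gamma_1$ and $\Gamma_2$ by iteratively commuting a cyclic permutation at the root of a corolla in $\Gamma_1$ past a rooted piece of $\Gamma_2$ attached to that corolla, which is precisely the operation controlled by (C2); each such commutation replaces a cyclic factor $\tau_n^{k_i}$ by $\sqcup \tau_{n_j}^{k_j'}$ and simultaneously replaces the rooted piece by its image under the combinatorial operation $\Omega \mapsto \Omega^{(n)}$. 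The closedness hypothesis on $\cat{M}$ is only used to ensure that the cyclic endomorphism operad $\End_\kappa^X$ is well-defined and that the naturality squares in \eqref{isophineqn0} actually admit $2$-cells to be named $\phi_n$.
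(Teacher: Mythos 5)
Your proposal follows essentially the same route as the paper's proof: extract $\phi_n$ as the naturality $2$-cells of $A$ at $\tau_n$ in one direction, and in the converse direction extend $A$ to all of $\Forests$ by pasting $A_{\Gamma'}$ with $\sqcup_i\phi_{n_i}^{k_i}$ along the standard factorization, with (C1) and (C2) guaranteeing that this is well defined and functorial and the two constructions being mutually inverse. Your extra discussion of how the standard factorization of a composite $\Gamma_2\circ\Gamma_1$ is assembled by commuting cyclic permutations past rooted pieces via (C2) only spells out a verification the paper leaves implicit.
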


\begin{proof}
	It is clear that we can extract from the structure of a cyclic algebra the natural isomorphisms \eqref{isophineqn0} through $\phi_n:=A_{\tau_n}$.  By definition these have to satisfy the coherence conditions (C1) and (C2) given above.

	Conversely, suppose we are given an $\overline{\mathcal{O}}$-algebra $A$.
	To extend the structure of $A$ to a cyclic algebra, we have to define natural
	transformations 
	\begin{equation}
	\begin{tikzcd}
	\mathcal{O}(\sqcup_{i}T_{n_i}) \ar{rr}{\mathcal{O}(\Gamma) } \ar[swap]{dd}{A_{\sqcup_{i}T_{n_i}}} & & \mathcal{O}(T_n)  \ar{dd}{A_{T_n}} \ar[Rightarrow, shorten >= 10, shorten <= 10]{ddll}{A_{\Gamma}} \\
	& & \\
	\End_\kappa ^X(\sqcup_{i}T_{n_i}) \ar{rr}{\End_\kappa^X(\Gamma) } & & \End_\kappa ^X(T_n)
	\end{tikzcd} 
	\end{equation} 
	for all morphisms $\Gamma\colon  \sqcup_{i}T_{n_i}\to T_{n} $ in $\Forests$. We define them to agree with the ones provided by $A$ on rooted morphisms and by $\phi_n$ on the morphisms $\tau_n$.
	Finally, for an arbitrary morphism $\Gamma$ in $\Forests$,
	we consider its factorization
	$ \Gamma = \Gamma' \circ \sqcup_{i} \tau_{n_i}^{k_i}$
	from \eqref{eqnfactorization}
	and define $A_\Gamma$ by
	\begin{equation}
	\begin{tikzcd}
	\mathcal{O}(\sqcup_{i}T_{n_i}) \ar{rr}{\mathcal{O}(\Gamma) } \ar[swap]{dd}{A_{\sqcup_{i}T_{n_i}}} & & \mathcal{O}(T_n)  \ar{dd}{A_{T_n}} \ar[Rightarrow, shorten >= 10, shorten <= 10]{ddll}{A_{\Gamma}} \\
	& & \\
	\End_\kappa ^X(\sqcup_{i}T_{n_i}) \ar{rr}{\End_\kappa^X(\Gamma) } & & \End_\kappa ^X(T_n)
	\end{tikzcd}
	:= \begin{tikzcd}
	& & \  \ar[Rightarrow]{d} & \\
	\mathcal{O}(\sqcup_{i}T_{n_i}) \ar{rr}{\mathcal{O}(\sqcup \tau_{n_i}^{k_i})}\ar[swap]{dd}{A_{\sqcup_{i}T_{n_i}}} \ar[bend left=30]{rrrr}{\mathcal{O}( \Gamma)} & & \mathcal{O}(\sqcup_{i}T_{n_i}) \ar{dd}{A_{T_{n}}} \ar{rr}{\mathcal{O}(\Gamma')} \ar[Rightarrow]{ddll}{\sqcup \phi_{n_i}^{k_i}}  & & \mathcal{O}(T_n)\ar{dd}{A_{T_{n}}} \ar[Rightarrow]{ddll}{A_{\Gamma'}} 
	\\ 
	& & & 
	\\
	\End_\kappa ^X(\sqcup_{i}T_{n_i}) \ar{rr}{\End_\kappa ^X(\sqcup \tau_{n_i}^{k_i})}  \ar[bend right=30, swap]{rrrr}{\End_\kappa ^X( \Gamma)} & & \End_\kappa ^X(T_n) \ar{rr}{\End_\kappa ^X(\Gamma')}  \ar[Rightarrow]{d} & & \End_\kappa ^X(T_n) \ .  \\ 
	& & \  & &  
	\end{tikzcd}
	\end{equation}	
	Now the conditions (C1) and (C2) imply that this defines a cyclic 
	algebra. 
	
	Both constructions are inverse to each other.
\end{proof}

\begin{remark}[]\label{remliftingpairing}
	By the definition of the endomorphism operad, any operation  $o \in \mathcal{O}(T_n)$ acts as a 1-morphism
	$o : A^{\otimes (n+1)}\to I$,
	and the component $\phi_n^o$ of the natural isomorphism $\phi_n$
	at $o$ is an isomorphism in $\cat{M}(A^{\otimes (n+1)},I)$
	which in the graphical calculus (Remark~\ref{remgraphcalc}) we write as
	\begin{align}
	\tikzfig{lt1}
	\end{align} 
	Here $\tau.o$ is the image of $o$ under the action with $\tau$.
	The coherence condition (C1) implies that applying this isomorphism $n+1$ times yields the identity. 
	The coherence condition (C2) ensures that 
these isomorphism are compatible with the composition of operations.
\end{remark}

\subsection{Adaption to a presentation in terms of generators and relations}
Many operads, including the ones appearing in this paper, have a description in terms of generators and 
relations. 
For us, the case of $\Cat$-valued operads is most important. Although $\Cat$ is a symmetric 
monoidal bicategory, we will treat it as an ordinary symmetric 
monoidal category by forgetting the 2-morphisms in order to build operads from generators and relations.
 In this case, the operadic composition is strictly associative. This has the practical advantage that we can rely on the
well-developed theory 
of generators and relations
in this setting, see e.g.\ \cite[Section~1.2.5]{FresseI}. Specifically for the category-valued 
case, we make a few conventional adjustments following~\cite[Section~4.1]{littlebundles}
that we now also briefly recall: The generators are a sequence $( G_{\operatorname{Ob}}(n) )_{n\in \N }$ of sets of generating objects and another sequence $(  G_{\operatorname{Mor}}(n) )_{n\in \N }$ of sets of
generating morphisms. They come with maps $s_n,t_n\colon  G_{\operatorname{Mor}}(n) \to G_{\operatorname{Ob}}(n)$ specifying the source and target for generating morphisms.  When listing generators for a $\Cat$-valued operad,
 we only list them as a set and complete
them freely to a symmetric sequence in categories. This means for example if we specify a generating object $\mu$ in
arity two, we also add freely an operation $\sigma_{1,2} \mu $ corresponding to the application of
 the permutation $\sigma_{1,2}$ to $\mu$. Furthermore, we freely add the composition of generating morphisms.
 
If $\mathcal{O}$ is a $\Cat$-valued cyclic operad and if the underlying operad $\overline{\mathcal{O}}$ is presented by generating objects and generating morphisms subject to relations in the above-explained sense, we can give the following more explicit version of the Lifting Theorem that we will need in the next section:

\begin{corollary}[Lifting Theorem in terms of generators and relations]\label{remgenrel}
	Let $\mathcal{O}$ be a $\Cat$-valued cyclic operad with a fixed presentation in terms of generators and relations for the underlying operad $\overline{\mathcal{O}}$.
	For a given $\overline{\mathcal{O}}$-algebra in a symmetric monoidal bicategory $\cat{M}$, 
	 a lift of this $\overline{\mathcal{O}}$-algebra structure to a cyclic $\mathcal{O}$-algebra structure amounts
	precisely to the choice of a non-degenerate symmetric pairing $\kappa:X\otimes X\to I$ on the underling object $X$ of the $\overline{\mathcal{O}}$-algebra and isomorphisms
	\begin{equation}\label{eqnphio}
\tikzfig{lt1}
\end{equation} for every generating object $o \in \mathcal{O}(T_n)$ with $n\neq 0$
	subject to the following relations:
	\begin{enumerate}
		
		\item[(C)] The $n+1$-fold composition of $\phi_n^o$ with itself is the identity. 
		
		\item[(R)] Suppose we have a relation $  \mathcal{O}(\Omega)(o_1,\dots , o_\ell)=o=\mathcal{O}(\widetilde \Omega)(\widetilde o_1,\dots , \widetilde o_k)$ between generating objects of $\mathcal{O}$ (here $\Omega$ and $\widetilde \Omega$ are morphisms in $\RForests$, and $o$ is an operation in arity $n$).  Then we obtain two natural isomorphisms
			\begin{equation}\label{eqnphio2}
		\tikzfig{lt2}
		\end{equation}
		induced by \eqref{eqnphio}
		and the standard decomposition of $\tau_n\circ \Omega$ and $\tau_n\circ \widetilde \Omega$, respectively. We need to impose the relation that these two isomorphisms agree, thereby allowing us to extend the definition of the isomorphisms \eqref{eqnphio} consistently from generating operations to all operations.

		\item[(M)] For every generating morphism, $r: o \to o'$
		the square

			\begin{equation}\label{eqnphio3}
		\tikzfig{lt3}
		\end{equation}
		commutes.
		Here the vertical arrows are induced by $\tau.r$ and $r$, respectively.
	\end{enumerate}
\end{corollary}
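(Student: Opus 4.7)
The plan is to deduce this corollary directly from Theorem~\ref{thmlifting} by showing that, under a presentation of $\overline{\mathcal{O}}$ in terms of generators and relations, the coherent family $(\phi_n)_{n\ge 1}$ appearing there is equivalent to the data~\eqref{eqnphio} subject to (C), (R), and (M). The underlying principle is that since each $\phi_n$ is a natural transformation between functors out of the category $\mathcal{O}(T_n)$, it is determined by its components on a generating set of objects together with compatibility on generating morphisms; the operadic compatibility that relates different arities is then fixed by condition (C2) of the Lifting Theorem.

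For the forward direction, I would start from a coherent family $(\phi_n)$ and restrict it to generating objects $o\in\mathcal{O}(T_n)$ to obtain the isomorphisms~\eqref{eqnphio}. The self-inverse condition (C1) at $o$ yields (C); naturality of $\phi_n$ applied to a generating morphism $r\colon o\to o'$ yields the commuting square (M); and if $o=\mathcal{O}(\Omega)(o_1,\dots,o_\ell)=\mathcal{O}(\Omega')(o_1',\dots,o_k')$ is a relation among generators, then (C2) expresses $\phi_n^o$ in two ways in terms of the components at the $o_i$ and the $o_j'$, whose equality is exactly (R). For the converse, I would construct $\phi_n^o$ for an arbitrary object $o\in\mathcal{O}(T_n)$ by picking a presentation $o=\mathcal{O}(\Omega)(o_1,\dots,o_\ell)$ in terms of generators, using the standard factorization $\tau_n\circ\Omega=\Omega^{(n)}\circ\sqcup_i\tau_{n_i}^{k_i}$, and defining $\phi_n^o$ by the formula forced by (C2) out of the $\phi_{n_i}^{o_i}$ and $A_{\Omega^{(n)}}$. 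Condition (R) ensures this is independent of the chosen presentation, and naturality with respect to arbitrary morphisms then follows by induction from (M) via functoriality and compatibility with operadic composition.

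It remains to verify that the globally constructed $(\phi_n)$ satisfies (C1) and (C2); both are pointwise respectively operadic compatibility conditions, and they follow inductively from (C) on generating objects and from the defining formula derived from (C2). The main obstacle is precisely the well-definedness step in the converse direction: one must ensure that (R), imposed only on relations between generators, suffices to guarantee independence from any two decompositions of a given operation. This is a standard coherence argument for presentations of algebraic structures; it can be handled by induction on the complexity of the decomposition, making essential use of the fact that, for a morphism $\Omega$ in $\RForests$ expressed as a composite, the standard factorization of $\tau_n\circ\Omega$ is compatible with grouping the factors via operadic composition.
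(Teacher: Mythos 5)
Your proposal is correct and follows exactly the route the paper intends: the corollary is obtained from the Lifting Theorem~\ref{thmlifting} by observing that the coherent family $(\phi_n)$ is determined by its components on generating objects (naturality on generating morphisms giving (M), condition (C1) giving (C), and condition (C2) together with the standard factorization giving (R) and the extension formula), and the paper itself gives no further proof beyond this. The well-definedness step you flag (independence of the chosen decomposition, resting on uniqueness and compositional compatibility of the standard factorization) is indeed the only point requiring care, and your treatment of it is at the same level of rigor as the paper's.
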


\begin{proof}
Assume we are given a cyclic algebra $A$ over $\mathcal{O}$.
Then the evaluation of the isomorphisms~\eqref{isophineqn0} on a generating operation $o$ gives rise to the 2-isomorphism $\phi_n^o$. Their naturality with respect to generating morphisms amounts exactly to~(M).
Condition~(C1) and~(C2) in Theorem~\ref{thmlifting} above imply~(C) and~(R), respectively.

Conversely, let us assume that we are given 
a non-cyclic algebra $A$ together with
isomorphism $\phi_n^o$ of the form~\eqref{eqnphio}
 for all generating objects
 subject to (C), (R) and~(M).
We need 
to define $\phi_n$ 
\begin{equation}
\begin{tikzcd}
\mathcal{O}(T_n) \ar{rr}{\mathcal{O}(\tau_n) } \ar[swap]{dd}{A_{T_n}} & & \mathcal{O}(T_n)  \ar{dd}{A_{T_n}} \ar[Rightarrow, shorten >= 10, shorten <= 10]{ddll}{\phi_n} \\
& & \\
\End_\kappa ^X(T_n) \ar[swap]{rr}{\End_\kappa^X(\tau_n) } & & \End_\kappa ^X(T_n)
\end{tikzcd} 
\end{equation} 
at an arbitrary object $O\in \cat{O}(T_n)$. We can write $O$ non-uniquely as $O=\cat{O}(\Omega)(o_1,\dots , o_m)$ for a rooted tree $\Omega \colon \sqcup_{i=1}^m T^{(i)} \to T_n $ and generating objects $o_i\in \cat{O}(T^{(i)})$. Now the standard factorization $\tau_n \circ \Omega = \Omega' \circ (\sqcup \tau_{i}^{k_i})$ allows us to define $\phi_n^O$ in terms of the $\tau_n^{o_i}$ through the right hand side of point~(C2)
in Theorem~\ref{thmlifting}. Condition~(R) is equivalent 
 to this being well-defined. The collection of the $\phi_n^O$ assemble into a natural transformation $\phi_n$ by condition~(M)
 since every morphism can be written as a combination of generating morphisms. We need to verify that $\phi_n$ satisfies condition (C1) and (C2). Condition~(C1) follows directly from~(C). Moreover, the extension of $\phi_n^o$ (for generating objects) to $\phi_n^O$
 (for arbitrary objects)
  is precisely constructed in such a way that~(C2) holds as soon as it is well-defined.      
	\end{proof}

	\section{Cyclic associative algebras in a symmetric monoidal bicategory and Grothendieck-Verdier structures\label{seccycas}}
The 
cyclic associative operad $\As : \Forests \to \Set$ sends a corolla with edges $E$ to the set of cyclic orders on $E$. 
By means of the symmetric monoidal functor $\Set \to \Cat$ forming the discrete category for a given set, we consider $\As$ as a category-valued cyclic operad.
In this section, we characterize cyclic associative algebras in an arbitrary symmetric monoidal bicategory and afterwards specialize to the symmetric monoidal bicategory $\Fin$ to find a connection to Grothendieck-Verdier duality in the sense of Boyarchenko-Drinfeld \cite{bd}.

\subsection{A characterization of cyclic associative algebras in a symmetric monoidal bicategory}
	First we present the characterization of cyclic associative algebra in an \emph{arbitrary} symmetric monoidal bicategory.
	In order to do this in a compact way, we introduce the following notion:
	
	\begin{definition}\label{def-self-dual-algebra}
		A \emph{self-dual algebra} in a symmetric monoidal bicategory $\cat{M}$ is an object $X$ with the following structure:
			\begin{itemize}
			
			\item[(M)] The object $X$ is endowed with the structure of an associative algebra up to coherent isomorphism
			whose product, unit, associator and unitors we denote as follows:
			\begin{equation}\label{eqnmonoidal}
				\tikzfig{monoidal}
			\end{equation}
			(The term `up to coherent isomorphism' includes the standard coherence conditions on the associators and unitors \cite[Section~VII.1]{maclane}, in particular the pentagon axiom.)
			\item[(P)] The object $X$ is endowed with a non-degenerate symmetric pairing $\kappa : X\otimes X\to I$
			whose  coevaluation   we  denote by $\Delta:I\to X\otimes X$.
			We denote  by
			\begin{equation}\label{eqnsymmetryiso}
				\tikzfig{symmetry} 
			\end{equation}
			the symmetry isomorphisms.

			\item[(Z)] The product $\mu$
			and the pairing $\kappa$ come with an isomorphism
			\begin{equation}\label{eqnOmegaiso0}
				\tikzfig{cyclicity2} \ . 
			\end{equation}
		\end{itemize}
		This data is subject to the following relations:
		\begin{itemize}
			\item[(H1)]	The  isomorphism 
			\begin{align}\label{eqndefpsi}
				\tikzfig{psi} 
			\end{align}
			makes the
			hexagon
			\begin{align}
				\tikzfig{psirel}
			\end{align}
			commute.
			
			\item[(H2)] The isomorphism
			\begin{equation}
				\tikzfig{condgamma}
			\end{equation}
			induced by $\gamma$ agrees with the one induced by the left unitor $\ell$.

		\end{itemize}
		\end{definition}

	\begin{theorem}\label{thmstrucasalg}
		The structure of a cyclic associative algebra on an object $X$
		in a symmetric monoidal bicategory $\cat{M}$
		 amounts precisely to the structure of a self-dual algebra on $X$.
	\end{theorem}

	\begin{proof}
		By Definition~\ref{defmodalgM} a cyclic associative algebra in a symmetric monoidal bicategory $\cat{M}$ is an object $X\in\cat{M}$ and a morphism $A: \As \to \End_\kappa^X$ 
		of cyclic operads from the cyclic associative operad to the cyclic endomorphism operad formed by a non-degenerate symmetric pairing $\kappa$ on $X$.
		This already gives us the non-degenerate symmetric pairing mentioned in point (P).

		We conclude now from the Lifting Theorem~\ref{thmlifting} that a cyclic algebra $A: \As \to \End_\kappa^X$ on $(X,\kappa)$ precisely amounts to the following:
		\begin{itemize}
			\item[($*$)] A \emph{non-cyclic} $\cat{M}$-valued algebra over $\As$, i.e.\ an associative algebra in $\cat{M}$ up to coherent isomorphism, thereby giving us precisely part~(M) in Definition~\ref{def-self-dual-algebra}. The fact that an associative algebra up to coherent isomorphism yields the algebraic structure listed under point~(M) with the coherence conditions described in \cite[Section~VII.1]{maclane}, most notably the pentagon axiom, is  well-known, but a concrete reference to a proof that, without further non-trivial comparisons, is valid in our framework is not easy to give. We can, however, extract it from the literature as follows: Denote by $\As^{\otimes}$ the symmetric monoidal enveloping category of $\As$ \cite[Definition~1.7]{horel}. Then associative algebras in $\cat{M}$
			up to coherent isomorphism
			 are symmetric monoidal functors $\As^{\otimes}\to\cat{M}$ up to coherent isomorphism. In this picture, we can see that they are exactly algebras over the $\infty$-operad $\As$, or equivalently $E_1$, in the sense of Lurie~\cite[Chapter~3]{lurieHA}. The needed statement is then given in \cite[Example~5.1.2.4]{lurieHA}. A detailed derivation of the coherence conditions can be extracted by restricting the statements of \cite[Section~4]{timw} from $E_2$ to $E_1$. 
			(It might be helpful to mention that \cite[Section~6]{timw} contains also a detailed unpacking of the notion of $\infty$-operads and their algebras from 
			 \cite{lurieHA} with a special focus on the bicategorical situation.)

			\item[$(**)$]  Natural isomorphisms as given by the Lifting Theorem~\ref{thmlifting} subject to the coherence conditions (C1) and (C2) also given there.
		\end{itemize}
		The proof will proceed in two steps: In step~\ref{proofAs1}, we use the Lifting Theorem in terms of generators and relations (Corollary~\ref{remgenrel}) to explicitly give the natural isomorphisms from $(**)$ and their coherence conditions. In step~\ref{proofAs2}, we prove that these are equivalent to (Z), (H1) and (H2) in the statement of the Theorem.
		
	\begin{enumerate}[label=(\roman*)]
	
	\item\label{proofAs1}        In order to explicitly describe the natural isomorphisms appearing in $(**)$, we use Corollary~\ref{remgenrel} where we have spelled out the Lifting Theorem for the situation that the underlying operad is given in terms of generators and relations.

			For the associative operad, a very easy presentation in terms of generators and relations is available: There are three generating operations
				\begin{align}
			\tikzfig{asop}\label{eqngenas}
			\end{align}
			corresponding to the operadic identity (our operads do not have an operadic identity by default, so it has to be included into the list of generators), the monoidal unit and the monoidal product, respectively,
			subject to the relations
				\begin{align}
			\tikzfig{asop2}
			\end{align}
			(corresponding to associativity and unitality, respectively) and the rather trivial relations for the operadic identity which we do not include here. All these relations hold strictly, but
			we still always obtain algebras up to coherent isomorphism
			because by the conventions set in Section~\ref{seccycmodalg0} all functors are weak. 
			Let us recall that a cyclic permutation acts trivially on the generating operations. Note that this does not imply that it acts trivially on all composed operations.

			We now spell out Corollary~\ref{remgenrel} for the associative operad: From \eqref{eqnphio},
			 we obtain an isomorphism for each non-nullary generator:
		\begin{itemize}
			
			\item The isomorphism that the operadic identity gives rise to agrees with the symmetry isomorphism of the pairing.
			Point (C) in Corollary~\ref{remgenrel} tells us that the square of the first of these isomorphism is the identity,
			but this already holds  because of the symmetry of the pairing.
			
			\item The product generator $\mu$ gives an isomorphism 
	\begin{align}\label{eqncyclicity}
	\tikzfig{cyclicity}
	\end{align}
	whose threefold composition is the identity.

	\end{itemize}
Point (M) in Corollary~\ref{remgenrel} is not relevant 
for the associative operad because the latter is discrete;
it has no morphisms between operations.
According to point (R) in Corollary~\ref{remgenrel}, there is a relation for each of the three non-trivial relations (As), (LU) and (RU). 
The relations corresponding to the operadic unit do not induce any additional 
conditions due to our conventions laid out in Definition~\ref{defmodalgM} which allow us to assume without loss of generality that the  operadic unit acts as the identity of $X$.

	Let us first derive the relation coming from associativity (As).
	 For notational convenience, we introduce  the operation $m : X\otimes X\otimes X\to I$ encoding the combination of $\kappa$ and $\mu$:
				\begin{align}
			\tikzfig{mandmu}
			\end{align}
	Then the associativity relation induces an isomorphism
	\begin{align}\label{Eq: alpha}
	\tikzfig{alpha}
	\end{align}
	build from the associator $\alpha$ and the snake isomorphism.
	The relation coming from associativity (As) 
	now corresponds to the equality of the two natural isomorphisms 
	\begin{align}
	\tikzfig{2maps}
	\end{align}  
	we can build from $\alpha'$ and $\Omega$. Concretely, this condition can be
	rewritten in terms of the commutativity of the diagram
			\begin{align}
			\tikzfig{coherent_cyclic}
			\end{align}
	When reformulated in terms of $\mu$ and $\alpha$,  this amounts to the commutativity of
	\begin{equation}\label{eqnr30}
				\tikzfig{r3}
				\end{equation} 
					Similarly, the left and right unitality relation (LU) and (RU)
				give us the following commuting diagrams:

				\begin{equation}\label{eqnr10}
				\tikzfig{r2}
				\end{equation}

				\begin{equation}\label{eqnr20}
				\tikzfig{r1}
				\end{equation}
				In the relation corresponding to (LU), $\Omega^2$ appears since acting with 
				the generating cyclic permutations on the composed operation on the left corresponds to acting twice with the generator of the cyclic permutations on $\mu$.
				This concludes the derivation of all the  structure and conditions coming from the Lifting Theorem.

	\item\label{proofAs2}
		Let us summarize step~\ref{proofAs1}: A cyclic associative algebra on $(X,\kappa)$ amounts precisely to the structure of an associative algebra
		up to coherent isomorphism
		 on $X$
		together with an isomorphism $\Omega$ from \eqref{eqncyclicity} whose three-fold composition is the identity and which makes \eqref{eqnr30}, \eqref{eqnr10} and \eqref{eqnr20} commute. 
		It remains to prove that there is the following 1:1 correspondence of structure and relations:
		\begin{align}
		  \text{$\Omega$ as in \eqref{eqncyclicity} subject to $\Omega^3=\id$ and \eqref{eqnr30}, \eqref{eqnr10} and \eqref{eqnr20}}               \ \xleftrightarrow{\ \ \ 1:1 \ \ \ } \  \text{$\gamma$ as in (Z) subject to (H1) and (H2)} \ .  
		\end{align} 
		We establish the two directions of this correspondence separately:
		\begin{itemize}
		\item[$(\longrightarrow)$] We send $\Omega$ to the isomorphism
		\begin{align}\label{eqndefgamma}
		\tikzfig{Def_gamma} \ . 
		\end{align}
		In order to prove that $\gamma$ actually satisfies (H1), we need the following preliminary consideration: We insert  
	 in \eqref{eqnr30}  the monoidal unit into the first argument from the left and obtain the following  diagram in which the outer diagram commutes thanks to \eqref{eqnr30} (we suppress the unitors):\enlargethispage*{1cm}
		\begin{align}
		\tikzfig{r3_id}
		\end{align}
	Now we observe:	The upper triangle commutes thanks to the symmetry requirements on $\Sigma$, the left middle square commutes since
		the horizontal composition of 2-morphisms applied to \emph{different} 1-morphisms 
		is commutative, and the right middle square commutes since the snake isomorphisms
		are chosen to be coherent. 
		Therefore, the outer diagram and all inner diagrams except for the lower pentagon commute.  From this we conclude that
		 the lower pentagon commutes as well, and this tells us that $\Omega$ can be written as the following composition:
			\begin{align}\label{eqndefomega}
				\tikzfig{Def_Omega}
				\end{align}
		One can now directly verify that $\Omega^3=\id$ implies the property (H1) for $\gamma$ (or rather for $\psi$ that is used to state (H1)).
		Property~(H2) follows 
		the diagram
		\begin{align}
			\tikzfig{r1-1}
		\end{align}
		in which the square commutes by \eqref{eqnr20}
		and the triangle is one of the standard coherence conditions for any non-cyclic associative algebra in a symmetric monoidal bicategory.
		By definition the composition of the lower horizontal arrows is $\gamma_{u,-}$. Since $\Sigma$ squares to the identity, (H2) follows.

	\item[$(\longleftarrow)$] Starting from the isomorphism $\gamma$ subject to (H1) and (H2), we can define $\Omega$ via 
		\begin{align}\label{eqndefomega2}
		\tikzfig{Def_Omega2}
	\end{align}
After all, we already know that $\Omega$ must be of this form. 
It remains to prove that $\Omega$ --- if defined that way --- satisfies $\Omega^3=\id$ and \eqref{eqnr30}, \eqref{eqnr10} and \eqref{eqnr20}. 
This boils down to writing out the corresponding diagrams and filling them in 
with smaller squares and triangles using the fact that the vertical 
composition of 2-morphisms applied at different 1-morphisms is 
commutative and the fact that the snake isomorphisms are chosen to be 
coherent. 
Let us give the details: \begin{itemize}

	\item For the proof of~\eqref{eqnr10},
	consider the following diagram:
	\begin{equation}
		\tikzfig{4-5}
	\end{equation}
	The outer diagram is really~\eqref{eqnr10} by definition.
	The commutativity follows from the commutativity of all the subdiagrams. The commutativity of the subdiagrams is clear (or holds by definition) for all except~$(*)$. The commutativity of~$(*)$ follows from (H1) and the symmetry property that $\psi$ inherits from $\Sigma$.

	\item Similarly to~\eqref{eqnr10},
	the commutativity of~\eqref{eqnr20} follows from the commutativity of the following diagram:
	\begin{equation}
		\tikzfig{r10_check}
	\end{equation}

	\item The relation $\Omega^3 =\id$ follows from~(H1).

	\item The proof of~\eqref{eqnr30} follows from a computation similar to the ones leading to~\eqref{eqnr10} and~\eqref{eqnr20}.

\end{itemize}

	\end{itemize}
The assignments $(\longrightarrow)$ and $(\longleftarrow)$ are inverse to each other:
Suppose we start with $\Omega$ and define $\gamma$ via~\eqref{eqndefgamma} and define $\Omega'$ using $\gamma$ via \eqref{eqndefomega2}. We need to show $\Omega'=\Omega$. But this is a consequence of~\eqref{eqndefomega}.
Next suppose we start with $\gamma$ and define $\Omega$ via~\eqref{eqndefomega} and use the so-defined $\Omega$ to define $\gamma'$ via~\eqref{eqndefgamma}. By definition $\gamma'$ is then composition in clockwise direction in the following diagram:
\begin{equation}
	\tikzfig{inverseproof}
\end{equation}
The triangle on the left commutes by (H2). The two squares commute because the horizontal composition of 2-morphisms applied to different 1-morphisms is commutative. Since $\Sigma$ squares to the identity, $\gamma'=\gamma$.
This proves	 that the assignments $(\longrightarrow)$ and $(\longleftarrow)$ are inverse to each other and completes step~\ref{proofAs2} and hence the proof. \enlargethispage*{1cm}
\end{enumerate}
			\end{proof}

	\subsection{Relation between cyclic associative algebras and Grothendieck-Verdier categories}
	In order to phrase Theorem~\ref{thmstrucasalg} in terms of Grothendieck-Verdier duality, we recall
	the definition of the latter from \cite{bd} (Grothendieck-Verdier categories have
	 been considered earlier in \cite{barr} under the name \emph{$\star$-autonomous categories}). We will use conventions dual to the ones in \cite{bd}. This is merely for convenience and does not make an essential difference, see Remark~\ref{remgvalt}.

	\begin{definition}\label{defGV}
		A \emph{Grothendieck-Verdier category} is a monoidal category $\cat{C}$ together with an 
		object $K \in \cat{C}$ such that
		$\cat{C}(K,X \otimes -)$
		is representable for every $X\in\cat{C}$
		and such that the functor $D:\cat{C}\to\cat{C}^\opp$
		sending $X$ to a representing object $DX$ for $\cat{C}(K,X \otimes -)$ is an equivalence.
				 The object $K$ is referred to as \emph{dualizing object}. The functor $D$ is referred to as \emph{duality functor}.
	\end{definition}

In more detail, the functor $D$ is defined by \emph{choosing} an object $DX \in \cat{C}$ and a natural isomorphism $\cat{C}(K,X \otimes -)\cong \cat{C}(DX,-)$ for every $X \in \cat{C}$.
The assignment $X\mapsto DX$ extends to a functor by the Yoneda Lemma.
Note that representability of $\cat{C}(K,X \otimes -)$ is a property; 
the pair of the choice of $DX$ as representing object and the isomorphism 
$\cat{C}(K,X \otimes -)\cong \cat{C}(DX,-)$, however, is only 
unique up to canonical isomorphism.
While $D$ is only essentially unique in the sense just explained, 
the requirement that $D$ is an equivalence 
does not depend on the choice involved in the definition of $D$.

	\begin{remark}\label{remgvalt}
		Definition~\ref{defGV} means that for some distinguished object $K \in \cat{C}$ we have fixed isomorphisms
		\begin{align}
		\cat{C}(K,X\otimes Y) \cong \cat{C}(DX,Y)
		\end{align}
		natural in $X$ and $Y$. Since $I\otimes -$ is isomorphic to the identity functor, we obtain \begin{align} K\cong DI  \label{eqnKandI}
			\end{align}
		by a canonical isomorphism.
		In \cite{bd}
		natural isomorphisms
		$\cat{C}(X\otimes Y,K) \cong \cat{C}(X,DY)$ are used instead.
		Both definitions are equivalent via categorical duality
		in the following sense:
		If a category $\cat{C}$ with monoidal product $\otimes$ has a Grothendieck-Verdier structure with duality $D$ according to Definition~\ref{defGV}, then $\cat{C}^\opp$ equipped with $\otimes^\opp$ (here, the `$\opp$' on the monoidal product means that we pass to the opposite category \emph{and} flip the tensor factors) is a Grothendieck-Verdier category with duality $D^\opp : \cat{C}^\opp \to \cat{C}$ in the sense of  \cite{bd}.
	\end{remark}
	
	\begin{remark}[Normalized duality]\label{normalizedrem}
		We have seen in~\eqref{eqnKandI} that, regardless of \emph{how} the duality functor is defined, there is always a canonical isomorphism $K\cong DI$.
		In fact, it is most convenient to have $\cat{C}(K,I\otimes -)$ represented through $\cat{C}(K,I\otimes -)\stackrel{\ell}{\cong} \cat{C}(K,-)$, where $\ell$ is the left unitor. Then $DI=K$ strictly on object level. We will refer to any duality functor that extends these choices made for the monoidal unit as \emph{normalized}. By the essential uniqueness of duality functors, it can always be assumed without loss of generality that the duality functor is normalized.
		\end{remark}

	\begin{example}\label{Ex:rigid}
		Every (right) rigid monoidal category is an example of a Grothendieck-Verdier category.
		Recall that a monoidal category $(\cat{C},\otimes, I )$ is \emph{(right) rigid} if every object $X\in \cat{C}$
		admits a right dual $X^{\vee}$. This is an object $X^{\vee}\in \cat{C}$
		together with an evaluation map $\ev_X \colon X^\vee \otimes X \longrightarrow  I $ and 
		coevaluation map $\operatorname{coev}_X \colon  I  \longrightarrow X\otimes X^\vee$ which satisfy the usual snake relations. We can define a
		Grothendieck-Verdier structure on the monoidal category $\cat{C}$ that consists of the object $K= I $ 
		and the natural isomorphisms
		\begin{align}
		\cat{C}( I ,X\otimes Y)& \longrightarrow \cat{C}(X^\vee,Y) \\ 
		\left( f\colon  I  \rightarrow X\otimes Y \right) &\longmapsto \left( X^\vee \xrightarrow{\id_{X^\vee}\otimes f} X^\vee \otimes X\otimes Y  \xrightarrow{\ev_X\otimes \id_Y} Y \right) \ \ 
		\end{align}
		for all $X,Y\in \cat{C}$. 
	\end{example}

\begin{example}\label{examplecomplement} The following example is well-known:
	For a set $X$, denote by $\wp(X)$ the category of subsets of $X$ with inclusions as morphisms. The union provides a monoidal structure on $\wp(X)$ with monoidal unit $\emptyset\in\wp(X)$. For $U\in \wp(X)$, denote by $\comp(U)\in\wp(X)$ the complement. The canonical isomorphisms
	\begin{align} \wp(X)(X,U\cup - )\cong \wp(X)(\comp (U) , - ) \end{align}
	endow $(\wp(X),\cup)$ with a Grothendieck-Verdier structure with dualizing object $X$ and duality $\comp$. If $X$ is not the empty set, this provides us with an example of a Grothendieck-Verdier category 
	which does not come from a rigid monoidal category 
	in the sense of Example~\ref{Ex:rigid}.
\end{example}

\begin{definition}
	A Grothendieck-Verdier category whose dualizing object coincides up to isomorphism
	 with the monoidal unit is called an \emph{r-category}.
	\end{definition}

By Example~\ref{Ex:rigid} every rigid category can be seen as an r-category, but by \cite[Example~0.9]{bd} it is false that conversely every r-category 
comes from a rigid monoidal structure.

		Since a Grothendieck-Verdier structure is a weakening of rigidity, one might hope that there is also a notion of a pivotal structure. Boyarchenko and Drinfeld \cite[Definition~5.1]{bd}
	propose the following (again, we present the dualized version, see Remark~\ref{remgvalt}):
	
	\begin{definition}\label{defpivbd}
		A \emph{pivotal structure} on a Grothendieck-Verdier category $\cat{C}$ with dualizing object $K$ and duality $D$ is the choice of an isomorphism
		\begin{align}
		\psi_{X,Y} : \cat{C}(K, X\otimes Y)\to\cat{C}(K,Y\otimes X)\    \label{eqnpiviso}
		\end{align}
		natural in $X,Y \in \cat{C}$ satisfying
		\begin{align}
		\label{eqncohpivotal1}	
		\psi_{X,Y}=\psi_{Y,X} ^{-1}
		\end{align} 
		and making the diagram
		\begin{equation}
		\begin{tikzcd}[column sep=0.02in]
		\cat{C}(K,(X\otimes Y)\otimes Z) \ar{rr}{\psi_{X\otimes Y,Z} } & & \cat{C}(K,Z\otimes (X\otimes Y) ) \ar{rr}{\cat{C}(K,   \alpha_{Z,X,Y} ) } && \cat{C}(K,(Z\otimes X)\otimes Y) \ar{dl}{\psi_{Z\otimes X,Y} } \\ 
		&  \cat{C}(K,X\otimes (Y\otimes Z)) \ar{lu}{\cat{C}(K,\alpha_{X,Y,Z}) }  && \cat{C}(K,Y \otimes (Z\otimes X)) \ar{ld}{\cat{C}(K,   \alpha_{Y,Z,X}   ) } \\ && \cat{C}(K,(Y\otimes Z)\otimes X) \ar{lu}{\psi_{Y\otimes Z, X} }
		\end{tikzcd} \label{eqncyclictriangle}
		\end{equation}
		commute
		for $X,Y,Z\in \cat{C}$. Here $\alpha$ is the associator of the monoidal category $\cat{C}$. 
	\end{definition}

	\begin{remark}\label{Rem:D^2=1}
		By \cite[Proposition~5.7]{bd},
		a pivotal structure amounts precisely to a natural monoidal isomorphism $D^2 \cong \id_\cat{C}$ whose component at the unit $I$ is the canonical isomorphism $D^2I\cong I$. 
	\end{remark}

	\begin{example}
		The notion of a pivotal structure on a Grothendieck-Verdier category generalizes the notion of a pivotal structure on a rigid monoidal category: Recall that a \emph{pivotal structure} on a rigid monoidal category $(\cat{C},\otimes, I )$
		is a monoidal natural isomorphism $\omega \colon -^{\vee \vee}\Longrightarrow \id_{\cat{C}}$. This induces a pivotal structure for the corresponding rigid Grothendieck-Verdier structure (see~Example~\ref{Ex:rigid}) by sending a morphism $f: I \to X\otimes Y$ to 
		\begin{align}
		I \ra{\operatorname{coev}_{X^\vee}} &\ X^\vee \otimes X^{\vee \vee}\cong X^\vee \otimes (I\otimes X^{\vee\vee})\\ 
		\ra{\id_{X^\vee}\otimes( f \otimes \id_{X^{\vee \vee}})} &\  X^\vee \otimes(( X \otimes Y) \otimes X^{\vee \vee})           \cong ((X^\vee \otimes X) \otimes Y) \otimes X^{\vee \vee}              \\ \ra{(\ev_{X}\otimes \id_Y) \otimes \omega_X }   &\ (I\otimes  Y)\otimes X\cong Y\otimes X \  .
		\end{align}  
	\end{example}

By a Grothendieck-Verdier category in $\Fin$ we mean an object $\cat{C}\in\Fin$ together not only 
with a Grothendieck-Verdier structure on the underlying category, but actually a lift of all the 
structure to structure living inside $\Fin$. This means in particular that the monoidal product 
will be left exact by construction.

Non-degenerate symmetric pairings
in $\Fin$
are intimately related to the morphism spaces by Lemma~\ref{lemmakappastar} (this was a consequence of the Eilenberg-Watts calculus for finite categories).
Now a careful comparison of the characterization of cyclic associative algebras in Theorem~\ref{thmstrucasalg} and the axioms
of a pivotal Grothendieck-Verdier category leads to the following specialization of Theorem~\ref{thmstrucasalg}:

\begin{theorem}\label{thmcyclas}
The structure of a cyclic associative algebra in $\Fin$ 
amounts precisely  to a pivotal Grothendieck-Verdier category in $\Fin$.
\end{theorem}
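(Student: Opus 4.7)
The plan is to invoke Theorem~\ref{thmstrucasalg}, which already describes cyclic associative algebras in an arbitrary symmetric monoidal bicategory $\cat{M}$ as the data (M) (a homotopy coherent associative algebra), (P) (a non-degenerate symmetric pairing $\kappa$), (Z) (a cyclicity isomorphism $\gamma$) subject to the single hexagon coherence (H); then to translate this data specialized to $\cat{M}=\Fin$ into the language of pivotal Grothendieck-Verdier categories. The essential bridge is the Eilenberg-Watts calculus of Section~\ref{secfinpairing}: by Lemma~\ref{lemmakappastar} and Proposition~\ref{propkappaisequiv}, a non-degenerate symmetric pairing $\kappa$ on $\cat{C}\in\Fin$ induces an equivalence $-^\kappa\colon\cat{C}\to\cat{C}^\opp$ together with canonical natural isomorphisms $\kappa(X,Y)\cong\cat{C}(X^\kappa,Y)$, so in $\Fin$ a non-degenerate pairing is literally a duality.

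Starting from a cyclic associative algebra in $\Fin$ I would set the dualizing object to be $K:=I^\kappa$ (where $I$ is the monoidal unit) and the duality functor to be $D:=-^\kappa$, which is an equivalence by non-degeneracy. The defining Grothendieck-Verdier isomorphism $\cat{C}(K,X\otimes Y)\cong\cat{C}(DX,Y)$ then comes from the cyclicity datum $\gamma$ in (Z): specializing one of its three arguments to $I$, absorbing the resulting unitor, and applying Lemma~\ref{lemmakappastar} yields exactly this isomorphism. The symmetry $\Sigma$ of (P) transports, again through Lemma~\ref{lemmakappastar}, into a natural isomorphism $\psi_{X,Y}\colon\cat{C}(K,X\otimes Y)\to\cat{C}(K,Y\otimes X)$, and the involutivity $\Sigma^2=\id$ becomes exactly the pivotal axiom \eqref{eqncohpivotal1}. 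This $\psi$ coincides, under the Eilenberg-Watts identifications, with the $\psi$ already appearing inside Theorem~\ref{thmstrucasalg} via \eqref{eqndefpsi}, and the hexagon (H) then unpacks into the cyclic triangle \eqref{eqncyclictriangle}.

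The converse is obtained by running the construction backwards. Given a pivotal Grothendieck-Verdier category $(\cat{C},\otimes,K,D,\psi)$ in $\Fin$, set $\kappa(X,Y):=\cat{C}(K,X\otimes Y)$ with symmetric structure $\Sigma:=\psi$; since the representing object of $\kappa(X,-)$ is $DX$, one has $-^\kappa\simeq D$, which is an equivalence, so $\kappa$ is non-degenerate. The cyclicity $\gamma$ is recovered from the Grothendieck-Verdier isomorphism and the associator, and (H) follows from the cyclic triangle \eqref{eqncyclictriangle}. The two passages are mutually inverse essentially by construction.

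The hardest part of the proof will be the coherence bookkeeping required to match (H) with \eqref{eqncyclictriangle}. The latter is a diagram of natural transformations between hom-spaces built purely from associators of $\cat{C}$, whereas (H) lives inside $\Fin$ and additionally involves snake isomorphisms of the non-degenerate pairing, unitors of the algebra structure, and the defining datum of $\gamma$. Tracking all these coherence morphisms through the Eilenberg-Watts equivalence, together with the representability results of \cite{fss} used in Section~\ref{secfinpairing}, is tedious but routine; no new ideas beyond Theorem~\ref{thmstrucasalg} and the tools of Section~\ref{secfinpairing} should be required.
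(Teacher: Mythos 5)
Your proposal is correct and follows essentially the same route as the paper's proof: specialize Theorem~\ref{thmstrucasalg} to $\Fin$, use Lemma~\ref{lemmakappastar} and Proposition~\ref{propkappaisequiv} to turn the non-degenerate symmetric pairing into the duality equivalence $D$ with $K=DI$, read the datum (Z) as the Grothendieck-Verdier isomorphisms $\cat{C}(K,X\otimes Y)\cong\cat{C}(DX,Y)$, and match the symmetry of $\kappa$ and the hexagon (H) with the pivotal axioms \eqref{eqncohpivotal1} and \eqref{eqncyclictriangle}. The only difference is presentational: you spell out the converse construction explicitly, which the paper leaves implicit in its ``amounts precisely to'' formulation.
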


\begin{proof}
	The result is a straightforward specialization of Theorem~\ref{thmstrucasalg} once we take the description of
	non-degenerate symmetric pairings in $\Fin$ given in Lemma~\ref{lemmakappastar} into account.
	More precisely, a  non-degenerate symmetric pairing $\kappa : \cat{C}\boxtimes\cat{C}\to\FinVect$ can be equivalently written as $\kappa(X,Y)=\cat{C}(DX,Y)$, where $D:\cat{C}\to\cat{C}^\opp$ is an equivalence.
	
	Taking this into consideration, Theorem~\ref{thmstrucasalg} allows us to conclude that a cyclic associative algebra  $\cat{C}\in\Fin$ is precisely the following structure: \begin{itemize}
		\item By point~(M) we obtain a monoidal structure on $\cat{C}$.
		
		\item By point~(P) --- without exploiting the symmetry yet --- $\cat{C}$ comes with a non-degenerate pairing $\kappa$ which can equivalently be described by an equivalence $D:\cat{C}\to\cat{C}^\opp$. 
		
		\item By point~(Z) we obtain natural isomorphisms $\cat{C}(K,X\otimes Y)\cong \cat{C}(DX,Y)$ with $K=DI$. For $X=I$, this isomorphism is induced by the left unitor thanks to (H2).
		
		\item The structure in the three preceding points describes precisely a Grothendieck-Verdier category in $\Fin$
		with normalized duality (which one can always assume without loss of generality by Remark~\ref{normalizedrem}).
		The symmetry part of~(P) and condition~(H1) now amount precisely to a pivotal structure on this Grothendieck-Verdier category. 
		In more detail, the fact that the symmetry isomorphisms of $\kappa$ square to the identity gives us \eqref{eqncohpivotal1}, and the commutativity of the diagram in~(H1) gives us~\eqref{eqncyclictriangle}.
		\end{itemize}
	\end{proof}

\begin{remark}\label{remalttolex}
	The characterization of cyclic associative algebras in Theorem~\ref{thmstrucasalg} works in an \emph{arbitrary} symmetric monoidal bicategory.
	But the translation to pivotal Grothendieck-Verdier categories (Theorem~\ref{thmcyclas})
	makes use of the specialization to the symmetric monoidal bicategory $\Fin$.
	For example, Theorem~\ref{thmcyclas} fails in $\Cat$ because for any cyclic associative algebra in $\Cat$ the underlying category has to be dualizable in $\Cat$. But by Remark~\ref{remcatdual} this actually implies that this category is equivalent to the one-point category. Clearly, this is not the case for all Grothendieck-Verdier structures in $\Cat$.
\end{remark}

\begin{remark}\label{remgvfa}
Cyclic associative algebras in the symmetric monoidal category of vector spaces are symmetric Frobenius algebras (if one, as in this article, considers \emph{symmetric} pairings).	
Motivated by this fact, one might refer to the algebraic structure that in Theorem~\ref{thmstrucasalg} (and the specialization in Theorem~\ref{thmcyclas}) we prove to be equivalent to a cyclic algebra over the associative operad (in a symmetric monoidal bicategory) as a \emph{symmetric Frobenius algebra up to coherent isomorphism}.
The pairing $\kappa :X\otimes X\to I$ for the underlying \emph{non-symmetric} Frobenius structure (i.e.\ the structure in Theorem~\ref{thmstrucasalg} without the isomorphism $\Sigma$ and  (H1) and (H2)) can be described in terms of a trace $\varepsilon : X \to I$. This follows from arguments given by Street in \cite[Proposition~3.2]{street}.
While this allows us to describe parts of the algebraic structure found in Theorem~\ref{thmstrucasalg} in an equivalent way, it does, as far as we see, not provide a shortcut in the proof of the relation to \emph{operadically} defined cyclic associative algebras --- and it is the latter that we need for applications in low-dimensional topology.
	\end{remark}

\section{Categorical framed little disks algebras and ribbon Groth\-en\-dieck-Verdier structures\label{secfE2GV}}
The \emph{operad $E_2$ of little disks} is the topological operad whose space $E_2(r)$ of $r$-ary operations is given by the space of embeddings $\left(\disk^2 \right)^{\sqcup r} \to \disk^2$ of the disjoint union of $r$ disks into another disk that are composed of a translation and rescaling, see \cite{bv68,bv73} and additionally \cite{FresseI} for a textbook introduction. 
There is a well-known extension of the little disks operad, the \emph{operad $\framed$ of framed little disks}
which allows, 
in addition to translations and rescalings, also rotations, i.e.\
\begin{align}
\framed(r)=E_2(r)\times (\mathbb{S}^1)^{\times r}      \label{eqnformulaframed}
\end{align} as spaces, where the $r$ factors of
 $\mathbb{S}^1$ encode the rotation parameter. 
The operadic composition is given by composition of maps and can be obtained 
from the semidirect product construction in \cite{salvatorewahl}.
In \cite{budney} it is proven that $\framed$ is equivalent to the operad of conformal balls, and that the latter comes with a cyclic structure. 
Therefore, $\framed$ is equivalent to a cyclic operad. 

This section is devoted to the characterization of cyclic algebras  over $\framed$ with values in an arbitrary symmetric monoidal bicategory.

\subsection{A groupoid model for the cyclic operad of framed little disks}
From \eqref{eqnformulaframed} it follows that the framed little disks operad is aspherical. A groupoid model can be given in terms of ribbon braids \cite{salvatorewahl}. The purpose of this subsection is to also describe the cyclic structure on the level of this groupoid model.
We do this by defining a cyclic structure on the ribbon braid operad. 
In~\cite{CIW} 
Campos, Idrissi and Willwacher give a 
cyclic action on a parenthesized version of $\RBr$. The action defined 
in this section is the 
 analogue of this cyclic structure on $\RBr$. 
In Section~\ref{secszeroframed}, we prove that it corresponds to the \emph{geometric} cyclic structure.

We first recall the operad of ribbon braids:
Denote by $\pi \colon RB_n \longrightarrow \Sigma_n$ the canonical map from
the ribbon braid group on $n$ strands to the symmetric group on $n$ letters. This map defines an $RB_n$-action on $\Sigma_n$. The corresponding action groupoids $\Sigma_n \DS RB_n$ for varying $n\ge 0$
provide a groupoid model $\RBr$ for the framed little disks operad, see \cite{WahlThesis} and \cite[Section~7]{salvatorewahl} for details.
Concretely, the groupoid of 
arity $n$-operations is given by $ \RBr(n) \coloneqq \Sigma_n \DS B_n\times (\star \DS \Z)^n$,
where $\star \DS \mathbb{Z}$ is the groupoid with one object with automorphism group $\mathbb{Z}$.
We refer to \cite[Section~1.2]{WahlThesis} for the details on the operad structure;
we will also momentarily give a presentation in terms of generators and relations.
The operad $\RBr$ being a groupoid model for $\framed$ means that there is an equivalence
\begin{align}
\RBr \xrightarrow{\ \simeq  \ } \Pi \framed \label{BBrframedequiv1}
\end{align}
of operads. 
On the level of objects, the category-valued operad $\RBr$ 
coincides with the associative operad $\As$ meaning that it has a binary associative and unital operation $\mu$
(the generators and relations were listed explicitly on page~\pageref{eqngenas}).
As a consequence, there is a canonical operad map
$\As \longrightarrow \RBr $. 
However, $\RBr$ has non-trivial morphisms
 in the groupoids of operations that are generated under operadic composition by 
 the braiding $c\colon \mu \longrightarrow  \mu^\opp$ and
the balancing $\theta\in \RBr(1)$.
In order to formulate the relations for these morphisms, we will use a graphical notation: We denote the braiding by an overcrossing and the balancing by a filled disk (or ellipse depending on the number of strands it has to be stretched over). 
The inverse braiding $\mu^\opp \to \mu$ is denoted by an undercrossing and the inverse balancing by a white disk with  boundary.
This graphical calculus for the \emph{morphisms} in the groupoids of operations is different from the graphical calculus for the \emph{objects} of operations that we have already used. In order to avoid confusion, we will print the graphical computations for the morphisms with fat lines (this is merely for convenience; it is logically not needed). 
The generators that $\RBr$ has in addition to those of $\As$ and their relations are now as follows (when listing generating morphisms for groupoid-valued operads, we do not list their inverses explicitly):
	\begin{align}
\tikzfig{fE2}
\end{align}
The relations (B1) and (B2) require some explanation and are only well-defined after the relations from the associative operad are imposed. For 
example, the equation (B1) has to be understood as the commutativity of the 
diagram
\begin{equation}\label{Eq: H2}\tag{B1}
		\tikzfig{H2}
\end{equation}
in $\RBr(3)$. We have used here the usual notation $\circ_i$ for the partial 
composition of operations. The relation (B2) has to be interpreted 
analogously. Clearly, these relations just encode the usual hexagon relations
for braided monoidal categories.

We need to elaborate on a notational subtlety:
In the graphical calculus, we have the braiding $c:\mu \to \mu^\opp$ and the inverse braiding $c^{-1} : \mu^\opp \to \mu$. But by symmetry, there is also a braiding $\mu^\opp \to \mu$ and its inverse $\mu \to \mu^\opp$. In other words, we have four types of braidings if we take into account not only the type of crossing, but also the source and target operation. Very often this distinction is suppressed in the notation. In the sequel, this distinction will be relevant. For this reason, we will (whenever needed) indicate the source and target operation with numbers as follows:
	\begin{align}\label{eqnallthebraidings}
\tikzfig{otherbraidings}
\end{align}

In order to equip $\RBr$ with a cyclic structure, we first exhibit the cyclic action on the  groupoids of operations $\RBr(n)$ for $n=1,2$:

\begin{itemize}
	\item The $\Z_2$-action on 
	$\RBr(1)$ is defined to be trivial. 
	
	\item The generator $\tau_3$ of $\mathbb{Z}_3$ acts by the functor $Z(\tau_3)\colon \RBr(2)\longrightarrow \RBr(2)$ which is defined to be the identity on objects. On morphisms it is given as follows:
	\begin{itemize}
		\item The action $Z(\tau_3)$ 
		on the twist on one of two strands or both strands is actually already fixed by the definition on $\RBr(1)$ and the requirements for a cyclic structure:
			\begin{align}
		\tikzfig{action_on_twists}
		\end{align}

		\item     The definition $Z(\tau_3)$
		on the braiding is given by:
			\begin{align}\label{eqncyclicactiononbraiding}
			\hspace{-\leftmargin}
		\tikzfig{cyclic_action_on_fE2}
		\end{align}
		Note that one of these assignments fixes the remaining three.

	\item The cyclic action on the other generators is trivial. 
	\end{itemize}
\end{itemize}

\begin{lemma}\label{lemmacycliclowdim}
	$Z(\tau_2):\RBr(1)\to\RBr(1)$ and $Z(\tau_3):\RBr(2)\to\RBr(2)$  are functors that
	extend the permutation actions of the operad structure on $\RBr$. More precisely,
	$Z(\tau_2)$ yields a $\Sigma_2$-action on $\RBr(1)$, and $Z(\tau_3)$ extends the $\Sigma_2$-action on $\RBr(2)$ to a $\Sigma_3$-action.
	\end{lemma}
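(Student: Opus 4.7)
The plan is to verify, separately for each of the two functors, that the assignments on generators respect the relations in $\RBr$, and then to check the required group-theoretic relations for the resulting group actions.

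First, I would prove that $Z(\tau_2)\colon\RBr(1)\to\RBr(1)$ and $Z(\tau_3)\colon\RBr(2)\to\RBr(2)$ are well-defined functors. Since $\RBr(1)$ is the free groupoid on the balancing generator $\theta$ and $Z(\tau_2)$ is declared to be the identity, there is nothing to check in that case. For $Z(\tau_3)$, I would go through the generating morphisms of $\RBr(2)$ (the braiding $c\colon\mu\to\mu^{\opp}$ and the three twists $\theta\circ_i\mu$) and check that the defining relations of $\RBr$ restricted to arity two — in particular, the hexagons such as \eqref{Eq: H2} (which really live in $\RBr(3)$) and the compatibilities between $\theta$ and $c$ — are preserved. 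For the hexagons, this reduces to taking the cyclic image of both sides using the formulas in \eqref{eqncyclicactiononbraiding}, carefully tracking the source and target labeling as in \eqref{eqnallthebraidings}, and reducing again via the two hexagons and the balancing relation.

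Second, to verify that $Z(\tau_2)$ defines a $\Sigma_2$-action on $\RBr(1)$, it suffices to note that $Z(\tau_2)^2=\id$ trivially. For the $\Sigma_3$-action on $\RBr(2)$, I would use the presentation $\Sigma_3=\langle \sigma,\tau\mid\sigma^2=\tau^3=(\sigma\tau)^2=1\rangle$, where $\sigma$ is the transposition coming from the operadic $\Sigma_2$-action (swapping the two inputs of $\mu$) and $\tau=Z(\tau_3)$. The relation $\sigma^2=1$ is automatic from the operad structure. To check $Z(\tau_3)^3=\id$, I would apply $Z(\tau_3)$ three times to each generator: on the twists this is immediate since each twist is cyclically permuted to another twist and returns after three steps, while on the braiding $c$ the three applications produce a composite of braidings and twists that reduces to $c$ by a diagrammatic calculation using the balancing relation. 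Finally, the mixed relation $(\sigma Z(\tau_3))^2=\id$ reduces to an equality of two explicit composites of braids and twists in $\RBr(2)$ that can again be verified graphically.

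The main obstacle is the well-definedness check for $Z(\tau_3)$, specifically the compatibility with the hexagon axioms: even though the hexagons look cyclically symmetric at the pictorial level, the labeling of strands and the distinction between the four types of braidings displayed in \eqref{eqnallthebraidings} means that the images of the two sides of a hexagon under $Z(\tau_3)$ are a priori very different composites, and reducing one to the other requires systematic use of both hexagons together with the balancing relation. This computation can be organized by observing that the prescription \eqref{eqncyclicactiononbraiding} is essentially forced by the geometric cyclic structure on the operad of conformal balls (to be matched in Section~\ref{secszeroframed}), so no genuine obstruction can occur; nevertheless, an explicit combinatorial verification by drawing and manipulating ribbon diagrams is unavoidable.
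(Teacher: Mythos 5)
Your proposal is correct and follows essentially the paper's route: functoriality of $Z(\tau_3)$ is established by checking on generators that the arity-two compatibility between balancing and braiding (relation (T1)) is preserved, and the $\Sigma_3$-action is obtained by verifying $Z(\tau_3)^3=\id$ on the twists and on $c$ together with the mixed relation with the transposition (your $(\sigma\tau)^2=1$ is equivalent to the paper's $\tau_3\sigma_{1,2}=\sigma_{1,2}\tau_3^2$). One caveat: the hexagons (B1), (B2) are relations among morphisms of $\RBr(3)$, so they are irrelevant to the well-definedness of the functor $Z(\tau_3)\colon\RBr(2)\to\RBr(2)$ and indeed cannot yet be acted on at this stage; their compatibility with the cyclic action is checked only later, in Proposition~\ref{Prop: Combinatorial model for cyclic structure}, after the action on composite operations is defined via \eqref{Eq: action on composition} and \eqref{Eq: action on composition2}, so that part of your plan belongs to the extension step rather than to this lemma.
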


\begin{proof}
	For $Z(\tau_2)$, the statement is clear.

	In order to prove that $Z(\tau_3)$ is a functor, we need to verify that it preserves 
	the relation (T1). This is indeed the case:
		\begin{align}
	\tikzfig{checkT1}
	\end{align}

	In order to see that $Z(\tau_3)$ extends the $\Sigma_2$-action on $\RBr(2)$ to a $\Sigma_3$-action,
	we need to verify two things:
	
	\begin{itemize}
		
		\item The relation $\tau_3 \sigma_{1,2}=\sigma_{1,2} \tau_3^2$ with the transposition $\sigma_{1,2}$ is respected. Indeed:
			\begin{align}
		\tikzfig{comp_sigma_tau}
		\end{align}
		
		\item The functor $Z(\tau_3)$ triples to the identity. For this, it suffices to check the following:
			\begin{align}
		\tikzfig{triple}
		\end{align}
		\end{itemize}  
	\end{proof}

\begin{proposition}\label{Prop: Combinatorial model for cyclic structure}
	The cyclic action from Lemma~\ref{lemmacycliclowdim} naturally extends to the structure of a $\Cat$-valued cyclic operad on $\RBr$.
\end{proposition}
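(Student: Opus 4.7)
The plan is to construct $\RBr$ as a symmetric monoidal functor $\Forests \to \Cat$ starting from the operad structure $\RForests \to \Cat$ and extending it through cyclic permutations $\tau_{n+1}$. Because the target category of $\RBr$ on objects coincides with the discrete associative operad $\As$, and $\As$ is well-known to carry a canonical cyclic structure (cyclic orders on legs), the cyclic $\Sigma_{n+1}$-action on objects of $\RBr(n)$ is automatically given for every $n$ and extends the specifications of Lemma~\ref{lemmacycliclowdim} (the latter being trivial on the object $\mu \in \RBr(2)$, which is consistent). So the real work is to extend the cyclic action to the morphisms in each $\RBr(n)$.

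First I would set up a presentation: any morphism of $\RBr(n)$ is generated, under the operad structure and the $\Sigma_n$-action, by the braiding $c$ (residing in $\RBr(2)$) and the balancing $\theta$ (residing in $\RBr(1)$), modulo the hexagon relations (B1), (B2), the twist relation (T1), the balancing/braiding compatibility (T2), and the mixed symmetry (S). With the cyclic action on these generators already prescribed by Lemma~\ref{lemmacycliclowdim} (trivial on $\theta$, the geometric rule~\eqref{eqncyclicactiononbraiding} on $c$), I define $Z(\tau_{n+1})$ on an arbitrary morphism by applying the standard factorization~\eqref{eqnfactorization}: write $\tau_{n+1}\circ \Omega = \Omega^{(n+1)}\circ \bigsqcup_i \tau_{n_i+1}^{k_i}$ and use the already-defined $Z(\tau_{n_i+1})$ on factors of lower arity. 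This is the ``formal'' definition; its content lies entirely in showing it is well-defined.

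The verification has three ingredients:
\begin{enumerate}
\item[(a)] Well-definedness of the functor $Z(\tau_{n+1})\colon \RBr(n)\to\RBr(n)$, i.e.\ preservation of the defining relations of $\RBr$ after cyclic transport. Thanks to the operadic naturality of the definition, this reduces to checking that the images of (B1), (B2), (T1), (T2), (S) under cyclic transport are again equivalent to (B1), (B2), (T1), (T2), (S). For (T1), (T2), (S) the check is essentially that of Lemma~\ref{lemmacycliclowdim}. For (B1) and (B2), the cyclic image produces a hexagon whose six edges are the cyclic images of the six original edges; using the fact that cyclic permutation is implemented on $c$ by the geometric recipe in \eqref{eqncyclicactiononbraiding} and keeping track of the source/target annotations listed in \eqref{eqnallthebraidings}, each transported hexagon rearranges into one of the hexagons already present in $\RBr$.
\item[(b)] The identity $Z(\tau_{n+1})^{n+1} = \id_{\RBr(n)}$. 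On objects this is the corresponding identity in the cyclic associative operad. On morphisms it follows from the identity for $c$ and $\theta$ verified in Lemma~\ref{lemmacycliclowdim} together with compatibility with operadic composition.
\item[(c)] Functoriality of $\RBr \colon \Forests \to \Cat$, i.e.\ the factorization relation \eqref{eqnfactorization} is respected. This is essentially built into the definition, but requires checking that the two sides agree when applied to each generator; this is again local and reduces to low-arity identities.
\end{enumerate}

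The main obstacle is the combinatorial bookkeeping in step~(a) for the hexagons (B1) and (B2): transporting them cyclically produces hexagons in which the roles of ``input'' and ``output'' strands are permuted, so one must track carefully which of the four variants of braiding in \eqref{eqnallthebraidings} each edge represents after transport, and then recognize the outcome as a valid relation of $\RBr$. The smoothest way to organize this is to treat the three braidings $c_{12}, c_{13}, c_{23}$ in the two hexagons as a single orbit under cyclic rotation of the three legs of $T_2$, so that the cyclic image of one hexagon becomes the other, and both are available in $\RBr$. Once these identities are in place, the coherence of the whole construction is forced by the universal property of the presentation.
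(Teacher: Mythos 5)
Your overall route is the same as the paper's: the cyclic action is extended from the generators $\theta$ and $c$ to all of $\RBr$ by compatibility with operadic composition (the paper uses the explicit formulas $\tau(a\circ_1 b)=\tau(b)\circ_{m+n-1}\tau(a)$ and $\tau(a\circ_i b)=\tau(a)\circ_{i-1}b$ from Markl--Shnider--Stasheff, which is what your appeal to the standard factorization amounts to), and the only substantive point is then well-definedness, i.e.\ that the transported relations still hold; the unary relations and equivariance are already handled in Lemma~\ref{lemmacycliclowdim}, so everything hinges on the hexagons (B1) and (B2).

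Exactly there your proposal has a gap. You suggest organizing the hexagon check by viewing the cyclic action as a permutation of the braiding variants of \eqref{eqnallthebraidings}, so that ``the cyclic image of one hexagon becomes the other''. But the cyclic generator does \emph{not} act on $c$ by a mere relabelling: by the definition \eqref{eqncyclicactiononbraiding} it sends the braiding to a composite involving the inverse balancing --- this is precisely what later yields the relation (RB) in Theorem~\ref{thmcycrbr}, and it matches the geometric identification $Z(\tau_3)(c)\simeq \bar{c}\circ_1\theta^{-1}$ used in the proof of Proposition~\ref{Thm: Relation ribbon braids and E2}. Consequently the cyclically transported hexagon is not one of the two hexagons already present in $\RBr$: it is a new diagram mixing braidings and balancings, and its commutativity is a genuine identity in the ribbon braid groupoids that has to be verified. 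The paper does this by writing out the transported diagram \eqref{eqncyclicB1} (using $\tau\circ\sigma_{23}=\sigma_{12}\circ\tau$ to normalize the symmetric group actions) and checking the resulting graphical identity among ribbon braids, and similarly for (B2). Since this verification is the entire content of the proposition, deferring it to bookkeeping based on an incorrect description of the action on $c$ leaves the proof incomplete; the remaining points (b) and (c) of your plan are fine and agree with the paper.
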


\begin{proof}
	We can extend the cyclic action
	from from Lemma~\ref{lemmacycliclowdim}
	to all of $\RBr$
 by using the following general formula for the cyclic 
	action on composed operations~\cite[Remark~5.3]{markl}:
	\begin{align}\label{Eq: action on composition}
	\tau(a\circ_1 b) &= \tau(b)\circ_{m} \tau(a) \\ 
	\tau(a\circ_i b) &  = \tau(a)\circ_{i-1} b \label{Eq: action on composition2}
	\end{align}
	Here $a$ and $b$ are arbitrary operations of arity $n$ and $m$ larger than $0$, respectively, and $1<i\leq n$. When $m=0$ and $i\neq 1$, we can use the same formula. 
	However, if $i=1$, we need to use $\tau(a\circ_1 b) = \tau(a)^2\circ_{m-1} b$ instead. 
	For this to be well-defined,
	we need to verify the compatibility with the relations (B1) and (B2) (the compatibility with (T1) and (T2) is already a part of Lemma~\ref{lemmacycliclowdim}):
 We  use \eqref{Eq: action on composition}
 and \eqref{Eq: action on composition2} to apply the cyclic permutation to the diagram~\eqref{Eq: H2} and get
	\begin{align}\label{eqncyclicB1}
	\tikzfig{tH2}
	\end{align}
For the upper horizontal map, no cyclic permutation is applied to $c$ in accordance with \eqref{Eq: action on composition2}.
When evaluating $\tau$ on the operation which is the result of applying $\sigma_{12}$, we use the relation $\tau \circ \sigma_{23}= \sigma_{12}\circ \tau$ to evaluate one 
	of the actions.
	 We now need to verify that \eqref{eqncyclicB1} commutes. This amounts precisely to the identity
	 	\begin{align}
	 \tikzfig{verification_braiding}
	 \end{align} 
	 which can be easily seen to hold.
	A similar computation shows that (B2) is respected. This completes the proof.
	\end{proof}

\subsection{Equivalence of the cyclic structure on $\RBr$ to the one on $\framed$\label{secszeroframed}}
In this section, we show that the cyclic structure defined on $\RBr$ above 
agrees with the cyclic structure on $\framed$ induced by identifying
it with the operad of genus zero surfaces (here, surfaces will always be compact and oriented) and hence also with the cyclic structure found by Budney in \cite{budney}.
 To this end, we will, similarly to \cite{budney}, exhibit an equivalence from $\framed$ to a topological operad $\szero$ built from disk configurations in the 2-sphere (it is essentially the operad of genus zero surfaces). The operad $\szero$ has an obvious cyclic structure, and we prove 
that there is an equivalence $\RBr \simeq \Pi \szero$ of cyclic operads, where the cyclic structure on $\RBr$ is the one given above.

Before defining $\szero$ we need to recall some basic properties of the 
stereographic projection: For every point $p\in \mathbb{S}^2$, the 
stereographic projection is a diffeomorphism $\operatorname{st} \colon \mathbb{S}^2\setminus \{ p \} \longrightarrow T_{-p}\mathbb{S}^2$
from $\mathbb{S}^2\setminus \{ p \}$ to the tangent space to $\mathbb{S}^2$ at $-p$.  We consider $\mathbb{S}^2$ here as 
a submanifold of $\R^3$ and hence can identify the tangent space $T_{-p}\mathbb{S}^2$ with an affine plane in $\R^3$. The canonical scalar product on $\R^3$ 
induces a natural scalar product on $T_{-p}\mathbb{S}^2$.
A vector $x\in T_{-p}\mathbb{S}^2$ on the unit circle in $T_{-p}\mathbb{S}^2$ can be uniquely extended to a positively oriented orthonormal basis and thereby induces a linear isomorphism $T_{-p}\mathbb{S}^2\cong \R^2$.

Let us now define the operad $\szero$: For $n\ge 0$, a map $\sqcup_{j=0}^n\mathbb{D}_j^2 \longrightarrow \mathbb{S}^2$ is called \emph{admissible} if \begin{itemize}
	\item the images of the interiors are pairwise disjoint,
	\item 
for every map $f_j \colon \mathbb{D}_j^2 \longrightarrow \mathbb{S}^2$, the
point $-f_j(0)\in \mathbb{S}^2$ is not contained in the image of $f_j$, 
\item and
the composition $\mathbb{D}_j^2 \ra{f_j} \mathbb{S}^2\setminus \{-f_j(0)\} \ra{\operatorname{st}} T_{f_j(0)}\mathbb{S}^2\cong \mathbb{R}^2 $ is given by a rescaling,
where the last isomorphism is induced, as explained above,  by the choice of a point on the  unit circle of $T_{f_j(0)}\mathbb{S}^2$, namely the intersection of the  unit circle of $T_{f_j(0)}\mathbb{S}^2$ with the line in $T_{f_j(0)}\mathbb{S}^2$ from $0$ to the point $\operatorname{st}\circ f_j\left( \begin{array}{c}
	1 \\ 
	0
\end{array} \right)$.

\end{itemize}
We equip the space of 
admissible maps with the topology induced by the compact-open 
topology on the mapping space.
There is an $\SO(3)$-action on the space of all admissible maps $\sqcup_{i=0}^n\mathbb{D}_i^2 \longrightarrow \mathbb{S}^2$ coming from the $\SO(3)$-action on $\mathbb{S}^2$. 
An operation in $\szero (n)$ is an $\SO(3)$-orbit of admissible maps  $\sqcup_{j=0}^n\mathbb{D}_i^2 \longrightarrow \mathbb{S}^2$.
The partial composition $\underline{f} \circ_\ell \underline{f'}$ of two operations $\underline{f}:\sqcup_{i=0}^n\mathbb{D}_i^2 \xrightarrow{\sqcup f_i} \mathbb{S}^2$ and $\underline{f'}:\sqcup_{j=0}^{n'}\mathbb{D}_j^2 \xrightarrow{\sqcup f'_j} \mathbb{S}^2 $ can be defined as follows:
For $\underline{f'}$, perform a stereographic projection from the center of the zeroth disk. This yields a configuration of $n'$ disks inside a bigger disk in the plane. We insert these $n'$ disks into the $\ell$-th disk in $\underline{f}$ to get the partial composition $\underline{f} \circ_\ell \underline{f'}$.

The permutation group on $n+1$ letters acts on $\szero (n)$ and thereby endows $\szero$ with the structure of a cyclic operad. 
There is a canonical
equivalence of operads
\begin{align} 
\framed \xrightarrow{\ \simeq  \ } \szero \label{eqnframedsurfequiv}
\end{align} 
which sends a disk configuration in the standard disk to a disk configuration in the lower hemisphere  via (the  inverse of) the
stereographic projection and adds the upper hemisphere as the zeroth disk.

The next result shows that the cyclic structure on $\szero$ can be combinatorially described by the cyclic structure on $\RBr$ from Proposition~\ref{Prop: Combinatorial model for cyclic structure}.

\begin{proposition}\label{Thm: Relation ribbon braids and E2}
	There is an equivalence $\RBr \to \Pi \szero$ of  groupoid-valued cyclic operads.
\end{proposition}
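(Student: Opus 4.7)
The plan is to construct the desired equivalence by composing the already established operadic equivalence $\RBr \xrightarrow{\simeq} \Pi\framed$ of \eqref{BBrframedequiv1} with the functor $\Pi\framed \xrightarrow{\simeq} \Pi\szero$ induced by the equivalence~\eqref{eqnframedsurfequiv}. Both are pointwise equivalences of groupoid-valued operads, so the composite is too, and the only remaining task is to check that it intertwines the combinatorial cyclic structure on $\RBr$ from Proposition~\ref{Prop: Combinatorial model for cyclic structure} with the cyclic structure on $\szero$ induced by cyclically permuting the $n{+}1$ labeled disks (using the $\SO(3)$-quotient to bring the result back to a standard form, e.g.\ by applying a rotation of $\mathbb{S}^2$ that geometrically realizes the cyclic relabeling).

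The verification reduces to a finite check in low arities. Namely, $\RBr$ is generated as an operad by $\mu$, $c$ and $\theta$ (in arities $2$ and $1$), and by our own construction the cyclic action on $\RBr(n)$ for $n\ge 3$ is determined from the actions on $\RBr(1)$ and $\RBr(2)$ via the reconstruction formulas \eqref{Eq: action on composition} and \eqref{Eq: action on composition2}. Since the image of the comparison functor satisfies those same compatibility formulas on the $\szero$-side (the cyclic action on $\szero$ is, by definition, the $\Sigma_{n+1}$-action extending the $\Sigma_n$-action of the operad structure), it suffices to check that under the equivalence, the generators transform the same way under the $\mathbb{Z}/2$- and $\mathbb{Z}/3$-actions on $\szero(1)$ and $\szero(2)$ as they do under the actions from Lemma~\ref{lemmacycliclowdim}.

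To carry out this check I would pick symmetric geometric representatives: place the three disks encoding $\mu$ on $\mathbb{S}^2$ at the vertices of an equilateral triangle lying on a great circle, and represent $c$ and $\theta$ by the standard half-braid path of two of these disks and by a full $2\pi$ rotation of one disk, respectively. Since an $\SO(3)$-rotation by $2\pi/3$ permuting the three disks is then an explicit isometry, one can read off that the cyclic action on $\mu$ is trivial, that the action on $\theta$ (and on binary twists) is trivial by symmetry, and one can compare the induced action on a representative path for $c$ with the four formulas in \eqref{eqncyclicactiononbraiding}. The $\mathbb{Z}/2$-action in arity $1$ is analogous and visibly trivial.

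The main obstacle will be the braiding case: rotating a standard representative of the braid around the remaining disk in $\mathbb{S}^2$ and then correcting the output so that each small disk is again presented with the framing coming from stereographic projection at its center produces an extra $\pm 2\pi$ twist on one of the strands that must be absorbed by a balancing. Tracking this framing correction precisely is what matches the geometric cyclic action with \eqref{eqncyclicactiononbraiding}. Once this single computation is done on, say, the first of the four pictures in \eqref{eqncyclicactiononbraiding}, the remaining three follow formally by composition with inverse braidings and by invoking the cyclic group relations already verified in Lemma~\ref{lemmacycliclowdim}, which completes the proof.
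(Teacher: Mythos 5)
Your proposal follows essentially the same route as the paper's proof: the equivalence is taken to be the composite of \eqref{BBrframedequiv1} with \eqref{eqnframedsurfequiv}, and compatibility with the cyclic structures is verified on the generators, with the braiding being the one nontrivial case where the geometric cyclic action produces exactly the twist correction recorded in \eqref{eqncyclicactiononbraiding} (the paper identifies it as $\bar{c}\circ_1\theta^{-1}$). The only point you pass over quickly, and which the paper makes explicit, is that on the level of objects the cyclic permutation acts on $\Pi\szero$ nontrivially only through rescalings of the disks, and the resulting rescaling homotopies $h_{n+1}$ must be (and are) coherent with operadic composition because such rescalings are unique up to higher homotopy in $\Pi\szero$ --- this is what licenses reducing the naturality check to the generating morphisms $c$ and $\theta$.
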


\begin{proof}
	We start by constructing an equivalence $\Phi \colon \RBr \longrightarrow \Pi \szero$ of operads.
	As recalled in \eqref{BBrframedequiv1}, there is an equivalence of operads $\RBr \longrightarrow \Pi \framed$ sending the generator $\mu$ to the disk configuration in the lower left corner
	of Figure~\ref{fig:b}, the braiding to the homotopy which slides disk
	one over disk two (see lower part of Figure~\ref{fig:b}) and the twist to 
	the rotation of a disk by 360 degrees against its orientation. This map is not 
a strict map of operads (associativity is not respected strictly), but 
	only up to coherent isomorphism
	(which in the bicategorical framework is allowed). We can obtain the map as follows: There
	is a zigzag of strict equivalences (where `strict' means that the coherence data is trivial)
	 between $\Cat$-valued operads $\RBr \longleftarrow \catf{PaRB} \to \Pi (\framed)$, where 
	 $\catf{PaRB}$ is the operad of parenthesized braids \cite[Section~3.1]{CIW}, and
	 the first map forgets the parentheses. Now by Proposition~\ref{propwhitehead} we can invert the first map with a non-strict morphism of $\Cat$-valued operads. The composition of this inverse with the second map gives us the desired morphism.   
The equivalence $\Phi \colon \RBr \to \Pi \szero$ is the composition of the equivalence from \eqref{BBrframedequiv1} with the one from~\eqref{eqnframedsurfequiv}. 
	
It remains to show that the equivalence $\Phi$ is compatible with the cyclic structure --- possibly up to coherent isomorphism (again, let us emphasize that our notion of cyclic operads and their morphisms naturally accounts for that). 
For an object $o\in \RBr(n)$,
this compatibility means that the cyclic action on $\Phi(o)$ is trivial up to coherent isomorphism because the cyclic action on $\RBr$ is trivial on the object level.
Indeed, acting with a cyclic permutation on $\Phi(o)$ changes only the sizes of the disks (since $\szero$-operations are $\SO(3)$-orbits,
the configuration stays the same otherwise). 
We denote by $h_{n+1}$ the corresponding rescaling homotopy.  
The rescaling homotopies are coherent in the sense that they are compatible with composition. The reason for this is that the rescalings are unique up to higher homotopy, i.e.\ unique in $\Pi \szero$.

It remains to show that these homotopies form \emph{natural transformations}
	\begin{equation}
	\begin{tikzcd}
	\RBr(n) \ar[rr,"\RBr(\tau_{n+1})"]  \ar[dd,"\Phi"]& & \RBr(n) \ar[dd,"\Phi"]  \\
	& & \\
	\Pi\szero(n) \ar[rr,"{\Pi(\szero)(\tau_{n+1})}",swap] \ar[rruu, Rightarrow, "h_{n+1}"]& & \Pi\szero(n) \ , 
	\end{tikzcd}
	\end{equation}
i.e.\ are compatible with the morphisms in the groupoids of operations.
	It is enough to verify this on the generating morphisms $c$ in arity two and $\theta$ in arity one (because, as argued above, the homotopies are compatible with composition):

	\begin{itemize}
		\item 
The generator $\theta$ is mapped to the automorphism of the configuration in 
Figure~\ref{fig:a} which rotates the first disk $\mathbb{D}^2_1$ by
$-360$ degrees. Applying the cyclic structure maps this to the automorphism of the lower
configuration in Figure~\ref{fig:a} which rotates zeroth disk $\mathbb{D}^2_0$ by $-360$ degrees. 
After applying an element in $\SO(3)$, this agrees with the automorphism of the original 
configuration rotating $\mathbb{D}_0^2$ by $360$ degrees. But this is equivalent 
to rotating $\mathbb{D}_1^2$ by $-360$ degrees. This shows that $h_2$ (which is actually the identity) is natural
with respect to $\theta$. 
		
		\item 
		The image of the braiding under the equivalence $\Phi$ has the following
		 description (using the freedom of choosing representatives in
		the $\SO(3)$-orbits): We rotate $\mathbb{D}_0^2$ by $-180$ degrees,  $\mathbb{D}_1^2$ by 180 degrees and  $\mathbb{D}_2^2$ by 180 degrees, see Figure~\ref{fig:b} for a sketch.   
		Up to a rescaling, which is absorbed by the homotopy $h_3$, the cyclic action produces the morphism rotating $\mathbb{D}_0^2$ by 180 degrees, rotating $\mathbb{D}_1^2$ by $180$ degrees and rotating $\mathbb{D}_2^2$ by $-180$ degrees. Figure~\ref{fig:c} explains that this agrees 
		(after rescaling) with the image of $ \bar{c}\circ_1 \theta^{-1}$  
		under $\Phi$ --- as it should be by the definition of the cyclic action on $\RBr$.
	\end{itemize}
This shows that $\Phi$ is compatible with the cyclic structure and finishes the proof.
	\begin{figure}[h]
	\begin{subfigure}[b]{0.05\textwidth}
		\begin{overpic}[scale=0.18]
			{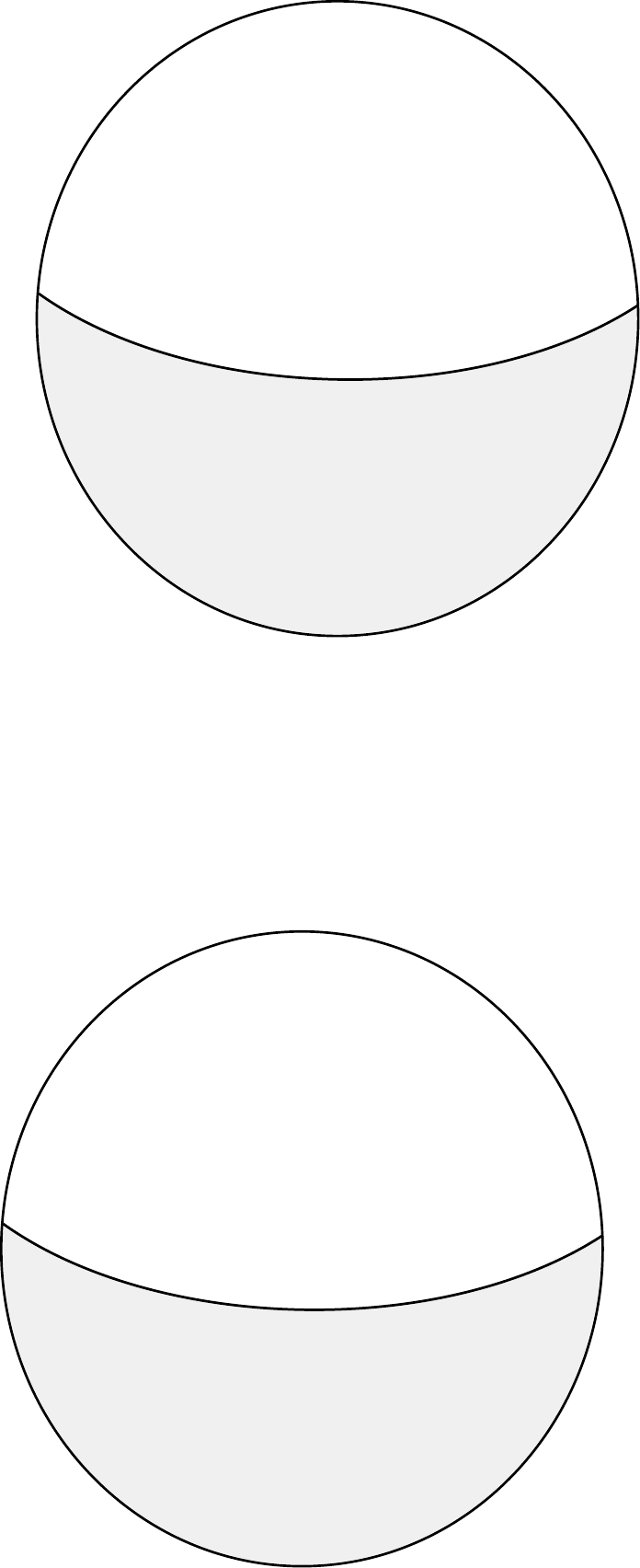}
			\put(18,85){$0$}
			\put(18,65){$1$}
			\put(17,5){$0$}
			\put(17,26){$1$}
		\end{overpic}
		\caption{}
		\label{fig:a}
	\end{subfigure}\hspace{2cm}
	\begin{subfigure}[b]{0.4\textwidth}
		\begin{overpic}[scale=0.18]
			{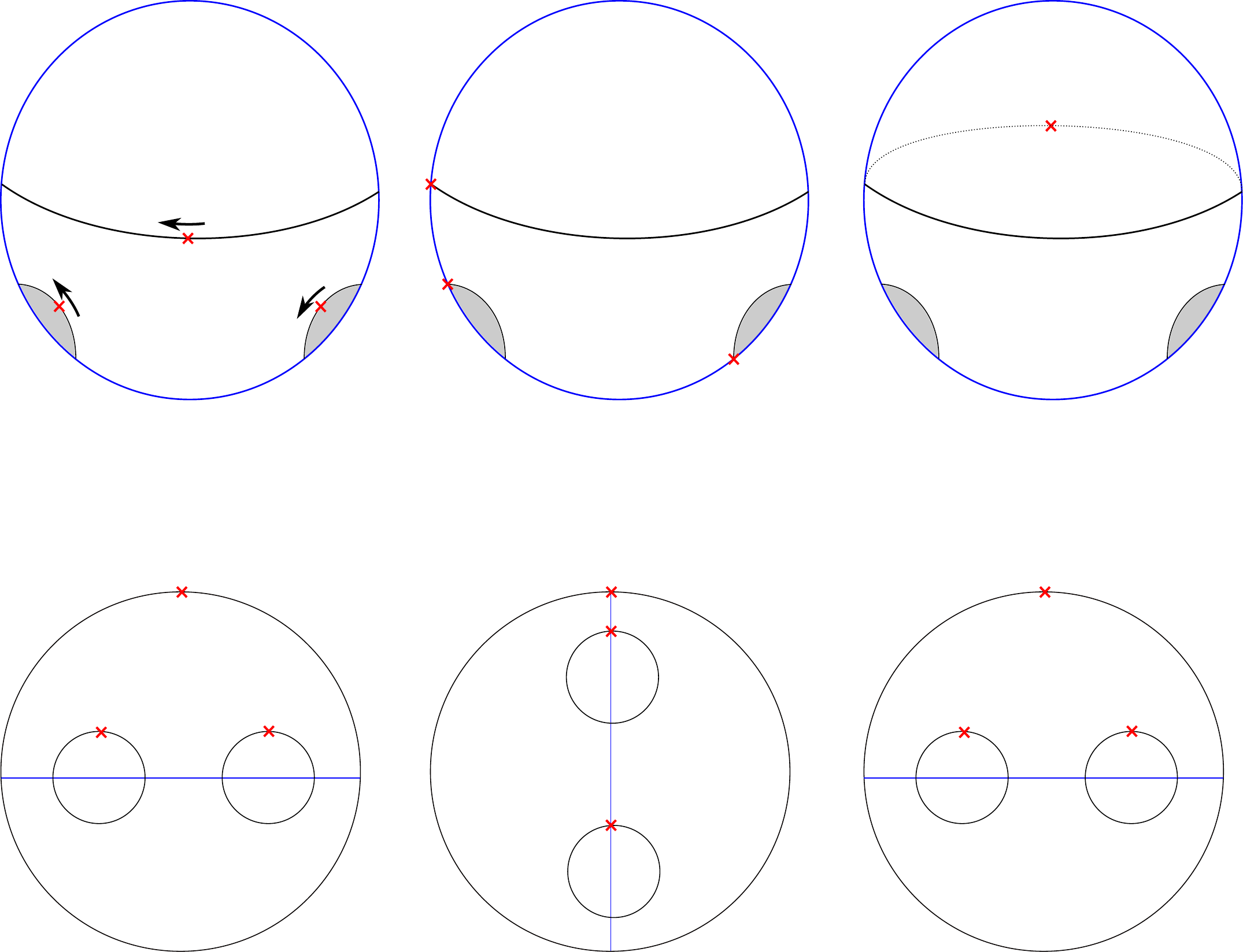}
			\put(12,65){ $0$}
			\put(-1,45){ $1$}
			\put(24,45){ $2$}
			\put(5,12){ $1$}
			\put(18,12){ $2$}
			\put(48,65){ $0$}
			\put(34,45){ $1$}
			\put(61,45){ $2$}
			\put(47,21){ $1$}
			\put(47,4){ $2$}
			\put(86,65){ $0$}
			\put(69,45){ $1$}
			\put(95,45){ $2$}
			\put(74,12){ $2$}
			\put(88,12){ $1$}
		\end{overpic}
		\caption{}
		\label{fig:b}
	\end{subfigure}\hspace{0.8cm}
	\begin{subfigure}[b]{0.4\textwidth}
		\begin{overpic}[scale=0.18]
			{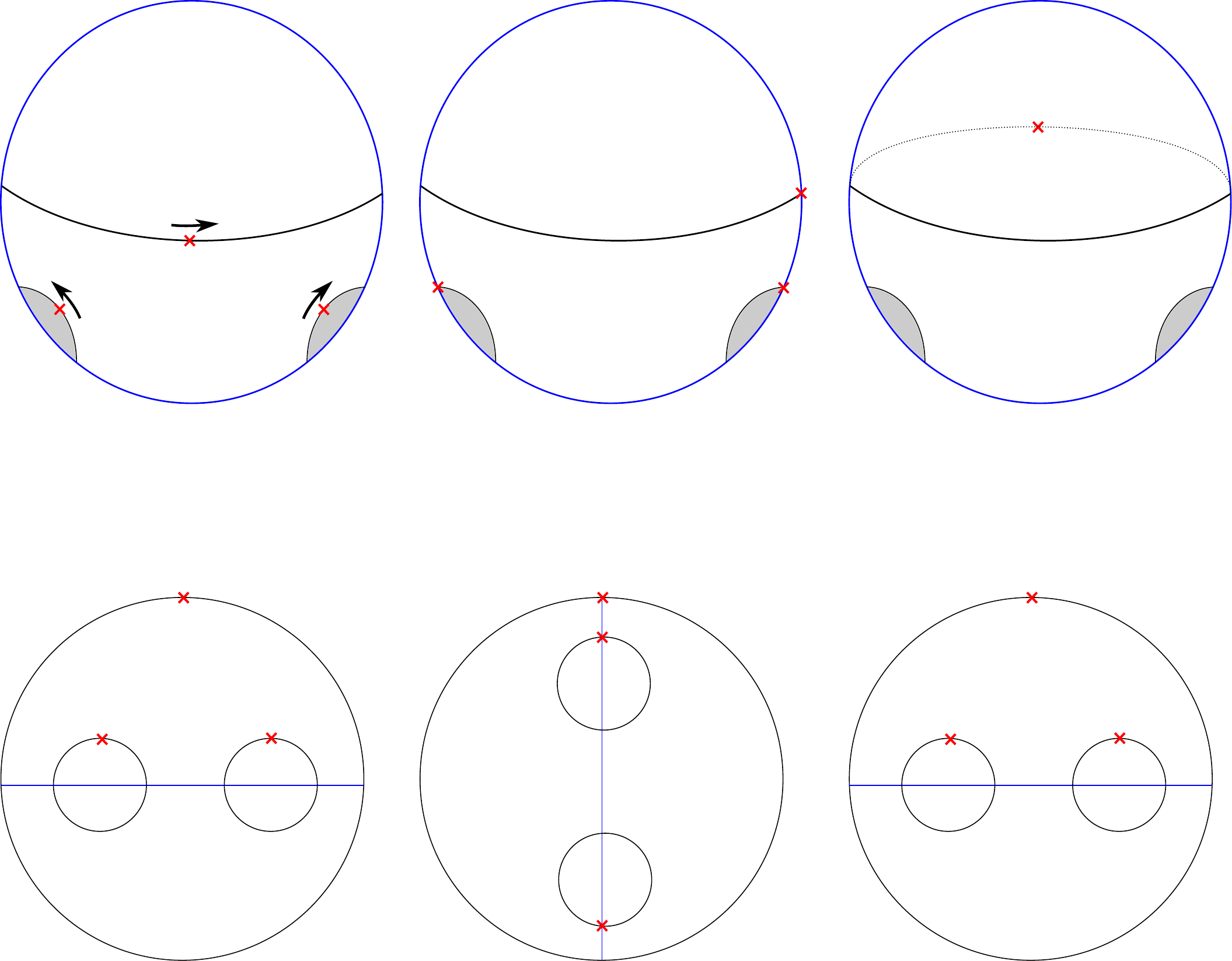}
			\put(12,65){ $0$}
			\put(-1,45){ $1$}
			\put(24,45){ $2$}
			\put(5,12){ $1$}
			\put(18,12){ $2$}
			\put(48,65){ $0$}
			\put(34,45){ $1$}
			\put(61,45){ $2$}
			\put(47,21){ $2$}
			\put(47,4){ $1$}
			\put(86,65){ $0$}
			\put(69,45){ $1$}
			\put(95,45){ $2$}
			\put(74,12){ $2$}
			\put(88,12){ $1$}
		\end{overpic}
		\caption{}
		\label{fig:c}
	\end{subfigure}
	\caption{(a)  The top figure shows the image of the operadic identity in $\RBr$ under the equivalence from 
		Theorem~\ref{Thm: Relation ribbon braids and E2}. The bottom figure shows the same, but \emph{after} the action with a cyclic permutation. The two configurations can be transformed into each other by acting with an element
		of $\SO(3)$. \\
		(b) The top figure shows the path in $\szero(2)$ corresponding to the braiding. The bottom figure is its stereographic projection from the center of $\mathbb{D}_0^2$. We marked the base point of every disk by a cross. The missing crosses on the right
		are on the backside of the sphere. 
		The blue line above is always mapped to the blue line below by the stereographic projection.\\
		(c) The action of $\tau_3$ on the image of $c$ in $\szero(2)$. We use the same drawing conventions as in (b).}
	\label{Fig}
\end{figure} 
\end{proof}

\subsection{A characterization of cyclic $\RBr$-algebras in a symmetric monoidal bicategory}
A (non-cyclic)
$\RBr$-algebra
in a symmetric monoidal bicategory
$\cat{M}$
is a homotopy coherent associative algebra (i.e.\ an object $X\in\cat{M}$ with multiplication $\mu$, associator $\alpha$, unit $u$ and unitors $\ell$ and $r$) plus two isomorphisms, namely a braiding and a balancing
		\begin{align}
\tikzfig{bbmonoid}
\end{align}
subject to the analogue of the relations (T1), (T2), (B1) and (B2), but with the coherence isomorphisms $\alpha$, $\ell$ and $r$ inserted in the necessary places, see \cite{joyalstreet} and also
\cite[Section~4]{timw} and
 \cite[Section~5.4.2]{woike}.
We call $X\in\cat{M}$ endowed with this structure a \emph{balanced braided algebra} in $\cat{M}$.
Taking this notion as a starting point, we can define the following algebraic structure:

\begin{definition}
	A \emph{self-dual balanced braided algebra} in a symmetric monoidal bicategory $\cat{M}$ is a balanced braided algebra $A$ with a non-degenerate pairing $\kappa : A \otimes A \to I$ with 
	an isomorphism $\gamma$ 
	\begin{equation}
		\tikzfig{cyclicity2}
	\end{equation} subject to the following conditions:
\begin{itemize}
	\item[(U)] The isomorphism 
\begin{equation}
	\tikzfig{condgamma}
\end{equation}
induced by $\gamma$ agrees with the one induced by the left unitor $\ell$ (this is condition~(H2) from Theorem~\ref{thmstrucasalg}). 

\item[(PB)] 
 The automorphisms of $\kappa$ induced by $\theta \otimes \id$ and $\id\otimes\theta$ agree:
 \begin{equation}
 	\tikzfig{rt2}
 \end{equation}
\end{itemize}

	\end{definition}

The above definition suggests that a self-dual balanced braided algebra is in particular a self-dual algebra in the sense of Definition~\ref{def-self-dual-algebra}. A priori, this is not obvious, but it follows from the following Lemma:

\begin{lemma}\label{lemmaselfdualbalancedbraided}
	The structure of a self-dual balanced braided algebra on an object $X$ in a symmetric monoidal bicategory amounts precisely to the following:
	\begin{enumerate}
		\item[(BB)]
	The object $X$ carries the structure of a balanced braided algebra in $\cat{M}$ with product $\mu$,
	associator $\alpha$, unit $u$, unitors $\ell$ and $r$, braiding $c$ and balancing $\theta$.
	
	\item[(PZH)] The object $X$ is endowed with a non-degenerate symmetric pairing $\kappa :X\otimes X\to I$ with symmetry isomorphism $\Sigma$ in the sense of Theorem~\ref{thmstrucasalg} (P) and an isomorphism $\gamma$ 
	\begin{equation}
		\tikzfig{cyclicity2}
	\end{equation}
	which satisfy the compatibility conditions (H1) and (H2) from Theorem~\ref{thmstrucasalg}.

	\item[(RM)] 	The following relations are satisfied (the first one is formulated in terms of $\psi$ which was defined using $\gamma$ in \eqref{eqndefpsi}):	\begin{align}
		\tikzfig{relation_braiding}
	\end{align}

\end{enumerate}
	\end{lemma}

\begin{proof}
	We observe 
	that if we are given the structure from the statement of the Lemma, we can forget the symmetry isomorphism. Then we have a self-dual balanced braided algebra. In fact, all axioms hold by definition except for (PB) which, however, easily follows from (RT). 
	
	The non-trivial part is the converse: Suppose we are given a self-dual balanced braided algebra and want to obtain the algebraic structure described in the Lemma.
	The structure and conditions that are part of the structure described in the above Lemma, but are \emph{not} part of the definition of a self-dual balanced braided algebra are the symmetry data for $\kappa$, (H1), (RB) and (RT).
	And in fact, we can show that these four pieces of information are redundant.
	The symmetry isomorphism $\Sigma$ is redundant information because, if~(RB) is supposed to hold, then $\Sigma$ must be given by
	\begin{equation}
		\tikzfig{rb} \label{eqndefsigma}
	\end{equation}
	and in fact, this way of defining $\Sigma$ automatically ensures that it squares to the identity (this follows from (T1), (PB) and (T2)).
	At the same time, any symmetry isomorphism defined using~\eqref{eqndefsigma} automatically satisfies (RB), thereby making (RB) redundant.   Moreover, we note that (H1), when spelled out explicitly, means that the isomorphism
	\begin{align}
		\tikzfig{rb3}
	\end{align}
	is the identity. This indeed follows from a direct computation using~(T1), (PB) and (T2), which makes (H1) redundant.
	Finally, we observe that (RB) implies (RT). 
	\end{proof}

\begin{theorem}\label{thmcycrbr}
	The structure of a cyclic $\RBr$-algebra on an object $X$
	in a symmetric monoidal bicategory $\cat{M}$
	amounts precisely to the structure of a self-dual balanced braided algebra in $\cat{M}$.
	\end{theorem}

The proof of the Theorem follows from Lemma~\ref{lemmaselfdualbalancedbraided} and the following Proposition:

\begin{proposition}\label{proprbrstructure}
		The structure of a cyclic $\RBr$-algebra on an object $X$
	in a symmetric monoidal bicategory $\cat{M}$
	amounts precisely to the  structure described in Lemma~\ref{lemmaselfdualbalancedbraided}.
	\end{proposition}

\begin{proof}
By the Lifting Theorem~\ref{thmlifting} and its version in terms of generators and relations (Corollary~\ref{remgenrel}) a cyclic $\RBr$-algebra amounts precisely to the following:
	\begin{enumerate}[label=(\roman*)]
		\item 
		A non-cyclic $\RBr$-algebra, which gives us precisely the structure of a balanced algebra in point (BB).
		\item Isomorphisms from  generating  objects and the relations between them. But the generating \emph{objects} and their relations are identical for $\As$ and $\RBr$ (the two operads just differ on the level of morphisms), therefore this gives us precisely the isomorphisms and relations found in Theorem~\ref{thmstrucasalg}, i.e.\ (PZH).
		\item Finally, we get according to Corollary~\ref{remgenrel} (M) one relation for each generating morphism:
		\begin{itemize}
			
			\item One relation for the balancing, namely (RT) --- because the cyclic action preserves the balancing on one strand by definition.
			
			\item One relation for the braiding, namely (as follows from \eqref{eqncyclicactiononbraiding}) the commutativity of the following square formulated in terms of $\Omega$ (which is related to $\gamma$ by \eqref{eqndefgamma}): 
			\begin{align}\label{eqnrelationomega000}
				\tikzfig{relation_omega}
			\end{align}
			where the lower arrow is given by $\Omega^{-1}$ which is the value of the natural isomorphism for the cyclic action at $\mu^{\opp}$.
		\end{itemize}
	\end{enumerate}
	In order to complete the proof, it remains to show \eqref{eqnrelationomega000} $\Longleftrightarrow$  (RB):
	
	\begin{enumerate}
		\item[$(\Longrightarrow)$] If we insert in \eqref{eqnrelationomega000} the unit into the first argument from the left, we find after a short calculation (RB) by using the definition of $\gamma$ in terms of $\Omega$ in \eqref{eqndefgamma}, the definition of $\psi$ in terms of $\gamma$ in \eqref{eqndefpsi}, relation~\eqref{eqnr10}, and the fact that braiding with the unit is trivial.

		\item[$(\Longleftarrow)$]
		If we express  $\psi$ entirely in terms of $\Omega$ and $\Sigma$ (using \eqref{eqndefpsi} and \eqref{eqndefgamma}), we can deduce from a lengthy, but straightforward computation that the commutativity of \eqref{eqnrelationomega000}
		is equivalent to the commutativity of the hexagon
		\begin{align}\label{eqnrelationomega0001}
			\tikzfig{510}
		\end{align}
		In order to prove that this hexagon really commutes,
		we replace $\psi$ by means of (RB). 
		The isomorphism obtained by composing in \emph{clockwise} direction in \eqref{eqnrelationomega0001} is now (we suppress the associator in the notation):
		\begin{align}
			\tikzfig{rechnung_5_9}
		\end{align}
		The isomorphism that we arrive at on the right hand side of this equation is the isomorphism obtained by composing in 
		\emph{counterclockwise} direction in \eqref{eqnrelationomega0001}. 
		Hence, \eqref{eqnrelationomega0001} and therefore \eqref{eqnrelationomega000} commutes. 
	\end{enumerate}
	\end{proof}

\subsection{Relation between cyclic framed little disks algebras and ribbon Grothendieck-Verdier categories\label{balancedGV}}
Theorem~\ref{thmcycrbr} can be expressed in terms of ribbon Grothendieck-Verdier structures defined in \cite{bd}:

\begin{definition}\label{Def: blance GV}
	A \emph{braided Grothendieck-Verdier category} is a Grothendieck-Verdier category whose underlying monoidal category is equipped with a braiding $c$.
	A \emph{balancing} on $\cat{C}$ 
	is a natural automorphism of the identity functor $\id_\cat{C}$ whose components $\theta_X : X \to X$ satisfy
	\begin{align}
	\theta_{X\otimes Y}& = c_{Y,X} c_{X,Y} (\theta_X \otimes \theta_Y) \quad \text{for}\quad X,Y \in \cat{C} \label{comptwistbraidingeqn} \ ,\\
	\theta_I &= \id_I \ . \label{eqnribbonunit} 
	\end{align}
	A braided Grothendieck-Verdier structure with a balancing 
	satisfying
	\begin{align}D\theta_X = \theta_{DX} \quad \text{for}\quad X \in \cat{C}  \label{eqnribboncondition}\end{align}
	will be referred to as a \emph{ribbon Grothendieck-Verdier structure}. In that case, the balancing is also called a \emph{ribbon twist}.
\end{definition}

\begin{definition}\label{defcomppivbrbal}
	Let $\cat{C}$ be a braided Grothendieck-Verdier category with pivotal structure $\psi$
	(Definition~\ref{defpivbd})
	and balancing  $\theta$. We call $\psi$ and $\theta$ \emph{compatible} if for all $X,Y \in \cat{C}$ the triangle
	\begin{equation}
	\begin{tikzcd}
	\cat{C}(K,X \otimes Y) \ar[swap]{dd}{\cat{C}(K,c_{X,Y}^{-1})} \ar{rr}{\psi_{X,Y}} & & \cat{C}(K,Y\otimes X)   \\
	& & \\
	\cat{C}(K,Y\otimes X)\ar[swap]{rruu}{\cat{C}(K,\id_Y \otimes \theta_X^{-1}   )} & & 
	\end{tikzcd} \label{compateqn}
	\end{equation} 
	commutes.
\end{definition}

\begin{lemma}\label{lemmapivribbon}
	For any braided Grothendieck-Verdier category $\cat{C}$,  a ribbon twist on $\cat{C}$   gives rise to a unique pivotal structure compatible with the ribbon twist.
	Hence, a ribbon twist on $\cat{C}$ can be equivalently described as a pivotal structure and a ribbon twist  which are compatible.
\end{lemma}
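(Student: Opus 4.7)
The plan is to use the compatibility diagram \eqref{compateqn} directly as a definition, thereby obtaining both existence and uniqueness at once. First I would observe that the commutativity of \eqref{compateqn} is an explicit formula for $\psi_{X,Y}$ entirely in terms of the braiding $c$ and the balancing $\theta$: compatibility forces $\psi_{X,Y}$ to equal the composite $\cat{C}(K,\id_Y \otimes \theta_X^{-1}) \circ \cat{C}(K,c_{X,Y}^{-1})$. Since no choice is left, any pivotal structure compatible with $\theta$ must coincide with this one, establishing uniqueness.

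For existence, take that very composite as the definition of $\psi_{X,Y}$. It is a natural isomorphism between the required hom-functors, being a composition of natural isomorphisms. The work is then to verify the two pivotal axioms of Definition~\ref{defpivbd}. The involutivity condition \eqref{eqncohpivotal1} is checked by expanding $\psi_{Y,X}\circ \psi_{X,Y}$ on an arbitrary $f\colon K \to X\otimes Y$, sliding the twists across the braiding using naturality of $c$, and then applying the ribbon relation \eqref{comptwistbraidingeqn} so that the accumulated twist factors collapse back to the identity; the condition $\theta_I = \id_I$ of \eqref{eqnribbonunit} is what makes the degenerate cases work out. The cyclic-triangle condition \eqref{eqncyclictriangle} is a routine but lengthy diagram chase: the two hexagon axioms for the braiding control how $c$ interacts with $\otimes$-products of three objects, and naturality of $\theta$ together with another use of \eqref{comptwistbraidingeqn} reduces the hexagonal composite to the identity, modulo the associators that appear explicitly in \eqref{eqncyclictriangle}.

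The final equivalence claimed by the lemma --- that a balanced braided Grothendieck-Verdier structure is the same datum as a pivotal balanced braided Grothendieck-Verdier structure whose pivotal structure and balancing satisfy Definition~\ref{defcomppivbrbal} --- then follows immediately. Given a balancing $\theta$, we add the uniquely determined compatible $\psi$; conversely, given the triple, we forget $\psi$. These two operations are mutually inverse by the uniqueness statement. The main obstacle I anticipate is the cyclic triangle \eqref{eqncyclictriangle}: because $\psi$ has to be applied to tensor products of three objects, one has to manipulate iterated braidings of the form $c_{X\otimes Y, Z}$ and keep careful track of how the twist on a tensor product splits under \eqref{comptwistbraidingeqn}. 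The auxiliary condition $D\theta_X = \theta_{DX}$ from \eqref{eqnribboncondition} is not needed in the construction of $\psi$ itself, but ensures in view of Remark~\ref{Rem:D^2=1} that the induced isomorphism $D^2 \cong \id_\cat{C}$ is compatible with the duality $D$ as expected.
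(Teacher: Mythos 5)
Your overall strategy coincides with the paper's: take the commutativity of \eqref{compateqn} as the \emph{definition} of $\psi_{X,Y}$, so that uniqueness is immediate, and then verify the two axioms of Definition~\ref{defpivbd}; your treatment of the cyclic triangle \eqref{eqncyclictriangle} via the hexagon axioms, naturality of $\theta$ and \eqref{comptwistbraidingeqn} also matches the paper's argument.

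However, your verification of the involutivity axiom \eqref{eqncohpivotal1} has a genuine gap. Expanding $\psi_{Y,X}\circ\psi_{X,Y}$ on a morphism $f\colon K\to X\otimes Y$ and sliding the twists through the braiding, relation \eqref{comptwistbraidingeqn} collapses the two inverse twists together with the double braiding to $\theta_{X\otimes Y}^{-1}$, \emph{not} to the identity; by naturality of $\theta$ (a natural automorphism of $\id_\cat{C}$) one gets $\psi_{Y,X}(\psi_{X,Y}(f))=\theta_{X\otimes Y}^{-1}\circ f=f\circ\theta_K^{-1}$. Hence \eqref{eqncohpivotal1} holds precisely when $\theta_K=\id_K$, and this is where \eqref{eqnribboncondition} enters: since $K\cong DI$, one has $\theta_K=\theta_{DI}=D\theta_I=\id_K$, which uses \eqref{eqnribbonunit} \emph{together with} \eqref{eqnribboncondition} at the unit. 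Your explicit claim that $D\theta_X=\theta_{DX}$ is not needed for the construction is therefore false: without it, $K$ is just some (possibly nontrivial) object of a balanced braided category, there is no reason for $\theta_K$ to be trivial, and the proposed $\psi$ would fail \eqref{eqncohpivotal1}; the condition $\theta_I=\id_I$ alone does not repair this, and the closing remark attributing to \eqref{eqnribboncondition} only a compatibility of $D^2\cong\id_\cat{C}$ with $D$ misstates its role. This is exactly the point the paper makes: it deduces \eqref{eqncohpivotal1} from $\theta_K=\id_K$, obtained as a consequence of \eqref{eqnribbonunit} and \eqref{eqnribboncondition}.
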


\begin{proof}
	The statement can be reduced to the following: If we are given a balancing and a braiding and \emph{define} $\psi$ by \eqref{compateqn}, the so-defined $\psi$ is a pivotal structure.
	Indeed, the needed conditions~\eqref{eqncohpivotal1} and~\eqref{eqncyclictriangle}
	 follow
	 from a direct computation using~\eqref{comptwistbraidingeqn}-\eqref{eqnribboncondition}
	 (it is helpful to first deduce $\theta_K=\id_K$). 
	\end{proof}

\begin{remark}
As discussed in Remark~\ref{remgvfa}, we may describe the pairing $\kappa : \cat{C}\boxtimes\cat{C}\to\FinVect$ in terms of a trace $\varepsilon \colon \cat{C} \longrightarrow \FinVect$ related to $\kappa$ by
$\varepsilon(X)= \kappa(I,X)$.
Reformulating Definition~\ref{Def: blance GV} in
terms of $\varepsilon$ gives rise to a structure related to the notion 
of a \emph{contraction} on a balanced braided monoidal category defined in~\cite[Definition 12]{Enriquez}. The key differences,
however,
 are that the pairing defined by
$\kappa(X,Y)\coloneqq \varepsilon(X\otimes Y)$ is not required to be non-degenerate    
and that various coherence isomorphisms (including the pivotal structure) are 
assumed to be the identity in \cite{Enriquez}.
\end{remark}

As for the associative operad, we can express Theorem~\ref{thmcycrbr}
in terms of Grothendieck-Verdier duality.

\begin{theorem}\label{thmcycle2}
	The structure of a cyclic $\RBr$-algebra in $\Fin$ 
	amounts precisely  to a ribbon Grothen\-dieck-Verdier category in $\Fin$.
\end{theorem}

\begin{proof}
	Thanks to Proposition~\ref{proprbrstructure}, a cyclic $\RBr$-algebra in $\Fin$ amounts to
	the algebraic structure described in Lemma~\ref{lemmaselfdualbalancedbraided}, i.e.\ a balanced braided monoidal category in $\Fin$ (this is (BB))
	and a pivotal Grothendieck-Verdier structure for the underlying monoidal structure
	(this is (PZH) as follows from our treatment of cyclic $\As$-algebras in Theorem~\ref{thmstrucasalg} and~\ref{thmcyclas}).
	Therefore, by Lemma~\ref{lemmapivribbon},
	it suffices to prove that (RB) $\Longleftrightarrow$ \eqref{compateqn}
	and that
	(RT) $\Longleftrightarrow$ \eqref{eqnribboncondition}. While the former follows from the definitions, the latter requires a proof:
	Relation (RT) is the commutativity of
	\begin{equation}\label{cyclicinvarianceoftwist}
	\begin{tikzcd}
	\kappa(X,Y) \ar{rr}{\Sigma_{X,Y} } \ar[swap]{dd}{  \kappa(X,\theta_Y)   } & &\kappa(Y,X)  \ar{dd}{  \kappa(Y,\theta_X)   }   \\
	& & \\
	\kappa(X,Y)         \ar{rr}{\Sigma_{Y,X} } & & \kappa(Y,X) \ , 
	\end{tikzcd} 
	\end{equation}
	where  $\Sigma$ is the symmetry isomorphism of $\kappa$.
	By recalling the description of the pivotal Grothendieck-Verdier structure in terms of the symmetric pairing $\kappa$
	(Theorem~\ref{thmcyclas}), we see 
	that (RT) is equivalent to the commutativity of the diagram
	\begin{equation}
	\begin{tikzcd}
	\cat{C}(K,X\otimes Y) \ar{rr}{\psi_{X, Y} } \ar[swap]{dd}{   \cat{C}(K,X\otimes \theta_Y)   } & & \cat{C}(K,Y\otimes X)  \ar{dd}{  \cat{C}(K,Y\otimes \theta_X)    }   \\
	& & \\
	\cat{C}(K,X\otimes Y)         \ar{rr}{\psi_{X,Y} } & & \cat{C}(K,Y\otimes X) \ , 
	\end{tikzcd} 
	\end{equation}
	where $\psi$ is the pivotal structure. By the naturality of $\psi$ this is equivalent to the equality
	\begin{equation}\label{eqnequalityofmaps}
	\begin{tikzcd}
	\cat{C}(K,X\otimes Y) \ar{rrrrrr}{\cat{C}(K,\theta_X\otimes Y)= \cat{C}(K,X\otimes \theta_Y) } & & &&&& \cat{C}(K,X\otimes Y)  
	\end{tikzcd} 
	\end{equation}
	of maps. 
	Under the natural isomorphism $\cat{C}(DX,Y)\cong \cat{C}(K,X\otimes Y)$
	from the Grothendieck-Verdier structure, this is equivalent to the equality
	\begin{equation}\label{eqnequalityofmaps2}
	\begin{tikzcd}
	\cat{C}(DX,Y) \ar{rrrrrr}{\cat{C}(D\theta_X,Y)= \cat{C}(DX,\theta_Y) } & & &&&& \cat{C}(DX,Y)  \ .  
	\end{tikzcd} 
	\end{equation}
	The equivalence of (RT) and $D\theta_X =\theta_{DX}$ for all $X\in \cat{C}$ is thereby reduced to the equivalence
	\begin{align} \cat{C}(D\theta_X,Y)= \cat{C}(DX,\theta_Y) \ \text{for all $X,Y\in\cat{C}$} \quad \Longleftrightarrow \quad D\theta_X =\theta_{DX} \ \text{for all}\ X \in \cat{C} \ , 
	\end{align}
	which we will prove now:
	\begin{enumerate}
		\item[$(\Longrightarrow)$] This follows by setting $Y=DX$ and evaluating the left hand side on $\id_{DX}$.
	\item[$(\Longleftarrow)$]
	For this, we take  any morphism $f:DX\to Y$ and observe
	\begin{align}
	\theta_Y f &= f \theta_{DX} \quad \text{(by naturality of $\theta$)} \\
	&= f D\theta_X \quad \text{(since $D\theta_X =\theta_{DX}$)} \ . 
	\end{align}
	\end{enumerate}
	\end{proof}

\begin{theorem}\label{thmalgrbr}
	The 2-groupoid of cyclic $\framed$-algebras in an arbitrary symmetric monoidal bicategory $\cat{M}$ is equivalent to the 2-groupoid of self-dual balanced braided algebras in $\cat{M}$.
	For $\cat{M}=\Fin$, this 2-groupoid is equivalent to the 2-groupoid of ribbon Grothendieck-Verdier categories in $\Fin$. 
	\end{theorem}

\begin{proof}
	The Theorems~\ref{thmcycrbr} and~\ref{thmcycle2} characterize cyclic $\RBr$-algebras. Using the Comparison~Theorem~\ref{comparisonthm}, we can transfer these results to the cyclic $\framed$-operad which is equivalent to $\RBr$ by Proposition~\ref{Thm: Relation ribbon braids and E2}.
	\end{proof}

\begin{example}\label{exampleGVnonrigid}
In order to discuss a class of	 ribbon  Grothendieck-Verdier categories, let us recall the construction of pointed braided fusion categories from Abelian group cocycles, see \cite{eilenbergmaclane}  and \cite[Section~8.4]{egno}:
For a finite Abelian group $G$, denote by $\FinVect_G$ the category of finite-dimensional $G$-graded vector spaces over the complex numbers.  
For $G$-graded vector spaces $V$ and $W$, one can define a monoidal product $V\otimes W$ by
\begin{align}
(V\otimes W)_g = \bigoplus_{ab=g} V_a\otimes W_b \quad\text{for all}\quad g \in G \ . 
\end{align}
In order to specify the associator, we denote by $\mathbb{C}_g$ the ground field $\mathbb{C}$ seen as $G$-graded vector space supported in degree $g$.
The associator is determined by its values on the simple objects $\mathbb{C}_g$ and given on the simple objects by
\begin{align}
\alpha_{\mathbb{C}_{g_1},\mathbb{C}_{g_2},\mathbb{C}_{g_3}} \colon (\mathbb{C}_{g_1}\otimes \mathbb{C}_{g_2})\otimes \mathbb{C}_{g_3} &\to \mathbb{C}_{g_1}\otimes( \mathbb{C}_{g_2}\otimes \mathbb{C}_{g_3}) \\ (v_1\otimes v_2)\otimes v_3  &\mapsto \lambda(g_1,g_2,g_3) v_1\otimes (v_2\otimes v_3) \ , 
\end{align}
where the numbers $\lambda(g_1,g_2,g_3)\in \mathbb{C}^\times$ form a 3-cocycle $\lambda \in Z^3(G;\mathbb{C}^\times)$. 
In order to construct a braiding for this monoidal product, we need to complete $\lambda$ to an Abelian 3-cocycle $\omega =(\lambda,\tau)\in Z^3_{\operatorname{ab}}(G; \C^\times)$, i.e.\ we additionally need a 2-cochain $\tau$ on $G$ such that
\begin{align}
\lambda(g_2,g_3,g_1)\tau(g_1,g_2g_3) \lambda(g_1,g_2,g_3) &= \tau(g_1,g_3)\lambda(g_2,g_1,g_3)\tau(g_1,g_2) \ \ , \\ 
\lambda(g_3,g_1,g_2)^{-1} \tau(g_1g_2,g_3) \lambda(g_1,g_2,g_3)^{-1} &= \tau(g_1,g_3)\lambda(g_1,g_3,g_2)^{-1}\tau(g_2,g_3)\quad \text{for all}\quad g_1,g_2,g_3 \in G \ \ .
\end{align}
Now a braiding is given by
\begin{align}
c_{ \mathbb{C}_{g_1},\mathbb{C}_{g_2}} : \mathbb{C}_{g_1}\otimes \mathbb{C}_{g_2} &\to \mathbb{C}_{g_2}\otimes \mathbb{C}_{g_1} \\ v_1\otimes v_2  &\mapsto \tau(g_1,g_2) v_2\otimes v_1 
\end{align}
This monoidal category is rigid with the left and right dual $V^*$ of $V\in \FinVect_G$ given by $(V^*)_g =V_{g^{-1}}^*$.
Therefore, finite-dimensional $G$-graded vector spaces with the structure specified above by means of the Abelian 3-cocycle $\omega =(\lambda,\tau)$ give us a  braided fusion category that we denote by ${\FinVect_G}^\omega$. It is pointed in the sense that all simple objects are invertible, and in fact, all pointed braided fusion categories are of this form.

The Abelian 3-cocycle $\omega=(\lambda,\tau)$ can be equivalently described by a quadratic form:
A \emph{quadratic form} on a finite Abelian group $G$ is a map $q:G\to \mathbb{C}^\times$ of sets with $q\left(g^{-1}\right)=q(g)$ for all $g\in G$ such that the symmetric function $b_q:G\times G\to k^\times$ defined by \begin{align} b_q(g,h):=\frac{q(gh)}{q(g)q(h)} \quad \text{for}\quad g,h\in G
\end{align} is a bicharacter in the sense that $b(g_1	g_2,h)=b(g_1,h)b(g_2,h)$ for $g_1,g_2,h\in G$. 
Then by \cite{eilenbergmaclane} the
canonical map
\begin{align}
H_\text{ab}^3(G;\mathbb{C}^\times) \to \text{Quad}(G;\mathbb{C}^\times)\ , \quad (\lambda,\tau) \mapsto (g \mapsto \tau(g,g)) 
\end{align}
is an isomorphism.

Although ${\FinVect_G}^\omega$ is rigid, it can still have Grothendieck-Verdier structures that do not come from rigidity: Since any Grothen\-dieck-Verdier duality has to be an anti-equivalence which maps the simple unit $\mathbb{C}_e$ to the dualizing object $K$, the dualizing object must be simple and hence given by $K=\mathbb{C}_{g_0}$ for some fixed $g_0 \in G$. It is easy to observe that for each such choice, we can find a canonical Grothendieck-Verdier structure with duality functor $D_{g_0}=\mathbb{C}_{g_0}\otimes (-)^*$. Note that $D_{e}=(-)^*$ coincides with the usual (rigid) duality. 

From \cite[Theorem~4.2.2]{Zetsche}, we may now deduce the following statement:
Suppose $g_0 = h_0^{-2}$ for some $h_0 \in G$, 
and denote by $q:G\to\mathbb{C}^\times$ the quadratic form associated to the Abelian cocycle $\omega$ and by $b_q:G\times G\to\mathbb{C}^\times$ the symmetric function corresponding to $q$. We define the group morphism $\eta:G\to\mathbb{C}^\times$ by $\eta (g):=b_q(g,h_0)$ for $g\in G$. 
Then ${\FinVect_G}^\omega$ together with duality $D_{g_0}=D_{h_0^{-2}}$ and balancing
\begin{align}
\theta_{\mathbb{C}_g} : \mathbb{C}_g &\to \mathbb{C}_g \\
v &\mapsto q(g) \eta(g) v=q(g) b_q(g,h_0) v = \frac{q(gh_0)}{q(h_0)} v  
\end{align} 
is a pivotal braided Grothendieck-Verdier category with compatible balancing.
\end{example}

\section{The calculus construction \label{calculusconstruction}}
Consider  a $\Cat$-valued
modular operad $\mathcal{O} : \Graphs \to \Cat$ and a modular $\mathcal{O}$-algebra structure $A:\mathcal{O}\to\End_\kappa^X$
on an object $X$ in a symmetric monoidal bicategory $\cat{M}$ 
equipped with a non-degenerate symmetric pairing $\kappa : X \otimes X\to I$ 
(see Definition~\ref{defmodalgM}).  In this section, we provide 
a general construction of a calculus induced by the 
algebra structure. This calculus will formalize and simplify computations related to the algebra $A$. 
It should be seen
as an auxiliary construction needed for the applications in quantum topology
that we will present in the last section.
In particular, it will allow us to concisely formulate and prove gluing properties for modular algebras.

 As a preparation,
 denote by $\Omega$ the symmetric monoidal category of finite sets and surjections (the monoidal product is disjoint union).
 For $T\in \Graphs$, we denote by $\mathsf{C}(T)$ the set of components of $T$. In other words, $T=\sqcup_{c\in \mathsf{C}(T)} K_c$, where $K_c$ is a corolla. By abuse of notation, we will just write $c$ for the corolla $K_c$.
 A morphism $\Gamma : T \to T'$ induces a surjective map $\mathsf{C}(\Gamma):\mathsf{C}(T)\to\mathsf{C}(T')$. This turns $\mathsf{C}:\Graphs \to \Omega$ into a symmetric monoidal functor. 
 Now we define for $X\in \cat{M}$ and a non-degenerate symmetric pairing $\kappa : X\otimes X\to I$,
 a category $\cat{E}_\kappa^X$ whose objects are pairs $(T,\underline{\varphi}   )$, where $T\in\Graphs$ and 
 $\underline{\varphi}= (\varphi_c)_{c\in \mathsf{C}(T)}$ with
 $\varphi_c \in \cat{M}(I,X^{\otimes \Legs(c)})$.  A morphism 
 $(S, \underline{\varphi}= (\varphi_c)_{c\in \mathsf{C}(S)}   )\to (T, \underline{\psi}=(\psi_d)_{d\in \mathsf{C}(T)}   )$ is a morphism $\Gamma:S\to T$ in $\Graphs$ and for any $d\in \mathsf{C}(T)$ a morphism
 $\bigotimes_{c\in \mathsf{C}(\Gamma)^{-1}(d)}   \varphi_c \to  \psi_d^\Gamma   $
 in $\cat{M}(I, \bigotimes_{c\in \mathsf{C}(\Gamma)^{-1}(d)} X^{\otimes\Legs(c)}  )$,
 where $\psi_d^\Gamma$ is given by
 \begin{align}
 	\psi_d^\Gamma : I \ra{\psi_d} X^{\otimes\Legs(d)} \ra{  \substack{\text{ $\Delta$ applied for each internal edge} \\ \text{ of the restriction  $\Gamma_d :\sqcup_{c\in  \mathsf{C}(\Gamma)^{-1}(d)  }  c \to d$ of $\Gamma$ to $d$ } }  }      \bigotimes_{c\in \mathsf{C}(\Gamma)^{-1}(d)} X^{\otimes\Legs(c)} \ . 
 	\end{align}
 The category $\cat{E}_\kappa^X$ comes with a projection $\cat{E}_\kappa^X\to\Graphs$. Moreover, disjoint union and juxtaposition of families of morphisms provide a symmetric monoidal structure on $\cat{E}_\kappa^X$ such that $\cat{E}_\kappa^X\to\Graphs$ becomes a symmetric monoidal functor.

In the next step,
 let us recall a familiar construction:\label{pagegrothendieck}
For a functor $F:\cat{C} \to \Cat$, we denote by $\int F$ its \emph{Grothendieck construction}, see e.g.\
\cite[Section~I.5]{maclanemoerdijk}.
By definition this is the category whose objects are pairs $(c,x)$, where $c\in \cat{C}$ and $x\in F(c)$. A morphism $(c,x)\to(c',x')$ is a pair $(f,\alpha)$, where $f:c\to c'$ is a morphism in $\cat{C}$ and $\alpha : F(f)x\to x'$ is a morphism in $F(c')$. 
The Grothendieck construction comes with a projection functor $\int F \to \cat{C}$.

The Grothendieck construction 
applied to $\mathcal{O}:\Graphs \to \Cat$ and 
$\End_\kappa^X :\Graphs\to\Cat$
gives us a functor $\int \mathcal{O} \longrightarrow \Graphs$, and we may define the category $\mathcal{O} \star_\kappa X$ as the pullback 
\begin{equation}
\begin{tikzcd}
\mathcal{O} \star_\kappa X \ar[r] \ar[d] & \int \mathcal{O} \ar[d] \\ 
\int \cat{E}_\kappa^X \ar[r] & \Graphs
\end{tikzcd}
\end{equation}  
in $\Cat$ (it is also a homotopy pullback because $\int \mathcal{O}\to \Graphs$ is a Grothendieck fibration).
The category $\mathcal{O} \star_\kappa X$ comes with a projection $\pi_\mathcal{O}^X : \mathcal{O} \star_\kappa X\to \Graphs$ (by composition of either pair of composable arrows in the above square) and is  naturally a symmetric monoidal category such that $\pi_\mathcal{O}^X$ is a symmetric monoidal functor.

By virtue of the monoidal product in the symmetric monoidal bicategory $\cat{M}$,
the category $\cat{M}(I,I)$ is symmetric monoidal (the monoidal product can also be described through the composition as an Eckmann-Hilton argument shows; this will then also establish the symmetry). The symmetric monoidal structure on 
$\cat{M}(I,I)$ can be used to define a symmetric monoidal functor $\underline{\cat{M}(I,I)}\colon \Omega \to \Cat$ 
from the category of finite sets and surjections to $\Cat$
sending a finite set $S$ to $\cat{M}(I,I)^{\times S}$ and a surjective map $f\colon S 
\longrightarrow S'$ to the functor
$\cat{M}(I,I)^{\times S} \to \cat{M}(I,I)^{\times S'}$ whose component for $s'\in S$ is
\begin{align}
	\cat{M}(I,I)^{\times S}=\prod_{s'\in S'}  \cat{M}(I,I)^{\times f^{-1}(s')} \ra{\text{projection}}
	\cat{M}(I,I)^{\times f^{-1}(s')} \ra{\otimes} \cat{M}(I,I)
\end{align}

\begin{lemma}\label{lemmacalc10}
	Let $\cat{O}$ be a $\Cat$-valued modular operad and $A: \cat{O}\to\End_\kappa^X$ a modular algebra on an object $X$ in a symmetric monoidal bicategory $\cat{M}$ with non-degenerate symmetric pairing $\kappa$. 
	Then $A$ induces a lax (in the sense that the coherence morphisms are not necessarily isomorphisms) symmetric monoidal transformation between symmetric monoidal functors between symmetric monoidal bicategories (where the involved 1-categories are seen as bicategories without non-trivial 2-morphisms):
	\begin{equation}\label{calculuseqn0}
		\begin{tikzcd}
			& &  \  \ar[Rightarrow,shorten <= 0.01cm, shorten >= 0.01cm]{dd}{\gamma^A} & & \\  \    
			\mathcal{O} \star_\kappa X  \ar[bend left=40]{rrrr}{\star} \ar[bend left=-40,swap]{rrrr}{\underline{\cat{M}}(I,I) \circ \mathsf{C}} & &          & &  \Cat \ . 
			\\ 
			& & \ & &
		\end{tikzcd} 
	\end{equation}
	Here $\star : \mathcal{O} \star_\kappa X\to\Cat$ is the constant functor whose single value is the one-point category.
\end{lemma}

\begin{proof}
	The component of $\gamma^A$ at $(T,o,\underline{\varphi})$ with $\underline{\varphi} =(\varphi_c)_{c\in \mathsf{C}(T)}$ is given at follows:
	We have \begin{align} A_o\in\prod_{c\in\mathsf{C}(T)} \cat{M}(X^{\otimes\Legs(c)},I)\end{align} (this uses the $\cat{O}$-algebra structure on $X$)
	and denote the $c$-component by $A_o^c\in \cat{M}(X^{\otimes\Legs(c)},I)$.
	With this notation, we define
	\begin{align}
		\gamma^A_{(T,o,\underline{\varphi})} :=    \left(   I\ra{   \varphi_c  } X^{\otimes\Legs(c)} \ra{ A_o^c } I         \right)_{c\in\mathsf{C}(T)} \ . 
		\end{align}
The value at a morphism given by the data $\Gamma \colon S \to T$, $ \omega \colon \mathcal{O}(\Gamma)(o) \to o'$ and $ \theta_d \colon \bigotimes_{c\in \mathsf{C}(\Gamma)^{-1}(d)}   \varphi_c \to  \psi_d^\Gamma $ is the natural transformation 
\begin{equation}
\begin{tikzcd}
* \ar[rr] \ar[dd, "	\gamma^A_{(S,o,\underline{\varphi})}", swap] & & *\ar[dd, "	\gamma^A_{(T,o',\underline{\psi})}"] \\ 
 & & \\ 
\mathcal{M}(I,I)^{|\mathsf{C}(S)|} \ar[rr, swap, "{\underline{\mathcal{M}}(I,I)\circ \mathsf{C} (\Gamma)}" ] \ar[rruu, Rightarrow,  "{\gamma_{(\Gamma, \omega, \theta)}}", shorten <= 10,shorten >= 10  ] & &
\mathcal{M}(I,I)^{|\mathsf{C}(T)|}
\end{tikzcd}
\end{equation} 
given by the collection of morphisms $\gamma_{(\Gamma, \omega, \theta)	}^d$ for $d\in \mathsf{C}(T)$
\begin{align}
\gamma_{(\Gamma, \omega, \theta)}^d \colon \left( \bigotimes_{c\in \mathsf{C}(\Gamma)^{-1}(d)}  A^c_o \right)\circ  \varphi_c 
\ra{\theta_d}  \left(\bigotimes_{c\in \mathsf{C}(\Gamma)^{-1}(d)} A^c_o \right) \circ \psi_d^\Gamma \cong A_{\mathcal{O}(\Gamma_d)(\times_{c \in C(\Gamma)^{-1}(d)} o_c)} \circ \psi_d 
 \ra{A_\omega  } A_{o'}\circ \psi_d \ \ .
\end{align}
	A direct computation shows that this is a symmetric monoidal transformation.
	\end{proof}

\begin{proposition}[Calculus functor]
\label{propcalculusfunctor}
Let $\cat{M}$ be a symmetric monoidal bicategory.
		For an $\cat{M}$-valued modular algebra $A$ on $(X,\kappa)$ over a $\Cat$-valued modular operad $\mathcal{O}$, 
		\begin{align}
		\Calc_A : \mathcal{O} \star_\kappa X\to\cat{M}(I,I) \ , \quad
		(T,o,\underline{\varphi}) \mapsto \left(   \gamma^A_{(T,o,\underline{\varphi})}  \right)^\otimes \ . 
		\end{align}
		is a symmetric monoidal functor that we refer to as the
		 calculus (functor) for $A$. 
	\end{proposition}

\begin{remark}
	The calculus construction cannot only be performed for modular algebras, but --- by replacing $\Graphs$ with $\RForests$ or $\Forests$ --- also for algebras over ordinary operads or cyclic operads.
	\end{remark}

\begin{proof}[\slshape Proof of Proposition~\ref{propcalculusfunctor}]
	A morphism
	$f:(T,o,\underline{\varphi})\to (T',o',\underline{\varphi'})$
	in $\mathcal{O} \star_\kappa X$
	gives rise to a natural transformation in the left square of the following diagram:
	\begin{equation}
	\begin{tikzcd}
	\star  \ar[rr,"\gamma_{(T,o,\underline{\varphi})}"] \ar[dd,"=",swap] && \cat{M}(I,I)^{\times \pi_0(T)} \ar[rrrrd,"\otimes"] \ar[dd,"\underline{\cat{M}(I,I)} (  \mathsf{C}(f) )"] \ar[ddll,Rightarrow]\\ &&&&&& \cat{M}(I,I)\ . \\
	\star  \ar[rr,"\gamma_{(T',o',\underline{\varphi'})}",swap] && \cat{M}(I,I)^{\times \pi_0(T')}\ar[rrrru,"\otimes",swap]
	\end{tikzcd}
	\end{equation} 
The square commutes up to a canonical natural transformation.
	The triangle on the right commutes up to a canonical natural isomorphism.
	As a consequence, we obtain a morphism
	$\Calc_A(T,o,\underline{\varphi}) = \left(   \gamma^A_{(T,o,\underline{\varphi})}  \right)^\otimes 
	\to \Calc_A(T',o',\underline{\varphi'}) = \left(   \gamma^A_{(T',o',\underline{\varphi'})}  \right)^\otimes$ in $\cat{M}(I,I)$ which we define to be $\Calc_A(f)$. This way, $\Calc_A$ extends to a functor 
	$\Calc_A : \mathcal{O} \star_\kappa X\to\cat{M}(I,I)$. 
	
Since $\gamma^A$ from \eqref{lemmacalc10}
	is a symmetric monoidal transformation, one concludes that $\Calc_A$ is a symmetric monoidal functor:
	For a disjoint union $(T,o,\underline{\varphi})=(T^{(0)},o_0,\underline{\varphi_0})\sqcup (T^{(1)},o_1,\underline{\varphi_1})$, where without loss of generality $T^{(0)}$ and $T^{(1)}$ are connected,
	we find $\gamma_{(T,o,\underline{\varphi})}^A \cong \gamma_{(T^{(0)},o_0,\underline{\varphi_0})}^A \times \gamma_{(T^{(0)},o_0,\underline{\varphi_0})}^A$ in $\cat{M}(I,I)^{\times 2}$ by a canonical isomorphism because $\gamma^A$ is symmetric monoidal. 
	As a consequence,
	\begin{align}
			\Calc_A (T,o,\underline{\varphi})\cong\gamma_{(T^{(0)},o_0,\underline{\varphi_0})}^A \otimes \gamma_{(T^{(1)},o_1,\underline{\varphi_1})}^A = \Calc_A (T^{(0)},o_0,\underline{\varphi_0})\otimes \Calc_A (T^{(1)},o_1,\underline{\varphi_1})
		\end{align}
	by a canonical isomorphism.
	\end{proof}

 Given an $\cat{M}$-valued modular $\mathcal{O}$-algebra $A$,
 the calculus construction assigns to 
an operation $o\in \mathcal{O}(T_n)$ and 
$\varphi \in \cat{M}(I,X^{\otimes (n+1)})$ the object in $\cat{M}(I,I)$ obtained by evaluation of $A_o$ on $\varphi$.

In the case $\cat{M}=\Fin$, we will use the following conventions:
\begin{itemize}
	\item The category $\cat{M}(I,I)=\Fin(\Vect,\Vect)$ is canonically equivalent to $\Vect$ and we will therefore identify $\cat{M}(I,I) $ in this case with $\Vect$. Therefore, the calculus functor will be seen as a $\Vect$-valued functor.
	
	\item Let $\cat{C}\in\Fin$ be the underlying object for our algebra.
	For $o\in \cat{O}(T_n)$, the evaluation  $\Calc_A(T_n,o,-)$ of the calculus on $(T_n,o,-)$ is a functor $\Fin(\Vect,\cat{C}^{\boxtimes (n+1)})\to \Vect$, but through the identification $\Fin(\Vect,\cat{C}^{\boxtimes (n+1)})\simeq  \cat{C}^{\boxtimes (n+1)}$, we agree to see it as functor
	\begin{align}
	\Calc_A(T_n,o,-):\cat{C}^{\boxtimes (n+1)} \to\Vect
	\end{align}
	which after these identifications is just $A_o$.
	In particular, $\Calc_A(T_n,o,-)$
	can be naturally seen as a left exact functor.
	
	\end{itemize}

One of the key properties of the calculus construction that we will need later is its `locality' that we formulate in the following excision result. It crucially relies on the relation between the composition in the endomorphism operad and Lyubashenko's left exact coend that we established earlier in Proposition~\ref{propointcoend}.

\begin{theorem}[Excision]\label{thmexcision}
	Let $A$ be a $\Fin$-valued modular algebra on $(\cat{C},\kappa)$ over a $\Cat$-valued modular operad $\mathcal{O}$, moreover $\Gamma : T \to T'$  a morphism in $\Graphs$, where $T'$ is a connected.
	Then for $o\in \mathcal{O}(T)$ and $o':=\mathcal{O}(\Gamma)o$,
	 we have a canonical natural isomorphism
	\begin{align}
	\Calc_A(T',o',-)\cong\oint^{X_1,\dots,X_r\in\cat{C}} \Calc_A(T,o,\dots,X_j^\kappa,\dots,X_j,\dots)
	\end{align} 
	of functors $\cat{C}^{\boxtimes \Legs(T')}\to\FinVect$, where 
	 $\oint^{X_1,\dots,X_r\in\cat{C}}$ is the left exact coend running over $r$ variables, each one corresponding to an internal edges of $\Gamma$.
	\end{theorem}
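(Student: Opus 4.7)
The plan is to reduce the statement to a direct application of Proposition~\ref{propointcoend}, which already describes the composition operation $\End_\kappa^\cat{C}(\Gamma)$ as a left exact coend with one dummy variable per internal edge of $\Gamma$. The extra input we need is that the modular algebra structure $A$ intertwines the $\mathcal{O}$-composition with the endomorphism-operadic composition.

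First I would unfold the left-hand side: by the construction of the calculus in Definition~\ref{defcalculusfunctor}, the functor $\Calc_A(T',o',-) : \cat{C}^{\boxtimes \Legs(T')} \to \FinVect$ is nothing other than the left exact functor $A(o') \in \Endbf_\kappa^\cat{C}(T') = \Fin[\cat{C}^{\boxtimes \Legs(T')},\FinVect]$, evaluated as a functor on its argument. Next I would invoke the fact that $\widetilde{A} : \mathcal{O}\otimes\FinVect \to \Endbf_\kappa^\cat{C}$ is a symmetric monoidal natural transformation of functors $\Graphs \to \Fin$. The naturality square for the morphism $\Gamma : T \to T'$ in $\Graphs$ therefore supplies a canonical isomorphism
\begin{equation}
A(o') = A(\mathcal{O}(\Gamma)o) \; \cong \; \Endbf_\kappa^\cat{C}(\Gamma)\bigl(A(o)\bigr)
\end{equation}
in the finite category $\Fin[\cat{C}^{\boxtimes\Legs(T')},\FinVect]$. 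Since $A(o)$ is by construction a left exact functor $\cat{C}^{\boxtimes\Legs(T)}\to\FinVect$, and since the composition operation in $\Endbf_\kappa^\cat{C}$ is described by the same formula (via coevaluations of $\kappa$) as the one used to define $\End_\kappa^\cat{C}$ in Section~\ref{seccyclicmodalg}, Proposition~\ref{propointcoend} applies verbatim and yields
\begin{equation}
\Endbf_\kappa^\cat{C}(\Gamma)\bigl(A(o)\bigr) \; \cong \; \oint^{X_1,\dots,X_r\in\cat{C}} A(o)(\dots,X_j^\kappa,\dots,X_j,\dots) ,
\end{equation}
where the coend runs over the $r$ internal edges of $\Gamma$. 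Substituting $A(o)(-) = \Calc_A(T,o,-)$ on the right yields the claimed isomorphism.

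The main obstacle, such as it is, is bookkeeping rather than mathematics: one must check that the isomorphism so produced is genuinely natural in the external legs of $T'$ and independent of auxiliary choices (for instance a chosen factorization of $\Gamma$ into elementary contractions of internal edges). The first follows from the naturality of $\widetilde{A}$ and the fact that the coevaluations $\Delta$ have been chosen coherently (Remark~\ref{coevaluationrem}); the second follows from the Fubini property of the left exact coend as used implicitly in the proof of Proposition~\ref{propointcoend}, together with the associativity (up to coherent isomorphism) of composition in $\Endbf_\kappa^\cat{C}$ already encoded by its symmetric monoidality as a functor on $\Graphs$. No further ingredient is needed beyond Proposition~\ref{propointcoend} and the symmetric monoidality of the algebra structure map $\widetilde{A}$.
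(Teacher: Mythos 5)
Your proposal is correct and follows essentially the same route as the paper's own proof: unfold $\Calc_A(T',o',-)$ as $A_{\mathcal{O}(\Gamma)o}$, use the (symmetric monoidal) naturality of the algebra transformation to identify it with $\End_\kappa^\cat{C}(\Gamma)A_o$, and then apply Proposition~\ref{propointcoend}. The extra coherence remarks you add at the end are harmless bookkeeping that the paper subsumes under the word ``canonical''.
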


The notation on the right hand side was explained in Proposition~\ref{propointcoend}.

\begin{proof}
	By definition and the conventions above we have $\Calc_A(T',o',-)=A_{\mathcal{O}(\Gamma)o}$ as functors $\cat{C}^{\boxtimes \Legs(T')} \to \FinVect$. Thanks to naturality of $A$ up to coherent isomorphism, we find \begin{align} A_{\mathcal{O}(\Gamma)o}\cong \End_\kappa^\cat{C}(\Gamma)A_o \ , \end{align} where $A_o$ is the evaluation of $A$ on $o$.
	By
	Proposition~\ref{propointcoend}
	we have a canonical isomorphism $\End_\kappa^\cat{C}(\Gamma)A_o\cong \oint^{X_1,\dots,X_r\in\cat{C}} A_o^{\boxtimes}(\dots,X_j^\kappa,\dots,X_j,\dots)$.
By combining these facts we obtain the assertion.
	\end{proof}

\section{Applications to quantum topology\label{secapp}}
In this section, we present two applications of our characterization of cyclic algebras.
Besides the calculus construction  from Section~\ref{calculusconstruction}, a key ingredient will be the modular envelope of a cyclic operad \cite{costello}, a concept recalled in Section~\ref{secmodularenvelope}.

	\subsection{A reminder on the modular envelope\label{secmodularenvelope}}
	Any modular operad may be seen as a cyclic operad, i.e.\
	there is a 
 forgetful functor 
	 from modular operads to 
	 cyclic operads.
	 Conversely, one can assign to any cyclic operad the `smallest' modular operad containing it, the 
	 the so-called \emph{modular envelope} \cite{costello}.
	 Formally, the modular envelope of a cyclic operad $\mathcal{O}$ 
	 is obtained via left Kan extension along the inclusion $\ell : \Forests \to \Graphs$.
	This construction can be performed for arbitrary cocomplete target categories.
	Depending on the target category and the extent to which the axioms of a cyclic and modular operad are relaxed, one needs to work with the `homotopically correct' version of the modular envelope.
In our framework of $\Cat$-valued cyclic operads,
the modular envelope of a cyclic operad $\cat{O}$
 evaluated at $T\in \Graphs$ will be given by a relaxed version of a colimit in categories (more precisely, an \emph{oplax colimit} in the most common terminology)
for the functor $\ell/T\to \Forests \ra{\mathcal{O}}\Cat$.
This type of colimit may be modeled by the Grothendieck construction recalled on page~\pageref{pagegrothendieck}, see \cite{hgn} and \cite[Theorem~10.2.3]{johnsonyau}.
Hence,
\begin{align}
(\Envint \mathcal{O})(T) = \int \left(    \ell/T\to \Forests \ra{\mathcal{O}}\Cat   \right) .  
\end{align}	
The subscript `$\int$' is supposed to remind us that, for $\Cat$-valued cyclic operads, we consider a version of the modular envelope defined via the Grothendieck construction.
With this definition,
it follows from Thomason's Theorem \cite[Theorem~1.2]{thomason}
that the topological modular operad 
$|B \Envint \mathcal{O}|$ obtained by applying arity-wise the nerve and the geometric realization of $\Envint\mathcal{O}$ is characterized by the homotopy equivalence
\begin{align}
\hocolimsub{(T_0,\Gamma) \in \ell /T}   |B  \mathcal{O}(T_0)| \ra{\simeq}    |B \Envint \mathcal{O}| (T) = |B \Envint \mathcal{O}(T)| \ ,    \label{eqnrelenv}
\end{align}
i.e.\
it is given by the appropriately derived version of the modular envelope of the 
topological cyclic operad $|B\mathcal{O}|$, as considered in \cite{giansiracusa}.
If we denote the derived modular envelope of topological cyclic operads via $\mathbb{L}\Env$, \eqref{eqnrelenv} gives us a homotopy equivalence
\begin{align}
	\mathbb{L} \Env |B\cat{O}| \ra{\simeq} |B\Envint \cat{O}| \label{equivenvelope}
	\end{align}
of topological modular operads.

\begin{proposition}[Modular extension]\label{propmodext}
	Let $\mathcal{O}$ be a cyclic operad in $\Cat$ and $A: \mathcal{O}\to\End_\kappa^X$ a cyclic $\mathcal{O}$-algebra on an object $X$ in a symmetric monoidal bicategory $\cat{M}$ with non-degenerate symmetric pairing $\kappa : X \otimes X\to I$.
	Then $A$ naturally gives rise to a modular algebra $\Envint \mathcal{O} \to \End_\kappa^X$ over the modular envelope $\Envint \mathcal{O}$ which we denote by $\widehat{A}$ and refer to as the modular extension of the cyclic $\mathcal{O}$-algebra  to a modular $\Envint \mathcal{O}$-algebra.
	\end{proposition}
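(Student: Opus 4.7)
The plan is to observe that the statement is a direct consequence of the adjunction $\mathbb{L}U \dashv V$ recalled in Section~\ref{secmodularenvelope}, together with the fact, recorded in Proposition~\ref{propendoperad}, that the endomorphism operad $\End_\kappa^X : \Graphs \to \Cat$ associated to a non-degenerate symmetric pairing is a fully fledged \emph{modular} operad, not merely a cyclic one. The cyclic algebra $A : \mathcal{O} \to \End_\kappa^X$ by definition takes values in the cyclic endomorphism operad, which is nothing but the restriction of the modular endomorphism operad along the inclusion $\ell : \Forests \to \Graphs$; in other words, $A$ is equivalently a morphism $A : \mathcal{O} \to V\End_\kappa^X$ of $\Cat$-valued cyclic operads.

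With this reinterpretation in place, the unit of the adjunction $\mathbb{L}U \dashv V$ produces from $A$ a morphism $\widehat{A} : \mathbb{L}U\mathcal{O} \to \End_\kappa^X$ of $\Cat$-valued modular operads, characterized (up to coherent equivalence) by the property that the composition
\begin{equation}
\mathcal{O} \longrightarrow V\mathbb{L}U\mathcal{O} \xrightarrow{\ V\widehat{A}\ } V\End_\kappa^X
\end{equation}
recovers $A$. By Definition~\ref{defmodalgM}, a morphism of modular operads from some $\mathcal{P}$ into the modular endomorphism operad of $(X,\kappa)$ is \emph{precisely} a modular $\mathcal{P}$-algebra structure on $(X,\kappa)$; applying this to $\mathcal{P} = \mathbb{L}U\mathcal{O}$ yields the asserted modular algebra $\widehat{A}$. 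Naturality of $\widehat{A}$ in $A$ is automatic from naturality of the unit.

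Since the author explicitly advertises the statement as \emph{purely formal}, no genuine obstacle is expected. The only point requiring minor care is that the adjunction $\mathbb{L}U \dashv V$ has to be understood in the appropriate homotopical sense (hence the \emph{derived} modular envelope $\mathbb{L}U$ computed via homotopy left Kan extension along $\ell$), and that everything lives in the bicategorical framework of Section~\ref{seccycmodalg0}, so symmetric monoidal transformations and their modifications should be treated in the weak sense throughout. This is consistent with the conventions used elsewhere in the paper and does not require any additional argument.
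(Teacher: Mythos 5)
Your proposal is correct and is in substance the same argument as the paper's: the paper simply unwinds the universal property you invoke, observing that $A$ together with the modular structure maps $\End_\kappa^X(\Gamma)$ gives, for each $T\in\Graphs$, a homotopy coherent cocone on the $\ell/T$-shaped diagram $(T_0,\Gamma)\mapsto\mathcal{O}(T_0)$, which then descends to the homotopy colimit $(\mathbb{L}U\mathcal{O})(T)$ defining the derived modular envelope. The only point to be aware of is that the paper never formally sets up a derived adjunction $\mathbb{L}U\dashv V$ in its bicategorical framework, so the explicit cocone construction is what actually carries the proof; your appeal to the unit of that adjunction is a legitimate but black-boxed packaging of exactly this universal property of the homotopy left Kan extension.
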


\begin{proof}
	For $T\in \Graphs$ and $(T^{(0)},\Gamma) \in \ell /T $, the cyclic $\mathcal{O}$-algebra $A$ provides us with maps
	\begin{align}\label{eqnpromaphocolim}	\mathcal{O}(T^{(0)}) \xrightarrow{\ A_{T^{(0)}} \ } (\End_\kappa^X)(T^{(0)})    \xrightarrow{\   (\End_\kappa^ X)(\Gamma)  \     }       (\End_\kappa^X)(T) 
	\end{align} that, by virtue of $A$ being a symmetric monoidal transformation $\mathcal{O}\to\End_\kappa^X$,
	form a  co-cone up to natural transformation
	to the $\ell/T$-shaped diagram sending $(T^{(0)},\Gamma)$ to $\mathcal{O}(T^{(0)})$.
	In more detail, for any morphism $\Omega:(T^{(0)},\Gamma_0)\to (T^{(1)},\Gamma_1)$ in $\ell/T$,
	the diagram
		\begin{equation}\label{diagramtofilleqn}
	\begin{tikzcd}
	\mathcal{O}(T^{(0)}) \ar[dd,"\mathcal{O}(\Omega)", swap] \ar[rr, "A_{T^{(0)}}"] && (\End_\kappa^X)(T^{(0)}) \ar[dd,"(\End_\kappa^X)(\Omega)"]  \ar[drr,"(\End_\kappa^X)(\Gamma_0)"] \\ && &&     (\End_\kappa^X)(T)    \\
\mathcal{O}(T^{(1)})  \ar[rr, "A_{T^{(1)}}",swap] & &  (\End_\kappa^X)(T^{(1)}) \ar[rru,swap,"(\End_\kappa^X)(\Gamma_1)"]
	\end{tikzcd}
	\end{equation}	
	commutes up to canonical natural isomorphism: The natural isomorphism for the square is part of data of $A$, the natural isomorphism for the triangle comes from the functoriality of the endomorphism operad.
	
	As a consequence, the maps \eqref{eqnpromaphocolim} induce a map
	\begin{align}
	(\Envint \mathcal{O})(T) = \int \left(    \ell/T\to \Forests \ra{\mathcal{O}}\Cat   \right)\to \End_\kappa^X(T) 
	\end{align} 
	providing us with a modular $\mathcal{O}$-algebra structure on $(X,\kappa)$.  
	\end{proof}

\begin{remark}\label{remmodext}
	For the maps~\eqref{eqnpromaphocolim} to descend to the Grothendieck construction, it is not needed that the transformations exhibited in
	\eqref{diagramtofilleqn} are actually isomorphisms (a transformation running from top to bottom would have sufficed). Nonetheless, the fact that they actually are isomorphisms will become relevant in the following situation:
	Suppose that in Proposition~\ref{propmodext} the operad $\cat{O}$ is actually groupoid-valued.
	Then the proof above tells us that the modular extension $\widehat{A}$ of any cyclic $\cat{O}$-algebra $A$
	will consist of functors $\Envint \cat{O}(T)\to\End_\kappa^X(T)$ for all $T\in \Graphs$ which send \emph{all} morphisms to isomorphisms. In other words, $\widehat{A}$ descends to a modular $\Pi |B\Envint \cat{O}|$-algebra that by abuse of notation we also denote by $\widehat{A}$.
	Any $\Pi |B\Envint \cat{O}|$-algebra $B$ can be restricted to a cyclic $\cat{O}$-algebra (this would not work for an arbitrary $\Envint \cat{O}$-algebra) that we denote by $B_0$. One can then confirm that by construction we find $\widehat{B_0}\simeq B$. 
	\end{remark}

	\begin{example}
		\label{exenvelopeas}
		The modular envelope of the cyclic associative operad has a description in terms of ribbon graphs as noted by Costello \cite{costello}: 
		For a corolla $T\in \Graphs$, we define the category $\RGraphs(T)$ as the category of connected ribbon graphs whose legs are identified with those of $T$. More precisely, the objects of  $\RGraphs(T)$ are connected graphs $\Gamma$ with an identification $\pi_0(\Gamma)\cong T$ and a cyclic order of the  half edges incident to each vertex. Note that we may see $\Gamma$ as a morphism $\Gamma: \nu(\Gamma) \to T$ in $\Graphs$. Then the cyclic order for $\Gamma$ amounts to a cyclic order of the legs of the disjoint union $\nu(\Gamma)$ of corollas, i.e.\ to an element in the discrete category $\As(\nu(\Gamma))$.
			Morphisms in $\RGraphs(T)$ are contractions of disjoint unions of trees.
		If we denote the category of connected graphs $\Gamma$ with identification $\pi_0(\Gamma)\cong T$ by $\catf{Gr}_{\operatorname{conn}}(T)$, then we can describe
		$\RGraphs(T)$
		formally as the Grothendieck construction
		\begin{align}
		\label{eqngcrgraphs}	\RGraphs(T) = \int \left( \catf{Gr}_{\operatorname{conn}}(T) \ra{\Gamma \mapsto \As(\nu(\Gamma))} \Cat  \right) \ . 
			\end{align}
	 It is straightforward to see that the assignment $T\mapsto \RGraphs(T)$ extends to a modular operad $\RGraphs:\Graphs\to\Cat$, the \emph{modular operad of ribbon graphs}.
	 There is a canonical functor
	 $  \ell / T \to \catf{Gr}_{\operatorname{conn}}(T)$ from 
	 the slice of the inclusion $\ell:\Forests\to \Graphs$ over $T$
	 to
	 $\catf{Gr}_{\operatorname{conn}}(T)$ to  sending $\Gamma : T' \to T$ in $\ell / T$
	  to $\Gamma$.
	  This gives us actually an equivalence
	  \begin{align}\label{eqneequivell} \ell / T \ra{\simeq} \catf{Gr}_{\operatorname{conn}}(T) \end{align}
	   of categories.
	   This follows in a straightforward way from the definition of $\catf{Gr}_{\operatorname{conn}}(T)$ and is also noted in \cite[Section~2.1]{giansiracusa}.
	  If we restrict the Grothendieck construction~\eqref{eqngcrgraphs}
	   along the equivalence~\eqref{eqneequivell},
	   we obtain by definition
	   $\Envint\As(T)$.
	   This is natural in $T$, and hence the equivalences~\eqref{eqneequivell} induce an equivalence \begin{align} \Envint\As \ra{\simeq} \RGraphs \label{eqnmaphandlebody}\end{align}
			from the modular envelope of the associative operad $\As$ to the modular operad of ribbon graphs $\RGraphs$.
		In combination with the calculus construction (Section~\ref{calculusconstruction}), the modular extension (Proposition~\ref{propmodext}) and Theorem~\ref{thmcyclas},
		this yields for any  pivotal Grothendieck-Verdier category $\cat{C}$ in $\Fin$ with associated non-degenerate symmetric pairing $\kappa:\cat{C}\boxtimes\cat{C}\to\FinVect$  a symmetric monoidal functor
			\begin{align}
			\Calc_{\widehat{\cat{C}}} :
			\RGraphs \star_\kappa \widehat{\cat{C}} \to \Vect \ , 
			\end{align}
			i.e.\
			$\cat{C}$ gives rise to a symmetric monoidal functor from the category of ribbon graphs with $\cat{C}$-labeled legs to vector spaces. 	
		To any object $(T,\Gamma,X) \in \RGraphs \star_\kappa \widehat{\cat{C}}$, i.e.\
		\begin{itemize}\item
		an object $T \in \Graphs$ (without loss of generality, we assume that $T$ is connected, i.e.\ a corolla), \item a ribbon graph $\Gamma \in \RGraphs(T)$, 
		which we can see as a morphism $\Gamma : T'\to T$ in $\Graphs$, \item 
		and an object $X\in\cat{C}^{\boxtimes \Legs(T)}$ (to be thought of as a $\cat{C}$-label for each of the legs of $\pi_0(\Gamma)\cong T$),
		\end{itemize}
	the calculus construction assigns a vector space $\Calc_{\widehat{\cat{C}}}(T,\Gamma,X)$. 
	In order to discuss an example, let us assume that $\Gamma$ is a bouquet of $r$ circles 
	with $n+1$ legs attached to the base vertex.
		For any order of $\Legs(T)$, i.e.\ an identification of $T$ with $T_n$, and the associated order 
		$X_0\boxtimes\dots\boxtimes X_n \in \cat{C}^{\boxtimes (n+1)}$
		of $X$ (which without loss of generality is treated as a pure tensor here), there is a canonical isomorphism
		\begin{align}  \Calc_{\widehat{\cat{C}}}(T,\Gamma,X)\cong
		\cat{C}\left(K,X_0 \otimes\dots\otimes X_n\otimes \mathbb{F}^{\otimes      r } \right)
	 \quad \text{with the coend}\quad \mathbb{F}=\int^{X\in\cat{C}} X\otimes X^\kappa \ . \label{eqnorderiso} \end{align}
	When it comes to how exactly the  coend $\mathbb{F}=\int^{X\in\cat{C}} X\otimes X^\kappa$
	has to be understood, a little care is required: It is defined as the image of 
	the coend $\Delta=\int^{X \in \cat{C}} X \boxtimes X^\kappa$ under
	the monoidal product.
		The isomorphism~\eqref{eqnorderiso} follows from the definition of the modular extension in Proposition~\ref{propmodext} (in particular \eqref{eqnpromaphocolim}) and
		excision (Theorem~\ref{thmexcision}).
		Note that this isomorphism is canonical only after the choice of the order.
		The left hand side $\Calc_{\widehat{\cat{C}}}(T,\Gamma,X)$ is defined independently of this choice (this is a strength of the calculus construction). 
		\end{example}

	\subsection{Application I: Handlebody group representation from  ribbon Groth\-en\-dieck-Verdier structures\label{app-hbdy}}
	As a first application, we will prove that a ribbon Grothendieck-Verdier structure gives rise to explicitly computable handlebody group representations. 
	The idea is to combine our characterization of cyclic framed little disks algebras in Theorem~\ref{thmcycle2}
	and Theorem~\ref{thmalgrbr} with the relation between the derived modular envelope of $\framed$ and the modular operad of handlebodies found by Giansiracusa \cite{giansiracusa} (this statement refers to topological operads).
	 
The \emph{(groupoid-valued) modular handlebody operad} is the symmetric monoidal functor $\hbdy : \Graphs \to \Grpd$ which assigns to a corolla $T$ the groupoid $\hbdy(T)$ defined as follows: Objects are compact connected oriented handlebodies $H$ (hereafter just referred to as handlebodies for brevity) with $|\Legs(T)|$ many disks embedded in $\partial H$ with a parametrization of the  disks, i.e.\ an orientation-preserving embedding $\psi : \sqcup_{\Legs(T)} \mathbb{D}^2 \to H$ which is an orientation-preserving diffeomorphism to the disks embedded in $\partial H$. 
Morphisms in $\hbdy(T)$ are isotopy classes of orientation-preserving diffeomorphisms that respect the disk parametrizations (hence, the automorphism groups are precisely the handlebody groups). 
Operadic composition is by gluing of handlebodies along their boundaries, see \cite[Section~4.3]{giansiracusa} for the details.
Note however that our handlebody operad allows all handlebodies while Giansiracusa only considers those handlebodies $H_{g,n}$ with genus $g$ and $n$ embedded disks for which $(g,n)\neq (0,0),(0,1)$. 
In order to distinguish both versions, we denote Giansiracusa's (groupoid-valued) handlebody operad by $\hbdyg$, where the `a' indicates that we restrict to the handlebodies allowed in \cite{giansiracusa}. It comes with an inclusion $\hbdyg\subset \hbdy$ of modular operads. 
By taking entry-wise the classifying spaces, we may see $\hbdy$ and $\hbdyg$ as topological operads that we denote by $\Hbdy$ and $\Hbdyg$. We may identify $\Pi \Hbdy$ and $\Pi \Hbdyg$ with $\hbdy$ and $\hbdyg$, respectively; in formulae
\begin{align}
\Pi \Hbdy \simeq \hbdy \ , \quad \Pi \Hbdyg \simeq \hbdyg \ . 
\end{align}
With a subscript `0', we will indicate the restriction to handlebodies of genus zero. It is straightforward to observe that the restriction $\hbdy_0$ of $\hbdy$ to genus zero is a cyclic operad. Similarly, $\Hbdy_0$, $\hbdyg_0$ and $\Hbdyg_0$ are cyclic operads. 
We will use the following crucial result on the modular envelope of $ \hbdyg_0$. It follows from
the results of \cite{giansiracusa}
and~\eqref{equivenvelope}:

	\begin{theorem}[{Giansiracusa \cite[Theorem~A]{giansiracusa}}]\label{thmgiansiracusa}
	There is a canonical map of $\Cat$-valued operads
	\begin{align} \Envint \hbdyg_0 \to  \hbdyg \label{eqnmaphandlebody}\end{align}
	which is arity-wise an isomorphism on $\pi_0$
	and which when evaluated on
	$T \in \Graphs$ induces a homotopy equivalence after taking nerve and geometric realization if $T$ has at least one leg.
	If $T$ has no legs, this remains true except for the path component corresponding to the  solid closed torus.
\end{theorem}

We refer to Remark~\ref{remtorus} for a comment on the exception occurring on the component for the solid closed torus.

In \cite{giansiracusa} the Theorem is proven using a version of cyclic operads without arity zero operations. When applying this result in our context we hence have to ignore the arity zero operations in $\framed$. This is also the reason why the solid three-dimensional ball has to be excluded.

\begin{remark}\label{remgiansiracusa}
By Theorem~\ref{Thm: Relation ribbon braids and E2}
we obtain a decomposition
\begin{align}
\Envint \hbdyg_0 (T_{n-1}) =\Envint\Pi \Hbdyg_0(T_{n-1}) = \bigsqcup_{g\in \mathbb{N}_0} M(g,n) \quad \text{for}\quad n\ge 0 \ , 	\end{align}
where $M(g,n)$ is a connected category.
The reason why $T_{n-1}$ appears on the left hand side while we use the index $n$ on the right hand side is that $T_{n-1}$ by definition has $n$ legs (with the convention that $T_{-1}$ is the corolla without legs).
If $(g,n)\neq (1,0)$, 
$|BM(g,n)|$ is equivalent to the classifying space of 
the mapping class group $\Map(H_{g,n})$ of the handlebody $H_{g,n}$ with genus $g$ and $n$ embedded disks. 
In the sequel, we will denote a base point of $M(g,n)$ by $o_{g,n}$.
\end{remark}
If we replace in the definition of $\hbdy$ the handlebodies with surfaces and, consequently, isotopy classes of orientation-preserving diffeomorphisms of handlebodies by isotopy classes of orientation-preserving diffeomorphisms of surfaces, 
we obtain the \emph{(groupoid-valued) modular operad of surfaces} $\surf : \Graphs \to \Grpd$ and, by taking classifying spaces, its topological counterpart $\Surf$. 
The automorphism groups of $\surf$ are precisely the mapping class groups of oriented surfaces (with boundary). 
Since the handlebody group $\Map(H)$ of a handlebody $H$ may be identified with the subgroup of the mapping class group $\Map(\partial H)$ of those isotopy classes of orientation-preserving diffeomorphisms of $\partial H$ that extend to all of $H$, there are maps $\hbdy \subset \surf$ and $\Hbdy \subset \Surf$ of modular operads that induce equivalences of cyclic operads $\hbdy_0 \simeq \surf_0$ and $\Hbdy_0\simeq \Surf_0$ after restriction to genus zero. This follows from the well-known statement that the handlebody subgroups agree with the entire mapping class group for genus zero surfaces, see e.g.\ \cite[Proposition~2.1]{hahe}. 
	
		The topological cyclic operad $\szero$ from Section~\ref{secszeroframed} can be identified in a straightforward way with $\Surf_0$. We may therefore conclude from Proposition~\ref{Thm: Relation ribbon braids and E2}:

	\begin{lemma}\label{lemmaequivcyclicoperadsrbretc}
		There are equivalences of cyclic $\Grpd$-valued operads $ \hbdy_0\simeq  \surf_0 \simeq \Pi \framed \simeq \RBr$.
		\end{lemma}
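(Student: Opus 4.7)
The plan is to assemble the equivalences by chaining together results already established in the paper, with the main care being that each equivalence is compatible not merely with the operad structure but with the full \emph{cyclic} operad structure.

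First I would handle the equivalence $\RBr \simeq \Pi\framed$ directly from what was done in Section~\ref{secszeroframed}. By Proposition~\ref{Thm: Relation ribbon braids and E2} there is an equivalence $\RBr \xrightarrow{\simeq} \Pi\szero$ of cyclic groupoid-valued operads (this was the content of that Proposition, where compatibility with the cyclic structure was checked on the generators $c$ and $\theta$ using the homotopies $h_{n+1}$). By the equivalence $\framed \xrightarrow{\simeq}\szero$ of \eqref{eqnframedsurfequiv} and the fact that the cyclic structure on $\framed$ is, by definition, the one transported along this equivalence (equivalently, the one of Budney \cite{budney} via the operad of conformal balls), applying $\Pi$ gives a zigzag of cyclic operad equivalences $\Pi\framed \simeq \Pi\szero \simeq \RBr$.

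Second, I would identify $\Pi\szero$ with $\surf_0$ as cyclic groupoid-valued operads. An operation in $\szero(n)$ is an $\SO(3)$-orbit of admissible embeddings $\sqcup_{j=0}^n\mathbb{D}^2_j\to\mathbb{S}^2$; removing the (open) images yields a genus-zero surface whose $n+1$ boundary components carry parametrizations inherited from the embeddings $f_j$. This assignment is $\SO(3)$-equivariant and compatible with the gluing of disks and surfaces, so it defines a morphism $\szero \to \Surf_0$ of topological cyclic operads. Since an admissible embedding is determined up to $\SO(3)$ by the complementary surface together with its boundary parametrization, this morphism is a levelwise weak equivalence, so $\Pi\szero \simeq \surf_0$ as cyclic groupoid-valued operads.

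Third, I would obtain $\surf_0 \simeq \hbdy_0$ from the inclusion $\hbdy \subset \surf$ of modular operads discussed right before the lemma. Since every genus-zero surface with parametrized boundary bounds a unique (up to diffeomorphism relative to boundary) genus-zero handlebody, this inclusion is essentially surjective in genus zero at every arity. On automorphism groups it identifies the handlebody group $\Map(H_{0,n})$ with the mapping class group $\Map(\Sigma_{0,n})$, which is the statement of \cite[Proposition~2.1]{hahe} that every orientation-preserving diffeomorphism of a sphere with holes extends over the ball with the same holes drilled out. Together, this yields an equivalence of cyclic groupoid-valued operads $\hbdy_0 \simeq \surf_0$.

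Composing the three equivalences yields the claim. The only genuine content is keeping track of the cyclic (as opposed to just operadic) structure at each step, and this reduces to Proposition~\ref{Thm: Relation ribbon braids and E2} for the step $\RBr\simeq \Pi\framed$ and to the observation that the identifications in steps two and three are manifestly equivariant under the cyclic permutations of legs/boundary components. No further computation is required.
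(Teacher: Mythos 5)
Your argument is essentially the paper's: the paper also obtains $\hbdy_0\simeq\surf_0$ from the inclusion $\hbdy\subset\surf$ together with the fact that handlebody groups exhaust the mapping class groups in genus zero \cite[Proposition~2.1]{hahe}, identifies $\szero$ with $\Surf_0$ in the same "straightforward" way you spell out, and then invokes Proposition~\ref{Thm: Relation ribbon braids and E2} and the equivalence \eqref{eqnframedsurfequiv} (with the cyclic structure on $\framed$ understood via Budney's identification) for the remaining links. Your proposal is correct and matches the paper's route, with only slightly more detail on the $\szero\simeq\Surf_0$ step.
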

	
	Together with Theorem~\ref{thmcycle2} this observation implies:
	
	\begin{corollary}
		A $\Fin$-valued cyclic algebra over $\hbdy_0$ or  $\surf_0$ can be equivalently described as a ribbon Grothendieck-Verdier category in $\Fin$. 
		\end{corollary}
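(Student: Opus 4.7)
The statement is a direct corollary chaining together three earlier results, so my plan is to spell out this formal argument without introducing new ingredients. First, I would invoke Lemma~\ref{lemmaequivcyclicoperadsrbretc}, which provides a zigzag of equivalences $\hbdy_0 \simeq \surf_0 \simeq \Pi \framed \simeq \RBr$ of cyclic groupoid-valued operads. Via the canonical inclusion $\Grpd \hookrightarrow \Cat$, these become equivalences of cyclic $\Cat$-valued operads.

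Next, I would apply the Comparison Theorem~\ref{comparisonthm} (the cyclic version), which ensures that an equivalence of $\Cat$-valued cyclic operads $\Phi \colon \mathcal{O} \to \mathcal{P}$ induces, by precomposition, an equivalence between the bicategories of cyclic algebras with values in \emph{any} symmetric monoidal bicategory $\cat{M}$. Taking $\cat{M} = \Fin$ and applying this successively along the zigzag above yields equivalences of bicategories
\begin{equation*}
\CycAlg(\hbdy_0; \Fin) \simeq \CycAlg(\surf_0; \Fin) \simeq \CycAlg(\Pi\framed; \Fin) \simeq \CycAlg(\RBr; \Fin) \ .
\end{equation*}

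Finally, the right-hand side has already been identified in Theorem~\ref{thmcycle2} with the bicategory of balanced braided Grothendieck-Verdier categories in $\Fin$, which gives the desired characterization of cyclic $\hbdy_0$- and $\surf_0$-algebras.

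Since every step is a direct appeal to a previously established result, there is no genuine obstacle; the only thing to be careful about is the compatibility of the cyclic structures under the equivalences of Lemma~\ref{lemmaequivcyclicoperadsrbretc}, but this was exactly the content of Proposition~\ref{Thm: Relation ribbon braids and E2} (used in the proof of that lemma). Thus the corollary follows essentially by assembling citations.
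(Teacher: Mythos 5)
Your argument is correct and matches the paper's (implicit) proof: the paper obtains this corollary precisely by combining Lemma~\ref{lemmaequivcyclicoperadsrbretc} with the Comparison Theorem~\ref{comparisonthm} (as in Corollary~\ref{coralgrbr}) and Theorem~\ref{thmcycle2}. Your remark that the compatibility of the cyclic structures is exactly Proposition~\ref{Thm: Relation ribbon braids and E2} is also the right place to locate the only nontrivial input.
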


	If we are given a ribbon Grothendieck-Verdier category $\cat{C}$ in $\Fin$, we may --- by this result --- see it as a cyclic $ \hbdy_0$-algebra and restrict it to a cyclic $\hbdyg_0$-algebra that we denote by $\cat{C}^\text{\normalfont a}$. Its modular extension $\widehat{\cat{C}^\text{\normalfont a}}$ in the sense of Proposition~\ref{propmodext} is a modular algebra over the  modular envelope $\Envint\hbdyg_0$. Recall from Proposition~\ref{propcalculusfunctor} that this modular algebra comes with a calculus functor, i.e.\
	a symmetric monoidal functor $ \Calc_{\widehat{\cat{C}^\text{\normalfont a}}}: \Envint \hbdyg_0 \star_\kappa \cat{C} \to \FinVect$, where $\kappa : \cat{C}\boxtimes\cat{C}\to\FinVect$ is the pairing that $\cat{C}$ comes equipped with.
For this calculus functor, we can prove the following statement that we rephrase afterwards in more concrete terms:

	\begin{theorem}\label{thmmainbalancedbraidedcalc1}
		Let $\cat{C}$ be a ribbon Grothendieck-Verdier category in $\Fin$ and $\kappa:\cat{C}\boxtimes\cat{C}\to\FinVect$ the associated non-degenerate symmetric pairing.
		Then the calculus functor of the modular $\Envint\hbdyg_0$-algebra $\widehat{\cat{C}^\text{\normalfont a}}$ is a symmetric monoidal functor
		\begin{align}\label{eqnthecalcfunctor} \Calc_{\widehat{\cat{C}^\text{\normalfont a}}}: \Envint  \hbdyg_0 \star_\kappa \cat{C} \to \FinVect \end{align}
		that we may explicitly describe as follows:
		 After the choice of an order for the $n$ legs of $T_{n-1}$, there is an isomorphism
		\begin{align}
	\label{eqnformulaforcalc}	\Calc_{\widehat{\cat{C}^\text{\normalfont a}}}  (T_{n-1},o_{g,n},X_1,\dots,X_n) \cong \cat{C}(K,X_1\otimes\dots\otimes X_n \otimes \mathbb{F}^{\otimes g}) \quad \text{for all}\quad X_1,\dots,X_n\in\cat{C} \ , 
		\end{align}
		where \begin{itemize}
			\item the $o_{g,n}$ for non-negative integers $g$ and $n$ are the chosen base points in the components of $\Envint\hbdyg_0(T_{n-1})$, see Remark~\ref{remgiansiracusa}, \item and $\mathbb{F}=\int^{X\in\cat{C}} X\otimes X^{\kappa} \in \cat{C}$ is 
			defined by applying the monoidal product to the coevaluation object $\Delta = \int^{X \in \cat{C}} X \otimes X^\kappa$, i.e.\ $\mathbb{F}=\otimes (\Delta)$.
			\end{itemize}
		\end{theorem}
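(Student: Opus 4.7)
The plan parallels the ribbon-graph analysis of Example~\ref{exenvelopeas}, but with the cyclic associative operad replaced by the equivalent cyclic operad $\hbdyg_0 \simeq \RBr$ (Lemma~\ref{lemmaequivcyclicoperadsrbretc}) and the pivotal Grothendieck-Verdier structure replaced by its balanced braided enhancement. By the homotopy left Kan extension formula of Section~\ref{secmodularenvelope}, any object of $(\mathbb{L}U\hbdyg_0)(T_n)$ is represented by a pair $(T_0,\Gamma)$ with $T_0 \in \Forests$ and $\Gamma: T_0 \to T_n$ in $\Graphs$. For the component $M(g,n) \simeq B\Map(H_{g,n})$ (Remark~\ref{remgiansiracusa}), the topologically natural choice of representative is $(T_{n+2g}, \Gamma_{g,n})$, where $T_{n+2g}$ is the corolla with $n+2g$ legs and $\Gamma_{g,n}$ contracts $g$ disjoint pairs of those legs into $g$ loops at the single resulting vertex, leaving the remaining $n$ legs external. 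Under Giansiracusa's identification (Theorem~\ref{thmgiansiracusa}) this corresponds to realizing $H_{g,n}$ as a ball with $g$ one-handles attached.

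By Proposition~\ref{propmodext} and its explicit formula \eqref{eqnpromaphocolim}, the value of $\widehat{\cat{C}^\text{\normalfont a}}$ at $o_{g,n}$ is then computed as the composition of $\End_\kappa^\cat{C}(\Gamma_{g,n})$ with the value of the underlying cyclic algebra on the corolla $T_{n+2g}$. Theorem~\ref{thmcycle2} combined with Corollary~\ref{coralgrbr} identifies the latter, after choosing an order on $\Legs(T_{n+2g})$, with the functor $(Z_1, \ldots, Z_{n+2g}) \mapsto \cat{C}(K, Z_1 \otimes \cdots \otimes Z_{n+2g})$ built from the underlying pivotal Grothendieck-Verdier structure (this is the analog of the associative computation in Example~\ref{exenvelopeas}). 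Applying excision (Theorem~\ref{thmexcision}) to $\Gamma_{g,n}$, whose $g$ internal edges correspond to $g$ pairs of slots, converts this composition into a left exact coend over $g$ dummy variables:
\begin{align*}
\Calc_{\widehat{\cat{C}^\text{\normalfont a}}}(T_n, o_{g,n}, X_1, \ldots, X_n) \ \cong \ \oint^{Y_1, \ldots, Y_g \in \cat{C}} \cat{C}\bigl(K,\, X_1 \otimes \cdots \otimes X_n \otimes Y_1 \otimes Y_1^\kappa \otimes \cdots \otimes Y_g \otimes Y_g^\kappa\bigr).
\end{align*}
By Lemma~\ref{lemmaleftexactcoend} each left exact coend equals evaluation at the coevaluation object $\int^{Y_i} Y_i \boxtimes Y_i^\kappa$, and since the monoidal product $\cat{C}\boxtimes\cat{C}\to\cat{C}$ is left exact in $\Fin$ and hence commutes with these coends, each pair $Y_i \otimes Y_i^\kappa$ collapses to $\mathbb{F} = \int^Y Y \otimes Y^\kappa$. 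This yields precisely the formula \eqref{eqnformulaforcalc}.

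The main obstacle is the verification that $(T_{n+2g}, \Gamma_{g,n})$ really represents the component $M(g,n) \subset \pi_0 (\mathbb{L}U\hbdyg_0)(T_n)$. A count of internal edges gives the correct genus, but showing that the prescribed handle decomposition is equivalent (up to the canonical isomorphisms in the homotopy colimit) to the standard $H_{g,n}$ requires tracking Giansiracusa's comparison map (Theorem~\ref{thmgiansiracusa}) and the chain of equivalences $\hbdyg_0 \simeq \RBr \simeq \Pi\framed$ through the homotopy colimit presentation of the derived modular envelope. For the exceptional component of the solid closed torus, where Giansiracusa's map is only a $\pi_0$-equivalence, the argument still goes through since the formula is determined by the universal property of the modular extension applied to this specific representative.
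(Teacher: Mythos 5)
Your proposal is correct and follows essentially the same route as the paper's proof: the graph $\Gamma_{g,n}:T_{n+2g}\to T_n$ with $g$ internal edges, the genus-zero evaluation $\cat{C}(K,X_1\otimes\dots\otimes X_n\otimes Y_1\otimes\dots\otimes Y_{2g})$, excision (Theorem~\ref{thmexcision}), and Lemma~\ref{lemmaleftexactcoend} to pass to $\mathbb{F}$. The identification of the representative with $o_{g,n}$, which you flag as the main obstacle, is handled in the paper simply by the $\pi_0$-isomorphism of Theorem~\ref{thmgiansiracusa} together with Remark~\ref{remgiansiracusa}, since gluing the genus-zero handlebody with $n+2g$ boundary disks along $g$ pairs yields $H_{g,n}$ and the formula only depends on the isomorphism class of the operation.
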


	 \begin{proof}
	 	As explained before the statement of the result, we obtain the symmetric monoidal functor as a direct consequence of our characterization
	 	of cyclic $\RBr$-algebras and the calculus construction. 
	 	It remains to prove~\eqref{eqnformulaforcalc}:
	 	For $g,n\ge 0$, consider the graph $\Gamma_{g,n}$ with one vertex, $n$ legs and $g$ internal edges. 
	 	Then $\Gamma_{g,n} : T_{n-1+2g}\to T_{n-1}$ is a morphism in $\Graphs$ with $\Envint\hbdyg_0(\Gamma_{g,n})o_{0,n+2g}\cong o_{g,n}$ by Theorem~\ref{thmgiansiracusa} (on level of $\pi_0$) and Remark~\ref{remgiansiracusa}. 
	 	Since by definition
	 \begin{align}	\Calc_{\widehat{\cat{C}^\text{\normalfont a}}}  (T_{n-1+2g},o_{0,n+2g},X_1,\dots,X_n,Y_1,\dots,Y_{2g}) \cong \cat{C}(K,X_1\otimes\dots\otimes X_n\otimes Y_1 \otimes \dots \otimes Y_{2g}) \ , 
	 \end{align}
	 we can conclude from the Excision Theorem~\ref{thmexcision}
	 \begin{align}
	 \Calc_{\widehat{\cat{C}^\text{\normalfont a}}}  (T_{n-1},o_{g,n},X_1,\dots,X_n)\cong \oint^{Y_1,\dots,Y_g \in \cat{C}} \cat{C}(K,X_1\otimes\dots\otimes X_n\otimes Y_1 \otimes Y_1^\kappa \otimes \dots\otimes  Y_g \otimes Y_{g}^\kappa) \ . 
	 \end{align}
	 Now \eqref{eqnformulaforcalc} follows from Lemma~\ref{lemmaleftexactcoend} which allows us to express the left exact coend by means of  $\mathbb{F}=\int^{X\in\cat{C}} X\otimes X^{\kappa} \in \cat{C}$.
	 	\end{proof}

A less concise, but more explicit version of this result reads as follows:

\begin{theorem}\label{thmmainbalancedbraidedcalc2}
	Let $\cat{C}$ be a ribbon Grothendieck-Verdier category in $\Fin$. Then for $(g,n)\in\mathbb{N}_0^2$ and $X_1,\dots,X_n\in\cat{C}$, the morphism space $\cat{C}(K,X_1\otimes\dots\otimes X_n \otimes \mathbb{F}^{\otimes g})$ comes with a canonical action of the handlebody group $\Map(H_{g,n})$ for the handlebody $H_{g,n}$ of genus $g$ and $n$ boundary disks if $(g,n)\neq (1,0)$. 
	\end{theorem}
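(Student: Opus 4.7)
The plan is to deduce the theorem as essentially a direct corollary of Theorem~\ref{thmmainbalancedbraidedcalc1}, by extracting automorphism group actions from the calculus functor via the Grothendieck construction. All the heavy lifting has been done upstream, so this is more assembly than computation.

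First, I would fix $(g,n) \in \mathbb{N}_0^2$ with $(g,n) \neq (1,0)$ and objects $X_1,\dots,X_n \in \cat{C}$, and consider the object $(T_n, o_{g,n}, X_1 \boxtimes \cdots \boxtimes X_n)$ in the category $\mathbb{L}U \hbdyg_0 \star_\kappa \cat{C}$, where $o_{g,n}$ is the basepoint of the component $M(g,n)$ of $\mathbb{L}U \hbdyg_0(T_n)$ singled out in Remark~\ref{remgiansiracusa}. By construction, $\mathbb{L}U \hbdyg_0 \star_\kappa \cat{C}$ is the pullback over $\Graphs$ of the Grothendieck constructions $\int \mathbb{L}U \hbdyg_0 \to \Graphs$ and $\int \coEnd_\kappa^\cat{C} \to \Graphs$. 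Consequently, any automorphism $\varphi \in \Aut_{\mathbb{L}U\hbdyg_0(T_n)}(o_{g,n})$ lying over $\id_{T_n} \in \Graphs$ lifts canonically (by taking the identity on the $X_i$-data and on $T_n$) to an automorphism of the triple $(T_n, o_{g,n}, X_1 \boxtimes \cdots \boxtimes X_n)$ in $\mathbb{L}U \hbdyg_0 \star_\kappa \cat{C}$.

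Next I would invoke Theorem~\ref{thmgiansiracusa} of Giansiracusa, together with its consequence recorded in Remark~\ref{remgiansiracusa}: since $(g,n) \neq (1,0)$, the canonical map $\mathbb{L}U \hbdyg_0 \to \hbdyg$ is an equivalence on the connected component $M(g,n)$, so there is a canonical isomorphism of groups
\begin{align}
\Aut_{\mathbb{L}U\hbdyg_0(T_n)}(o_{g,n}) \cong \Map(H_{g,n}) \ .
\end{align}
Applying the functor $\Calc_{\widehat{\cat{C}^{\text{\normalfont a}}}}$ from Theorem~\ref{thmmainbalancedbraidedcalc1} to the automorphisms constructed above, and using the explicit identification \eqref{eqnformulaforcalc}, yields the desired group homomorphism
\begin{align}
\Map(H_{g,n}) \to \Aut_{\FinVect}\!\bigl( \cat{C}(K, X_1 \otimes \cdots \otimes X_n \otimes \mathbb{F}^{\otimes g}) \bigr) \ .
\end{align}
Naturality in the $X_i$ follows from functoriality of the calculus construction in its $\cat{C}$-arguments.

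There is no real obstacle in the proof as stated: the substance sits in Theorem~\ref{thmcycle2} (relating balanced braided Grothendieck-Verdier categories to cyclic $\framed$-algebras), Giansiracusa's Theorem~\ref{thmgiansiracusa}, and the calculus construction with its excision property (Theorem~\ref{thmexcision}), all of which have been established. The only mild subtlety worth flagging explicitly is the role of the hypothesis $(g,n) \neq (1,0)$: this is precisely the single component of $\mathbb{L}U \hbdyg_0$ on which Theorem~\ref{thmgiansiracusa} fails to produce the full handlebody group, so the theorem cannot be stated there without additional input.
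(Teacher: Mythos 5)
Your argument for the generic case is exactly the paper's: for $(g,n)\neq (1,0),(0,1),(0,0)$ one evaluates the calculus functor of Theorem~\ref{thmmainbalancedbraidedcalc1} on automorphisms of the basepoint $o_{g,n}$ lying over $\id_{T_n}$ and identifies $\Aut(o_{g,n})\cong\Map(H_{g,n})$ via Theorem~\ref{thmgiansiracusa}. However, there is a genuine gap in your claim that this covers \emph{all} $(g,n)\neq(1,0)$. Giansiracusa's operad $\hbdyg$ excludes precisely the handlebodies $H_{0,0}$ (the closed $3$-ball) and $H_{0,1}$ (the ball with one boundary disk), so Theorem~\ref{thmgiansiracusa} and Remark~\ref{remgiansiracusa} say nothing about these components; in fact the derived modular envelope $\mathbb{L}U\hbdyg_0$ has no genus-zero component over corollas with at most one leg at all (gluing genus-zero handlebodies with at least two boundary disks along a connected graph with $\le 1$ external legs forces at least one loop, hence genus $\ge 1$), so there simply is no basepoint $o_{0,0}$ or $o_{0,1}$ to which your construction can be applied. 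Consequently your argument, as written, does not produce the asserted $\Map(H_{0,0})$- and $\Map(H_{0,1})$-actions, and the hypothesis you flag, $(g,n)\neq(1,0)$, is not the only place where Giansiracusa's result fails to feed into the machine.

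The paper closes these two cases separately, and this requires a small amount of extra input rather than pure bookkeeping: the case $(g,n)=(0,0)$ is trivial because $\Map(H_{0,0})=1$, while for $(g,n)=(0,1)$ one must \emph{define} the action of $\Map(H_{0,1})\cong\mathbb{Z}$ on $\cat{C}(K,X)$ by hand, letting the generator act by postcomposition with the balancing $\theta_X:X\to X$ (consistently with how Dehn twists around boundary components act in the generic case). To repair your proof, add this case distinction; the rest of your assembly (lifting $\varphi\in\Aut(o_{g,n})$ to the automorphism $(\id_{T_n},\varphi,\id)$ of $(T_n,o_{g,n},X_1\boxtimes\cdots\boxtimes X_n)$ in $\mathbb{L}U\hbdyg_0\star_\kappa\cat{C}$, applying $\Calc_{\widehat{\cat{C}^{\text{\normalfont a}}}}$, and transporting along \eqref{eqnformulaforcalc}) is sound and agrees with the paper's intended mechanism.
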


\begin{proof}
	Since $\Map(H_{g,n})=1$ for $(g,n)=(0,0)$ and $(g,n)=(0,1)$, these two cases are trivial.
	
	Let now $(g,n)\neq (1,0),(0,1),(0,0)$.
	 Theorem~\ref{thmmainbalancedbraidedcalc1}, when combined with the definition of the calculus construction,
	provides for us functors
	$M(g,n)\to\Vect$ sending $o_{g,n}$ to the vector space 
	$\Calc_{\widehat{\cat{C}^\text{\normalfont a}}}  (T_{n-1},o_{g,n},X_1,\dots,X_n)$. 
	The category $M(g,n)$ was defined in Remark~\ref{remgiansiracusa} and has the property that $|BM(G,n)|$ is a classifying space of $\Map(H_{g,n})$.
	From the fact that $\hbdyg$ is groupoid-valued 
	and Remark~\ref{remmodext},
	it follows that this functor $M(g,n)\to \Vect$ 
	sends \emph{all} morphisms to isomorphisms.
	As a consequence, the functor $M(g,n)\to \Vect$ descends to the localization at all morphisms. Hence, $\Calc_{\widehat{\cat{C}^\text{\normalfont a}}}  (T_{n-1},o_{g,n},X_1,\dots,X_n)$ inherits an action of $\Map(H_{g,n})$.
	Now the assertion follows from~\eqref{eqnformulaforcalc}.
\end{proof}

\begin{remark}\label{remcaveatorder}
	As a small caveat, we should mention that the statement that $\cat{C}(K,X_1\otimes\dots\otimes X_n \otimes \mathbb{F}^{\otimes g})$ carries an action of the handlebody group contains a small abuse of language and must be interpreted correctly:
	In the first place,
	the value of the calculus functor carries the handlebody group representation. It is then transferred to the morphism space 
	$\cat{C}(K,X_1\otimes\dots\otimes X_n \otimes \mathbb{F}^{\otimes g})$ by means of \eqref{eqnformulaforcalc} after \emph{ordering} the boundary components (this is the same choice discussed in Example~\ref{exenvelopeas}).
 When working with the calculus functor, we do not have to make this choice (there the labels are really attached to the boundaries and never numbered). We still use in Theorem~\ref{thmmainbalancedbraidedcalc2} the formulation through the morphism spaces and a numbering even if it contains this abuse of notation. This is in order to make contact to the literature where this abuse of notation seems to be standard (after all, it is not very problematic if one is aware of the subtlety).
	\end{remark}

\begin{remark}\label{remtorus}
	The exception made in Theorem~\ref{thmmainbalancedbraidedcalc2} for the solid closed torus
	is a consequence of the corres\-ponding exception appearing in Giansiracusa's result.
	This issue arises from the non-contractibility of the disk complex for the torus as explained in \cite[Section~6.2]{giansiracusa}. 
	Nonetheless, the calculus functor for a ribbon Grothendieck-Verdier category $\cat{C}$ actually can be evaluated on the solid closed torus, where it  yields the vector space $\cat{C}(K,\mathbb{F})$, see Theorem~\ref{thmmainbalancedbraidedcalc1}.
	This vector space comes with additional structure from the component of the modular envelope attached to the solid closed torus. The investigation of this structure is beyond the scope of this article, see however the statements that can be made in the modular case below in Proposition~\ref{propcomplyu}.
	\end{remark}

\begin{example}\label{exwithoutrigidity}
	Let $G$ be an Abelian group and ${\FinVect_G}^{\omega,g_0}$ the ribbon Grothendieck-Verdier category associated 
	to an Abelian 3-cocycle $\omega$ on $G$ with coefficients in $\mathbb{C}^\times$ and duality $D_{g_0} = \mathbb{C}_{g_0} \otimes (-)^*$ with $g_0=h_0^{-2}$ for some $h_0\in G$, see Example~\ref{exampleGVnonrigid}.
	For this \emph{semisimple}
	category, we can conclude by arguments similar to those for \cite[Corollary~5.1.9]{kl} that the coend $\mathbb{F}$ (as defined in Theorem~\ref{thmmainbalancedbraidedcalc1}) is given by
	 \begin{align}
	 \mathbb{F}\cong \bigoplus _{g\in G} \mathbb{C}_g \otimes \mathbb{C}_{g_0g^{-1}} \cong \bigoplus_{g \in G} \mathbb{C}_{g_0}=\mathbb{C}[G]\otimes\mathbb{C}_{g_0} \ ,
	 \end{align}
	 where $\mathbb{C}[G]\otimes\mathbb{C}_{g_0}$ is the tensoring of $\mathbb{C}_{g_0}$ with the free $\mathbb{C}$-vector space on the set $G$.
	 This implies $\mathbb{F}^{\otimes \ell}\cong \C[G]^{\otimes \ell} \otimes \mathbb{C}_{g_0^\ell}$ for $\ell \ge 0$.
	 As a consequence, the vector space associated by Theorem~\ref{thmmainbalancedbraidedcalc1} to a (for simplicity closed) surface of genus $\ell\ge 1$ is
	 \begin{align}
	 {\FinVect_G}^{\omega,g_0}(\mathbb{C}_{g_0} ,      (\mathbb{C}[G]\otimes\mathbb{C}_{g_0})^{\otimes \ell}       )\cong \C[G]^{\otimes \ell} \delta_{g_0,g_0^\ell} \ . 
	 \end{align}
	For $\ell\ge 2$, the handlebody group representation on this vector space can be computed explicitly using excision:
	For example, it follows from the definition of the balancing in Example~\ref{exampleGVnonrigid} that
	the Dehn twist around the $m$-th handle, $1\le m\le \ell$
	acts as the linear automorphism $\C[G]^{\otimes \ell} \delta_{g_0,g_0^\ell}$
	which acts as the identity map on all tensor factors $\C[G]$ except for the $m$-th one where it is given by the map
	\begin{align} \C[G]\to \C[G]\ , \quad g \mapsto \frac{q(g h_0)}{q(h_0)} \cdot g \ ; 
	\end{align}
	here $q:G\to\mathbb{C}^\times$ is the quadratic form associated to $\omega$.
\end{example}

In Example~\ref{exwithoutrigidity}, a category whose Grothendieck-Verdier structure does not come from actual rigidity was covered.
But of course, Theorem~\ref{thmmainbalancedbraidedcalc2} applies in particular in the rigid case.  In order to exploit this, let us recall some more terminology:
A \emph{finite tensor category} \cite{etingofostrik} is
a finite category
(as defined in Example~\ref{exsymmoncat})
 with a rigid monoidal structure and simple unit. 
A \emph{finite ribbon category} is a finite braided tensor category equipped with a balancing compatible with the duality.
By Theorem~\ref{thmcycle2} this is precisely a $\Fin$-valued cyclic $\RBr$-algebra with simple 
unit whose Grothendieck-Verdier structure comes from rigidity.

One way to obtain a finite ribbon category
 is by taking categories of finite-dimensional modules 
 over a finite-dimensional ribbon Hopf algebra $A$, 
 see e.g.\ \cite[XIV.6]{kassel}. 
 In this case, the coend $\mathbb{F}$ is isomorphic to $A_\text{coadj}^*$ \cite[Theorem~7.4.13]{kl}, 
 i.e.\ the dual $A^*$ of $A$ with the coadjoint action of $A$ given by
\begin{align}
	A\otimes A^* \to A^*, \quad a \otimes \alpha \to (b \mapsto \alpha (S(a_{(1)})  ba_{(2)}  )) \ , 
\end{align}
where $S$ is the antipode of $A$ and $\Delta a = a_{(1)} \otimes a_{(2)}$ the Sweedler notation for the coproduct. 
Now Theorem~\ref{thmmainbalancedbraidedcalc2} specializes to:

\begin{corollary}\label{corhopf}
	Let $A$ be a finite-dimensional ribbon Hopf algebra.
	Then for any non-negative integers $g$ and $n$ with $(g,n)\neq (1,0)$ 
	and any finite-dimensional $A$-modules $X_1,\dots,X_n$,
	the vector space 	
	\begin{align} \Hom_A\left( k ,  X_1\otimes \dots \otimes X_n \otimes \left(   A_\text{coadj}^*   \right)^{\otimes g}  \right)
		\end{align}
	of $A$-invariants of the module 
	$X_1\otimes \dots \otimes X_n \otimes \left(   A_\text{coadj}^*   \right)^{\otimes g}$
	comes canonically with an action of the 
	mapping class group of the handlebody with genus $g$ and $n$ boundary components.
	\end{corollary}

	The handlebody group representations from Theorem~\ref{thmmainbalancedbraidedcalc2} (and in particular Corollary~\ref{corhopf}) when given in this generality (note in particular that no non-degeneracy of the braiding is assumed) are new to the best of our knowledge.
	However, under much stronger assumptions on the category $\cat{C}$, namely \emph{modularity} (to be defined momentarily), they relate to the Lyubashenko construction \cite{lyubacmp,lyu,lyulex} as we will explain now:
For a finite braided tensor category $\cat{C}$, we may define the \emph{Müger center} \cite{mueger}, i.e.\ the full subcategory of $\cat{C}$ spanned by all objects $X\in\cat{C}$ such that $c_{Y,X}c_{X,Y}=\id_{X\otimes Y}$ for all $Y\in \cat{C}$ (these objects are called \emph{transparent}). 
A finite braided tensor category whose Müger center is trivial in the sense that it is generated under finite direct sums by the monoidal unit is called \emph{non-degenerate}. 
Recently, there has been significant progress in the understanding of non-degeneracy through the equivalent characterizations given in \cite{shimizumodular} and the factorization homology approach in \cite{bjss}. 
A \emph{modular category} is a finite ribbon category whose underlying finite braided category is non-degenerate (it is important to remark that this definition does \emph{not} include semisimplicity). 
Modular categories are absolutely central objects living at the intersection of conformal field theory, topological field theory and representation theory.
A thorough discussion of modular categories is beyond the scope of this article; a biased and by no means exhaustive list of references is \cite{rt1,baki,huang, kl, turaev,BDSPV15, jfcs}. 

By the Lyubashenko construction \cite{lyubacmp,lyu,lyulex} a modular category $\cat{C}$ gives rise to a consistent system of projective mapping class group representations on the morphism spaces $\cat{C}(I,X_1\otimes\dots\otimes X_n \otimes \mathbb{F}^{\otimes g})$, see also \cite{jfcs} for a perspective on these representations through the Lego-Teichmüller game and \cite{dmf} for a homotopy coherent perspective. 
The vector spaces $\cat{C}(I,X_1\otimes\dots\otimes X_n \otimes \mathbb{F}^{\otimes g})$ together with their mapping class group actions
are often referred to as \emph{conformal blocks}. 
These mapping class group representations can be restricted to the handlebody part (then they will be non-projective, but actually linear).
On the other hand, any modular category is in particular a ribbon Grothendieck-Verdier category. Hence, we also have handlebody group representations by Theorem~\ref{thmmainbalancedbraidedcalc2}. The following result is a comparison:

	\begin{proposition}\label{propcomplyu}
	Let $\cat{C}$ be a modular category. Then the following two actions of $\Map(H_{g,n})$ on $\cat{C}(I,X_1\otimes\dots\otimes X_n\otimes \mathbb{F}^{\otimes g})$ are equivalent:
	\begin{itemize}
	
	\item The action from Theorem~\ref{thmmainbalancedbraidedcalc2} (it extends in this case also to the solid closed torus).
	 
	\item The restriction of the projective Lyubashenko mapping class group action to the handlebody part.
	
	\end{itemize}
	\end{proposition}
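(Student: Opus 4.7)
The plan is to show that both actions on $\cat{C}(I,X_1\otimes\dots\otimes X_n\otimes \mathbb{F}^{\otimes g})$ arise as the modular extension of the \emph{same} cyclic $\hbdyg_0$-algebra structure on $\cat{C}$, namely the one that corresponds, via Lemma~\ref{lemmaequivcyclicoperadsrbretc} and Theorem~\ref{thmcycle2}, to the balanced braided Grothendieck-Verdier structure underlying the modular category $\cat{C}$. By construction, the handlebody group action of Theorem~\ref{thmmainbalancedbraidedcalc2} is obtained in exactly this way (Proposition~\ref{propmodext} combined with Theorem~\ref{thmmainbalancedbraidedcalc1}). It therefore suffices to identify Lyubashenko's action with the modular extension of the same cyclic $\hbdyg_0$-algebra.

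To do so, I would first observe that Lyubashenko's construction \cite{lubacmp,luba,lubalex}, when restricted to the handlebody part, produces a consistent family of linear handlebody group representations with compatible sewing isomorphisms, i.e.\ a modular $\hbdyg$-algebra structure on $\cat{C}$ (outside the component of the solid closed torus). By Theorem~\ref{thmgiansiracusa} and the Comparison Theorem~\ref{comparisonthm}, any such structure is determined by its underlying cyclic $\hbdyg_0$-algebra obtained through restriction to genus zero along the equivalence $\hbdyg_0 \simeq \Surf_0 \simeq \Pi \framed \simeq \RBr$. Hence, the comparison reduces to showing that the cyclic $\RBr$-algebra structure extracted from Lyubashenko's construction in genus zero agrees with the one coming from the balanced braided Grothendieck-Verdier structure of $\cat{C}$.

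By Corollary~\ref{remgenrel} it suffices in turn to check agreement on the generators of $\RBr$: the multiplication, the unit, the braiding, the balancing, and the cyclic invariance isomorphism for the binary operation. For the monoidal structure, the braiding and the balancing the match is essentially built into Lyubashenko's construction, which by design evaluates on the pair of pants, the cylinder and the disk configuration realizing the braid generator to the tensor product, the balancing and the braiding of $\cat{C}$, respectively. The only non-formal point is the cyclic invariance isomorphism $\Omega$ (equivalently $\psi$ or $\gamma$). Here the hypothesis of modularity enters: since the Grothendieck-Verdier duality then comes from genuine rigidity with $K\cong I$, Lemma~\ref{lemmapivribbon} together with the compatibility relation (RB) of Theorem~\ref{thmcycrbr} forces $\psi$ to be the unique pivotal structure determined by the braiding and the balancing, so the check reduces to verifying that the half-twist in the three-punctured sphere used by Lyubashenko produces precisely this $\psi$.

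The main obstacle is exactly this final identification: Lyubashenko's recipe is phrased via a presentation of surface mapping class groups in terms of Dehn twists, while the operadic description uses generators of $\RBr$. Bridging this gap cleanly is best done through the Lego-Teichmüller reformulation of~\cite{jfcs}, which makes the operadic origin of the generators transparent and thereby pins down the cyclic invariance isomorphism on the nose rather than merely projectively. Once agreement of the cyclic $\RBr$-algebras is established, Theorem~\ref{thmgiansiracusa} automatically promotes it to agreement of the induced modular $\hbdyg$-algebras on all components except the solid closed torus; for the latter (cf.\ Remark~\ref{remtorus}) one can argue separately, using that in the modular case non-degeneracy of the braiding equips $\mathbb{F}$ with a canonical $\mathrm{SL}(2,\Z)$-action that supplies the missing equivalence on the toroidal component.
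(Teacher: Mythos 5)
Your overall strategy is the same as the paper's: regard Lyubashenko's construction as a modular algebra, pull it back to $\mathbb{L}U\hbdyg_0$, note that this pullback is the modular extension of its genus-zero cyclic part, and reduce the comparison to an identification of two cyclic $\hbdyg_0\simeq\RBr$-algebra structures on $\cat{C}$, checked on generators (braiding, balancing, and the cyclic-invariance/pivotal datum --- on the last point you are in fact more explicit than the paper, invoking Lemma~\ref{lemmapivribbon} and relation (RB), which is a clean way to see that nothing beyond braiding and balancing needs to be matched). One caveat on the reduction step: the Comparison Theorem~\ref{comparisonthm} is not literally available, since $\mathbb{L}U\hbdyg_0\to\hbdyg$ is \emph{not} an equivalence of modular operads (it fails on the solid-torus component), so a modular $\hbdyg$-algebra is not ``determined by'' its genus-zero cyclic restriction; the correct move --- which is what you effectively do and what the paper does --- is to compare the two $\mathbb{L}U\hbdyg_0$-algebras and transfer back only on those components where Theorem~\ref{thmgiansiracusa} identifies the automorphism groups with handlebody groups.

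The one genuine gap is the toroidal component, i.e.\ the parenthetical claim that the action of Theorem~\ref{thmmainbalancedbraidedcalc2} extends to the solid closed torus. No structure on $\cat{C}$ can ``supply the missing equivalence'' there: the map $\mathbb{L}U\hbdyg_0\to\hbdyg$ simply is not an equivalence on that component, whatever $\cat{C}$ is. What must be shown is that the $\Aut(o_{1,0})$-action on $\cat{C}(I,\mathbb{F})$ factors through $\Map(H_{1,0})$ and agrees there with Lyubashenko's restriction. This follows immediately from the identification you have already set up: once $\widehat{\cat{C}^{\text{a}}}\simeq h^*\cat{C}^{\text{Lyu}}$ is established, the right-hand side is by construction pulled back from an honest $\surfc$- (hence $\hbdy$-) algebra, so its action on the torus component factors through the handlebody group, and therefore so does that of $\widehat{\cat{C}^{\text{a}}}$ --- which is exactly how the paper argues. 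Your $SL(2,\Z)$ remark gestures at the same mechanism but, as stated, does not prove the required factorization. (The degenerate cases $(g,n)=(0,0),(0,1)$, which $\hbdyg$ excludes, also need a one-line direct check, as in the paper.)
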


	We could give the proof already here, but it can be formulated much more concisely using the terminology of the next section. Therefore, we defer the proof to page~\pageref{hereistheproofforproplyu}.

	Proposition~\ref{propcomplyu} has the following significance: The Lyubashenko construction is to a large extent an algebraic construction which starts from the vector spaces $\cat{C}(K,X_1\otimes\dots\otimes X_n \otimes \mathbb{F}^{\otimes g})$ and establishes the corresponding mapping class group actions by a presentation of mapping class groups in terms of generators and relations. 
	Proposition~\ref{propcomplyu} now tells us that the vector spaces $\cat{C}(K,X_1\otimes\dots\otimes X_n \otimes \mathbb{F}^{\otimes g})$ and at least the handlebody part of the actions do not only have an intrinsically  topological description, but in fact also a universal property coming from the  modular envelope of the cyclic operad of genus zero surfaces.

	\subsection{Application II: Grothendieck-Verdier duality for the evaluation  of a modular functor on the circle\label{app-mf}}
	In the preceding subsection,
	it was already mentioned that the Lyubashenko construction does not only yield handlebody group representations, but actually projective mapping class group representations --- they form a structure that is commonly referred to as a \emph{modular functor}.
	While modular functors are certainly not the main object of study for the present article, 
	we may still use our results to prove a duality statement for modular functors.
	
	A modular functor \cite{turaev,tillmann,baki} is, roughly speaking, a consistent system of projective mapping class group representations.
	Many variants of this notion exist; we will momentarily present the version used in this article and briefly comment on the relation to other definitions.
	
	As already mentioned, modular functors feature certain \emph{projective} mapping class representations, and this projectivity is described by considering certain central extensions of mapping class groups.
	The relevant central extensions are obtained by means of 2-cocycles on the mapping class groups arising from the framing anomaly \cite{atiyahframing,gilmermasbaum}. 
	In the language of modular operads, this can be formulated as follows:
	For a corolla $T$, we define the groupoid $\surfc(T)$. Objects are compact oriented surfaces with boundary, a parametrization $\sqcup_{\Legs(T)} \mathbb{S}^1 \to \Sigma$ of the boundary $\partial \Sigma$ and a maximal isotropic subspace $\lambda$ of the presymplectic vector space $H_1(\Sigma;\mathbb{Q})$ with respect to the intersection pairing $H_1(\Sigma;\mathbb{Q})\otimes H_1(\Sigma;\mathbb{Q})\to\mathbb{Q}$ (if $\Sigma$ is closed, $H_1(\Sigma;\mathbb{Q})$ is symplectic, and a maximal isotropic subspace is precisely a Lagrangian subspace). A morphism $(\Sigma,\lambda)\to(\Sigma',\lambda')$ is a pair $(\phi,n)$ of an isotopy class of orientation-preserving diffeomorphisms $\phi : \Sigma\to\Sigma'$ compatible with the boundary parametrizations, and a weight $n\in\mathbb{Z}$. The composition of composable morphisms $(\Sigma,\lambda) \ra{(\phi_0,n_0)} (\Sigma',\lambda')\ra{(\phi_1,n_1)} (\Sigma'',\lambda'')$ is the pair $(\phi_1\phi_0,n_0+n_1+\mu(   \lambda'' , {\phi_1}_* \lambda' , {\phi_1\phi_0}_* \lambda       ))$, where $\mu(-,-,-)$ is the Maslov index of three maximal isotropic subspaces of $H_1(\Sigma'';\mathbb{Q})$, see \cite[IV.3.5]{turaev} for a detailed definition. 
	To a morphism $\Gamma : T \to T'$ in $\Graphs$, we may associate a functor $\surfc(\Gamma):\surfc(T)\to\surfc(T')$ as follows: For $(\Sigma,\lambda)\in \surfc(T)$, the graph $\Gamma$ prescribes a gluing of $\Sigma$ along those boundary components attached to legs arising from internal edges of $\Gamma$. We denote the glued surface by $\Sigma^\Gamma$ and the quotient map by $q^\Gamma : \Sigma \to \Sigma_\Gamma$. Having established this notation, we set $\surfc(\Gamma)(\Sigma,\lambda):=(\Sigma^\Gamma,q^\Gamma_* \lambda)$. 
	This way,
	we obtain the \emph{(groupoid-valued) central extension of the surface operad} $\surfc: \Graphs \to \Grpd$, a modular operad that comes with a epimorphism $\surfc\to\surf$. We denote by $\Surfc$ the associated topological modular operad. 
The well-known fact that the cocycle describing the framing anomaly vanishes on the handlebody subgroups of the mapping class groups can be phrased as follows in terms of modular operads:
	
\begin{lemma}\label{lemmainclhbdyc}
	The map $\hbdy \to \surf$ of modular operads canonically lifts to a map $\hbdy \to \surfc$ of modular operads. 
	\end{lemma}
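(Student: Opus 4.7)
The plan is to produce the lift by equipping each handlebody with a canonical Lagrangian in the homology of its boundary, and to show that handlebody diffeomorphisms preserve this Lagrangian, so the framing-anomaly cocycle evaluated on handlebody data is identically zero.

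More precisely, to a handlebody $H$ with boundary parametrization $\psi : \sqcup \mathbb{S}^1 \to \partial H$, I would associate the object $(\partial H, \psi, \lambda_H) \in \surfc(T)$, where
\begin{align}
\lambda_H := \ker\bigl( H_1(\partial H;\mathbb{Q}) \to H_1(H;\mathbb{Q}) \bigr) \ .
\end{align}
It is a classical fact (half-lives-half-dies) that $\lambda_H$ is a maximal isotropic subspace of $H_1(\partial H;\mathbb{Q})$ with respect to the intersection pairing. For a morphism in $\hbdy(T)$, i.e.\ an isotopy class of an orientation-preserving diffeomorphism $\Phi : H \to H'$ of handlebodies respecting the boundary parametrizations, I would assign the morphism $( \Phi|_{\partial H}, 0)$ in $\surfc(T)$. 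The key point is that $\Phi|_{\partial H}$ maps $\lambda_H$ to $\lambda_{H'}$ because $\Phi$ is defined on all of $H$ and carries the kernel of $H_1(\partial H;\mathbb{Q}) \to H_1(H;\mathbb{Q})$ to the kernel of $H_1(\partial H';\mathbb{Q}) \to H_1(H';\mathbb{Q})$ by naturality.

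Functoriality then reduces to verifying that the Maslov correction term vanishes: for two composable diffeomorphisms $\Phi_0,\Phi_1$ of handlebodies, all three Lagrangians appearing in $\mu( \lambda_{H''} , (\Phi_1)_* \lambda_{H'} , (\Phi_1\Phi_0)_* \lambda_H )$ coincide with $\lambda_{H''}$, and the Maslov index of three coincident maximal isotropic subspaces is $0$. Hence weights compose additively to $0$ and the assignment is functorial on each $\hbdy(T)$. Moreover, $(\Phi, 0)$ is the identity in $\surfc(T)$ iff $\Phi$ is the identity in $\hbdy(T)$, so the lift is a monomorphism on morphism groupoids.

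It then remains to promote this fiberwise construction to a map of modular operads, i.e.\ to check compatibility with the operadic composition along a morphism $\Gamma\colon T \to T'$ in $\Graphs$ and with the symmetric monoidal structure. For the former, gluing two handlebodies $H_1,H_2$ along boundary circles produces a handlebody $H_1 \cup H_2$ whose canonical Lagrangian $\lambda_{H_1 \cup H_2}$ equals the pushforward $q^\Gamma_*(\lambda_{H_1} \oplus \lambda_{H_2})$ under the gluing quotient map --- this follows by applying the Mayer-Vietoris sequence to the decomposition of $H_1 \cup H_2$ and chasing kernels. The main subtle step, and the one I would handle most carefully, is precisely this Mayer-Vietoris compatibility of the canonical Lagrangian with gluing, since it is what ensures that the lift $\hbdy \to \surfc$ intertwines the modular operad structures, after which compatibility with disjoint union and symmetries is formal.
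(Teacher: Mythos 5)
Your construction is exactly the paper's: you assign to a handlebody its boundary surface together with the kernel Lagrangian $\lambda_H=\ker\bigl(H_1(\Sigma_H;\mathbb{Q})\to H_1(H;\mathbb{Q})\bigr)$, send a handlebody mapping class to its boundary restriction with weight zero, and obtain functoriality from the vanishing of the Maslov index of three coincident maximal isotropic subspaces. The only divergence is that you explicitly flag and sketch (via Mayer--Vietoris) the compatibility of the canonical Lagrangian with the operadic gluing, a point the paper dismisses as easily seen, so your proposal is correct and, if anything, slightly more complete on that step.
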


\begin{proof}
	The desired map $\hbdy \to \surfc$ sends a handlebody $H \in \hbdy(T)$ to its boundary surface $\Sigma_H\in  \surf(T)$ plus the maximal isotropic subspace $\lambda(H):= \ker \left(   H_1(\Sigma_H;\mathbb{Q} ) \to H_1(H;\mathbb{Q})    \right)$. A morphism $\phi : H\to H'$ in $\hbdy(T)$ is sent to the induced map $\Sigma_\phi : \Sigma_H\to\Sigma_{H'}$ and weight zero. The map $\Sigma_\phi$ then sends $\lambda(H)$ to $\lambda(H')$. For composable morphisms $H\ra{\phi_0}H'\ra{\phi_1}H''$ in $\hbdy(T)$, we observe
	\begin{align} (\Sigma_{\phi_1},0)\circ(\Sigma_{\phi_0},0)= (\Sigma_{\phi_1\phi_0} ,   \mu (  \lambda(H''), {\Sigma_{\phi_1}} _*   \lambda(H'), {\Sigma_{\phi_1\phi_0}} _*   \lambda(H)     )   ) = (\Sigma_{\phi_1\phi_0}, \mu (   \lambda(H''),\lambda(H''),\lambda(H'')  ))
	\end{align} But the Maslov index $\mu (   \lambda(H''),\lambda(H''),\lambda(H'')  )$ is zero by equation (3.5.a) in \cite[IV.3.5]{turaev}. This proves that the assignments actually yield a functor $\hbdy(T)\to\surfc(T)$. It can be easily seen to also be a map of modular operads. 
	\end{proof}

The modular operad $\surfc$ allows us to give a concise definition of the notion of a modular functor:

	 \begin{definition}\label{defmodularfunctor}
	 	A \emph{($\Fin$-valued) modular functor} is a modular $\surfc$-algebra in $\Fin$. 
	 	\end{definition}
 	
 	A modular functor, according to the above definition has an underlying category $\cat{C}\in\Fin$, which we think of as assigned to the circle. The structure of a $\surfc$-algebra on $\cat{C}\in\Fin$ assigns to surfaces with $p$ incoming and $q$ outgoing boundary components a map $\cat{C}^{\boxtimes p}\boxtimes\cat{C}^{\boxtimes q}\to \FinVect$ in $\Fin$
 	 (which can be seen as a map $\cat{C}^{\boxtimes p}\to     \cat{C}^{\boxtimes q}$ by duality). Mapping classes of the surface translate to natural isomorphisms of this functor. The gluing of surfaces translates to left exact coends.
 	 This explains why our definition is in line with the ones given in \cite{jfcs,dmf}.
 	 It is more general than the notion from \cite{turaev,baki} that additionally builds in semisimplicity, simplicity of the unit and a normalization axiom (these additional assumptions are not really topological in the sense that they are not part of the modular operad $\surfc$; 
 	 we comment very briefly on how to include them in Corollary~\ref{corrcategory} below). 
 	 On the topological side, i.e.\ as far as the definition of $\surfc$ is concerned, Definition~\ref{defmodularfunctor}
 	 is also essentially in line with \cite{tillmann}. Here, however, the key difference is that in \cite{tillmann} a different target category of linear categories is considered. This choice ultimately leads again to semisimplicity.

	By definition a modular functor has an underlying category $\cat{C}\in\Fin$.
	The structure of a modular functor endows this value on the circle with more structure about which we can make, using our previous results, the following statement:
	
	\begin{theorem}\label{thmcirclesectormf}
		The category obtained by evaluation  of a $\Fin$-valued modular functor 
		on the circle
		naturally comes with a ribbon Grothendieck-Verdier structure.
		\end{theorem}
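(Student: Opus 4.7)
The proof will proceed by combining the characterization of cyclic $\RBr$-algebras in $\Fin$ (Theorem~\ref{thmcycle2} together with Corollary~\ref{coralgrbr}) with the chain of cyclic operad equivalences relating $\RBr$ to surfaces of genus zero. Concretely, given a modular functor $Z$, which by definition is a modular $\surfc$-algebra on some $\cat{C} \in \Fin$ with non-degenerate symmetric pairing $\kappa$, I will first restrict along the inclusion $\ell : \Forests \to \Graphs$ and then further restrict to genus zero to obtain a cyclic $\surfc_0$-algebra structure on $(\cat{C}, \kappa)$. Here $\surfc_0$ denotes the sub-cyclic-operad of $\surfc|_\Forests$ spanned by genus zero surfaces; note that gluing genus zero surfaces along a forest yields a genus zero surface, so this is indeed a sub-cyclic-operad.

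The central step is the identification $\surfc_0 \simeq \surf_0$ as cyclic operads. For a compact oriented surface with boundary of genus zero, the intersection pairing on $H_1(\Sigma; \mathbb{Q})$ vanishes identically, which makes every subspace isotropic and, in particular, trivializes the Maslov cocycle of any three maximal isotropic subspaces. Consequently, $H_1(\Sigma; \mathbb{Q})$ itself provides a canonical maximal isotropic subspace functorially across all genus zero surfaces, and these canonical choices assemble into a section of the projection $\surfc_0 \to \surf_0$ that is an equivalence of cyclic operads.

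Combining this equivalence with $\surf_0 \simeq \RBr$ from Lemma~\ref{lemmaequivcyclicoperadsrbretc} and applying the Comparison Theorem~\ref{comparisonthm}, I transfer the cyclic $\surfc_0$-algebra structure on $(\cat{C}, \kappa)$ to an equivalent cyclic $\RBr$-algebra structure carrying the same pairing. By Theorem~\ref{thmcycle2} and Corollary~\ref{coralgrbr}, such an algebra is precisely a balanced braided Grothendieck-Verdier category in $\Fin$. The main obstacle will be the careful verification of the equivalence $\surfc_0 \simeq \surf_0$ at the level of symmetric monoidal functors on $\Forests$, ensuring that the canonical section is compatible with both the operadic composition (which reduces to the vanishing of the Maslov cocycle sketched above) and with the symmetric monoidal structure, and then tracking how the pairing $\kappa$ from the original modular algebra threads through the identifications to provide the self-duality underlying the resulting Grothendieck-Verdier structure.
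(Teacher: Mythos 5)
Your overall strategy---restrict the $\surfc$-algebra to a genus-zero cyclic suboperad, identify that suboperad with $\RBr$, and invoke Theorem~\ref{thmcycle2}/Corollary~\ref{coralgrbr}---is the same as the paper's, and your key topological observation is sound: for a genus-zero surface with boundary the intersection form on $H_1(\Sigma;\mathbb{Q})$ vanishes, so all Maslov indices vanish and one gets a canonical weight-zero lift. But your central claim that this yields an \emph{equivalence} $\surfc_0\simeq\surf_0$ of cyclic operads is false. By definition a morphism in $\surfc$ is a pair $(\phi,n)$ with an arbitrary weight $n\in\Z$, and the composition rule only modifies the addition of weights by a Maslov index; the central $\Z$ does not disappear when the cocycle is trivial. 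So even in genus zero the automorphism group of an object of $\surfc_0(T_n)$ is $\Map(\Sigma_{0,n+1})\times\Z$, the projection $\surfc_0\to\surf_0$ is not faithful, and your section $\phi\mapsto(\phi,0)$ is not full. Hence the Comparison Theorem~\ref{comparisonthm} cannot be applied to transfer algebras across this map, and the step as written fails.

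The gap is, however, easily repaired, because for the direction you need you do not want an equivalence at all: to produce a cyclic $\RBr$-algebra it suffices to have a \emph{morphism} of cyclic operads into $\surfc_0$ and to pull the modular functor back along it by precomposition. Your weight-zero section $s\colon\surf_0\to\surfc_0$ is such a morphism (it is a functor exactly because the Maslov cocycle vanishes in genus zero), and composing it with the equivalence $\RBr\simeq\surf_0$ of Lemma~\ref{lemmaequivcyclicoperadsrbretc} gives the required restriction; Theorem~\ref{thmcycle2} then finishes the argument, with the pairing $\kappa$ carried along unchanged since pullback of algebras does not touch the underlying object and pairing. This repaired argument is essentially the paper's proof: there the restriction is taken along the lift $\hbdy_0\to\surfc_0$ obtained from Lemma~\ref{lemmainclhbdyc} (handlebodies, with Lagrangian $\ker(H_1(\partial H;\mathbb{Q})\to H_1(H;\mathbb{Q}))$ and weight zero, functorial because $\mu(\lambda,\lambda,\lambda)=0$), and $\hbdy_0\simeq\surf_0$, so the two splittings agree up to equivalence; your version merely phrases the splitting on genus-zero surfaces rather than on handlebodies.
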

	
	\begin{proof}
		Any modular functor is by definition a $\surfc$-algebra and hence can be restricted along the map $\hbdy_0\to \surfc_0$ obtained from Lemma~\ref{lemmainclhbdyc} after restriction to genus zero. This proves that its evaluation on the circle comes with the structure of a cyclic $\hbdy_0$-algebra. 
			By  Lemma~\ref{lemmaequivcyclicoperadsrbretc} this is a cyclic $\RBr$-structure which amounts to a  
			ribbon Grothendieck-Verdier structure by Theorem~\ref{thmcycle2}.
		\end{proof}

	When imposing stronger assumptions on the value of a modular functor on the circle (either directly or indirectly by choice of a different target category), we recover the following result which --- in a slightly different language --- is part of \cite[Section~3]{tillmann} and \cite[Theorem~5.7.10]{baki}:

	\begin{corollary}\label{corrcategory}
		Consider a $\Fin$-valued modular functor whose evaluation $\cat{C}$
		on the circle
		 is semisimple and has a simple monoidal unit.
		Under these assumptions, the ribbon Grothendieck-Verdier structure on $\cat{C}$ from Theorem~\ref{thmcirclesectormf}
		 is a ribbon r-structure if and only if
		the modular functor is normalized  in the sense  
		 that its value on the sphere is a one-dimensional vector space. 
		\end{corollary}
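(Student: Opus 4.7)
The plan is to compute the vector space $F(S^2)$ that the modular functor $F$ assigns to the 2-sphere, and show that it is canonically isomorphic to $\cat{C}(K,I)$, where $K$ is the dualizing object of the Grothendieck-Verdier structure on $\cat{C}$ coming from Theorem~\ref{thmcirclesectormf}. Under the standing assumptions, $K = DI$ is simple (the anti-equivalence $D$ sends the simple unit to a simple object in the semisimple category $\cat{C}$), so $\dim_\C \cat{C}(K,I) \in \{0,1\}$, with value $1$ if and only if $K \cong I$. Since $K \cong I$ is precisely the defining condition for $\cat{C}$ to be an r-category, this dichotomy will establish the desired equivalence.

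The first step is to realize the 2-sphere as a gluing of two disks along their common boundary circle. Let $D \in \surfc(T_0)$ denote the disk operation, and let $T'$ denote the corolla with zero legs in $\Graphs$. The morphism $\Gamma \colon T_0 \sqcup T_0 \to T'$ in $\Graphs$ that connects the two legs of $T_0 \sqcup T_0$ into a single internal edge satisfies $\surfc(\Gamma)(D,D) \cong S^2$. Applying the Excision Theorem~\ref{thmexcision} to the modular $\surfc$-algebra $F$ then yields a canonical isomorphism
\[
F(S^2) \;\cong\; \oint^{X\in\cat{C}} F_D(X^\kappa)\otimes F_D(X),
\]
where $F_D \colon \cat{C} \to \FinVect$ is the left exact functor assigned by $F$ to the disk.

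The second step is to identify $F_D$ in terms of the Grothendieck-Verdier structure. Composing the equivalences of Lemma~\ref{lemmaequivcyclicoperadsrbretc} ($\surf_0 \simeq \hbdy_0 \simeq \RBr$) with Theorem~\ref{thmcycle2}, the disk $D$ corresponds to the unique arity-zero operation of the cyclic operad $\RBr$. Unwinding the definition of the cyclic endomorphism operad, the image of this operation in $\End_\kappa^\cat{C}(T_0) = \Fin[\cat{C},\FinVect]$ is the ``trace'' functor $\kappa(I,-) \cong \cat{C}(K,-)$, giving $F_D(X) \cong \cat{C}(K,X)$. Combining the Grothendieck-Verdier isomorphism $\cat{C}(K,Y\otimes Z) \cong \cat{C}(DY,Z)$ with $D^2 \cong \id_\cat{C}$ from Remark~\ref{Rem:D^2=1}, one obtains $F_D(X^\kappa) \cong \cat{C}(K,DX) \cong \cat{C}(X,I)$.

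Substituting these identifications into the excision formula and invoking the (left exact) co-Yoneda lemma collapses the coend to
\[
F(S^2) \;\cong\; \oint^{X\in\cat{C}} \cat{C}(X,I)\otimes \cat{C}(K,X) \;\cong\; \cat{C}(K,I),
\]
which together with the dimension count from the first paragraph completes the proof. The main obstacle I anticipate is the identification of $F_D$ with $\cat{C}(K,-)$: this requires carefully tracing through the equivalence of Theorem~\ref{thmcycle2} to confirm that the arity-zero cyclic operation (topologically, the disk) is sent to the ``trace'' $\kappa(I,-)$ of the Grothendieck-Verdier pairing. Once this is in place, the appeal to excision and co-Yoneda is routine.
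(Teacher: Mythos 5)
Your proposal is correct and follows essentially the same route as the paper: the paper identifies the value of the modular functor on the sphere with $\cat{C}(K,I)$ by invoking Theorem~\ref{thmmainbalancedbraidedcalc1} (whose proof is the same excision argument that you carry out by hand for the gluing of two disks, together with the identification of the disk value with $\kappa(I,-)\cong\cat{C}(K,-)$), and then concludes exactly as you do. The concluding dichotomy --- $K=DI$ is simple because $D$ is an anti-equivalence and $I$ is simple, so by semisimplicity $\dim\cat{C}(K,I)=1$ if and only if $K\cong I$, i.e.\ if and only if the structure is an r-structure --- coincides with the paper's proof.
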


	\begin{proof}
		By Theorem~\ref{thmmainbalancedbraidedcalc1} we know that the vector space that the modular functor assigns to the sphere is the morphism space $\cat{C}(K,I)$.
		Hence, it remains to prove
		\begin{align}
		\dim \cat{C}(K,I) = 1 \quad \Longleftrightarrow\quad K\cong I \ .  \label{eqnhomKI}
		\end{align}
	To this end, recall that $K$ is the image of $I$ under the duality functor $D$, which is an anti-equivalence. Since $I$ is simple, so is $K$. By semisimplicity of $\cat{C}$, we have
	\begin{align}
	\cat{C}(K,I)\cong \left\{ \begin{array}{cl} k \id_I \ , & \text{if}\ K\cong I \ , \\ 0 \ , & \text{else} \ .\end{array} \right.
	\end{align}
	This implies \eqref{eqnhomKI}.
		\end{proof}
	
	We end the subsection by giving the proof of Proposition~\ref{propcomplyu} that we still owe:
	
		\begin{proof}[\textsl{Proof of Proposition~\ref{propcomplyu}}]\label{hereistheproofforproplyu}
		By Lyubashenko's construction the modular category $\cat{C}$ gives rise to a modular functor, i.e.\ a modular $\Fin$-valued algebra over $\surfc$ that we denote by $\cat{C}^\text{Lyu}$;
		this is essentially a reformulation of \cite{lyubacmp,lyu,lyulex} using a different language. 
		
		Consider now the map $h: \Envint \hbdyg_0 \to \hbdyg\subset \hbdy \to \surfc$ of modular operads. We will now compare the pullback $h^* \cat{C}^\text{Lyu}$ with the modular $\Envint\hbdyg_0$-algebra $\widehat{\cat{C}^\text{a}}$ featuring in Theorem~\ref{thmmainbalancedbraidedcalc1}. Once we prove
		\begin{align}\widehat{\cat{C}^\text{a}}\simeq h^* \cat{C}^\text{Lyu}\label{eqncomparison} \end{align} we obtain by means of the Theorems~\ref{thmmainbalancedbraidedcalc1} and \ref{thmmainbalancedbraidedcalc2} immediately the desired statement if $(g,n)\neq (0,0),(0,1),(1,0)$. In the case $(g,n)=(0,0),(0,1)$, the statement can be easily verified directly. In the case, $(g,n)=(1,0)$, it follows from the fact that by $h^* \cat{C}^\text{Lyu}$ by construction factors through $\hbdy$. Then by \eqref{eqncomparison} the same is true for $\widehat{\cat{C}^\text{a}}$.
		
		Hence, it remains to prove \eqref{eqncomparison}: To this end, we use
		$h^* \cat{C}^\text{Lyu}=\widehat{\left(h^*\cat{C}^\text{Lyu}\right)_0}$ which follows from
		Remark~\ref{remmodext}. Here $\left(h^*\cat{C}^\text{Lyu}\right)_0$ denotes 
		 the restriction of $h^*\cat{C}^\text{Lyu}$ to a cyclic algebra.
		Note that Remark~\ref{remmodext} uses that $h^*\cat{C}^\text{Lyu}$ inverts all morphisms in the categories of operations (which is the case because $h^*\cat{C}^\text{Lyu}$ comes from a modular algebra over $\hbdyg$ which is groupoid-valued).
		 Thanks to  $(h^*\cat{C}^\text{Lyu})_0 = (\cat{C}^\text{Lyu})_0$ (this holds by definition), \eqref{eqncomparison} now reads $\widehat{\cat{C}^\text{a}}\simeq \widehat{\left(\cat{C}^\text{Lyu}\right)_0}$ and hence can be reduced to the equivalence $\cat{C}^\text{a}\simeq \left(\cat{C}^\text{Lyu}\right)_0$ of cyclic $\hbdyg_0$-algebras. This equivalence can be seen as follows:
		Clearly, both assign the same left exact functors to genus zero handlebodies. In fact, they also assign the same isomorphisms of such functors to isomorphisms of genus zero handlebodies (phrased equivalently, they agree on the mapping class groups of genus zero surfaces --- which are ribbon braid groups). Explicitly, the braiding generators act through the braiding of $\cat{C}$, and the Dehn twist around a boundary component acts by the balancing. This description is tautologically true for $\cat{C}$ as a cyclic $\RBr$-algebra (and therefore for $\cat{C}^\text{a}$). In the Lyubashenko construction, i.e.\ for $\cat{C}^\text{Lyu}$ and hence for $\left(\cat{C}^\text{Lyu}\right)_0$, it holds by definition, see  \cite{lyubacmp,lyu,lyulex} or the Lego-Teichmüller version of the construction in \cite[Section~2\&3]{jfcs}.
	\end{proof}

		\small

	\end{document}